\documentclass[12pt]{amsart}
\usepackage{tikz-cd}
\usepackage{enumitem}
\setlist[description]{leftmargin=\parindent,labelindent=\parindent}
\usepackage{amssymb}
\usepackage{amsmath,amscd}
\usepackage{mathrsfs}
\usepackage{url}
\usepackage{mathtools}
\usepackage{cite}
\usepackage{fullpage}
\usepackage{hyperref}
\usepackage{verbatim}
\usepackage{comment}
\usepackage{fancyvrb}
\usepackage{fvextra}
\usepackage{caption}
\usepackage{tikz}
\usepackage{tkz-graph}
\usetikzlibrary{matrix}
 \usetikzlibrary{shapes}
\usetikzlibrary{arrows,decorations.markings}
\usepackage{tikz-cd}
\usepackage{stmaryrd}

\pgfarrowsdeclare{bad to}{bad to}
{
  \pgfarrowsleftextend{-2\pgflinewidth}
  \pgfarrowsrightextend{\pgflinewidth}
}
{
  \pgfsetlinewidth{0.8\pgflinewidth}
  \pgfsetdash{}{0pt}
  \pgfsetroundcap
  \pgfsetroundjoin
  \pgfpathmoveto{\pgfpoint{-3\pgflinewidth}{4\pgflinewidth}}
  \pgfpathcurveto
  {\pgfpoint{-2.75\pgflinewidth}{2.5\pgflinewidth}}
  {\pgfpoint{0pt}{0.25\pgflinewidth}}
  {\pgfpoint{0.75\pgflinewidth}{0pt}}
  \pgfpathcurveto
  {\pgfpoint{0pt}{-0.25\pgflinewidth}}
  {\pgfpoint{-2.75\pgflinewidth}{-2.5\pgflinewidth}}
  {\pgfpoint{-3\pgflinewidth}{-4\pgflinewidth}}
  \pgfusepathqstroke
}

\newtheorem{thm}{Theorem}[section]
\newtheorem{prop}[thm]{Proposition}
\newtheorem{lem}[thm]{Lemma}
\newtheorem{cor}[thm]{Corollary}

\theoremstyle{definition}

\newtheorem{rem}[thm]{Remark}

\numberwithin{equation}{section}

\renewcommand{\L}{\mathcal{L}}

\newcommand{\X}{\mathcal{X}}

\newcommand{\V}{\mathcal{V}}
\newcommand{\J}{\mathcal{J}}
\newcommand{\N}{\mathcal{N}}
\newcommand{\B}{\mathcal{B}}
\newcommand{\Z}{\mathcal{Z}}

\newcommand{\zz}{\mathbb{Z}}

\newcommand{\qq}{\mathbb{Q}}

\newcommand{\C}{\mathcal{C}}
\newcommand{\W}{\mathcal{W}}

\newcommand{\pp}{\mathbb{P}}
\renewcommand{\H}{\mathcal{H}}

\newcommand{\F}{\mathcal{F}}
\renewcommand{\P}{\mathcal{P}}

\newcommand{\E}{\mathcal{E}}
\renewcommand{\O}{\mathcal{O}}

\renewcommand{\tilde}{\widetilde}

\DeclareMathOperator{\SL}{SL}
\DeclareMathOperator{\Pic}{Pic}

\DeclareMathOperator{\Sym}{Sym}

\DeclareMathOperator{\PGL}{PGL}

\DeclareMathOperator{\BSL}{BSL}
\DeclareMathOperator{\BPGL}{BPGL}
\DeclareMathOperator{\BGL}{BGL}

\renewcommand{\gg}{\mathbb{G}}

\newcommand{\sV}{\mathscr{V}}
\newcommand{\sB}{\mathscr{B}}
\newcommand{\sP}{\mathscr{P}}
\newcommand{\sJ}{\mathscr{J}}
\newcommand{\sM}{\mathscr{M}}
\newcommand{\sC}{\mathscr{C}}
\newcommand{\sL}{\mathscr{L}}
\newcommand{\sH}{\mathscr{H}}

\usepackage{wrapfig}

\title{The Chow ring of the universal Picard stack over the hyperelliptic locus}
\author{Hannah Larson}

\begin{document}

\maketitle

\begin{abstract}
Let $\sJ^d_g \to \sM_g$ be the universal Picard stack parametrizing degree $d$ line bundles on genus $g$ curves, and let $\sJ^d_{2,g}$ be its restriction to locus of hyperelliptic curves $\sH_{2,g} \subset \sM_g$. We determine the rational Chow ring of $\sJ^d_{2,g}$ for all $d$ and $g$. In particular, we prove it is generated by restrictions of tautological classes on $\sJ^d_g$ and we determine all relations among the restrictions of such classes. We also compute the integral Picard group of $\sJ^d_{2,g}$, completing (and extending to the $\PGL_2$-equivariant case) prior work of Erman and Wood.
As a corollary, we prove that $\sJ^d_{2,g}$ is either a trivial $\gg_m$-gerbe over its rigidification, or has Brauer class of order $2$, depending on the parity of $d - g$.
\end{abstract}

\section{Introduction}
Understanding the line bundles on curves is essential to understanding the curves themselves. As such, the universal Picard stack
plays a prominent role in the intersection theory of moduli of curves, recently for example in connection with the double ramification cycle \cite{DR}.
Like the moduli space of curves, the universal Picard stack 
has a distinguished collection of classes in its Chow ring called \emph{tautological classes}.
We briefly review their construction, which generalizes that of kappa classes on the moduli space of curves.

Let $\sM_g$ be the moduli space of genus $g$ curves, and let $\sJ^d_g \to \sM_g$ be the universal Picard stack parameterizing degree $d$ line bundles on genus $g$ curves.
We write
$f:\sC \to \sJ^d_g$ for the pullback of the universal curve from $\sM_g$. Then there are two natural line bundles on $\sC$: the relative dualizing sheaf $\omega_f$, and the universal degree $d$ line bundle, which we call $\sL$. From these we obtain the \emph{twisted kappa classes}
\[\kappa_{i,j} := f_*(c_1(\omega_f)^{i+1} \cdot c_1(\sL)^j) \in A^{i + j}(\sJ^d_{g}) \qquad \text{for $i \geq -1, j \geq 0$}.\]
Note that $\kappa_{i,0}$ is the pullback of $\kappa_i$ from $\sM_g$.

In analogy with the Madsen--Weiss theorem on the stable cohomology of $\sM_g$ \cite{MadsenWeiss}, it is known that the 
the stable cohomology of $\sJ^0_g$ as $g$ tends to infinity is freely generated by the twisted kappa class \cite{CohenMadsen,EW}. The relations among twisted kappa classes for fixed $g$ were studied in \cite{BL}, which discovered a family of ``Faber--Zagier type" relations.
However, despite much interest in $\sJ^d_g$ and its tautological subring, there are no cases (beyond genus $0$ or $1$) where either the tautological subring or the full Chow ring of $\sJ^d_{g}$ is known.

Over the past several years, the Chow ring of $\sM_g$ has been determined for $g \leq 9$ \cite{FaberI,FaberII,Izadi,PenevVakil,789}. Each of these works sets up the problem considering the stratification of $\sM_g$ by gonality. The first step of such calculations is to understand the smallest stratum, namely the hyperelliptic locus $\sH_{2,g} \subset \sM_g$.  For $\sM_g$, this smallest stratum is particularly nice:
The fundamental class of $\sH_{2,g}$ generates the socle in the tautological ring of $\sM_g$, and, with rational coefficients, there are no other classes supported on it.

One might hope to similarly approach the intersection theory of $\sJ^d_g$ via the gonality stratification. The first step here is to study the restriction of $\sJ^d_g \to \sM_g$ to the hyperelliptic locus, which we call $\sJ^d_{2,g} \to \sH_{2,g}$. Our main theorem determines the rational Chow ring of $\sJ^d_{2,g}$ for all $d$ and $g$. 
In contrast with $\sH_{2,g}$, we find that $\sJ^d_{2,g}$ already supports many interesting classes.  
As a special case, this provides the first non-trivial complete calculation of any $A^*(\sJ^d_g)$ in the case $g = 2$.
Below, by slight abuse of notation, we write $\kappa_{i,j}$ for the restriction of $\kappa_{i,j}$ along the inclusion $\sJ^d_{2,g} \subset \sJ^d_g$.

\begin{thm} \label{intro}
Given an integer $g \geq 2$ and any integer $d$, the rational Chow ring of $\sJ^d_{2,g}$ is
\[A^*(\sJ^d_{2,g}) = \frac{\qq[\kappa_{0,1}, \kappa_{-1,2}]}{\langle (d\kappa_{0,1} - (g-1)\kappa_{-1,2})^{g+1}\rangle}.\]
\end{thm}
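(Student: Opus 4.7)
The strategy is to present $\sJ^d_{2,g}$ as an explicit quotient stack and compute its rational Chow ring via equivariant intersection theory. Every hyperelliptic curve is canonically a double cover $\pi \colon C \to \pp^1$, so the universal hyperelliptic curve factors as $\sC \to \sP \to \sH_{2,g}$ with $\sP$ a $\pp^1$-bundle and $\sC \to \sP$ a degree-$2$ cover branched along a relative divisor of degree $2g+2$. Building on quotient presentations of $\sH_{2,g}$ due to Arsie--Vistoli and Edidin--Fulghesu, and on Erman--Wood's presentation of $\sJ^d_{2,g}$, I would realize $\sJ^d_{2,g}$ as $[U/G]$ for $U$ an open subscheme of a representation of a linear algebraic group $G$. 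In this presentation, $\kappa_{0,1}$ and $\kappa_{-1,2}$ become specific Chern classes in $A^*_G(U)$, identifiable via Grothendieck--Riemann--Roch and the Deligne pairing, which give $\kappa_{-1,2} = c_1\langle \sL, \sL \rangle$ and $\kappa_{0,1} = c_1\langle \sL, \omega_f \rangle$.

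A complementary geometric input is Abel--Jacobi: for $d \geq 2g-1$, $\sC^{(d)} := \Sym^d(\sC/\sH_{2,g}) \to \sJ^d_{2,g}$ is the projective bundle $\pp(f_* \sL)$, so the projective bundle formula directly relates their Chow rings. Since twisting by the universal $g^1_2$ yields an isomorphism $\sJ^d_{2,g} \cong \sJ^{d+2}_{2,g}$, it suffices to treat one large $d$ in each parity class. The stack $\sC^{(d)}$ is itself more tractable, sitting as a branched cover of $\Sym^d(\sP/\sH_{2,g})$, a symmetric power of a $\pp^1$-bundle.

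The relation $(d\kappa_{0,1} - (g-1)\kappa_{-1,2})^{g+1} = 0$ has a transparent fiberwise origin. On any Jacobian fiber $\Pic^d(C)$, $\kappa_{0,1}$ vanishes while $\kappa_{-1,2}$ is a nonzero rational multiple of the theta class $\Theta$; hence $d\kappa_{0,1} - (g-1)\kappa_{-1,2}$ restricts to a nonzero multiple of $\Theta$, whose $(g+1)$-power vanishes because $\dim \Pic^d(C) = g$. To upgrade this to a relation on the total stack, and simultaneously to show that higher $\kappa_{i,j}$ reduce to polynomials in $\kappa_{0,1}$ and $\kappa_{-1,2}$---in particular that $\kappa_{1,0}$ (and hence the Hodge class $\lambda$) is a linear combination of the two---I would exploit the hyperelliptic identity $K_C = g \cdot g^1_2$, globalized to $\omega_f \cong \mathscr{D}^{\otimes g} \otimes f^* N$ for the universal $g^1_2$ bundle $\mathscr{D}$ and some $N \in \Pic(\sH_{2,g})$, combined with Grothendieck--Riemann--Roch applied to $f_*(\sL^m \omega_f^n)$ for various $m$ and $n$.

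The main obstacle, and the technical heart of the argument, is completeness: showing both that $\kappa_{0,1}$ and $\kappa_{-1,2}$ generate all of $A^*(\sJ^d_{2,g})$ and that $(d\kappa_{0,1} - (g-1)\kappa_{-1,2})^{g+1}$ generates the entire ideal of relations. This is exactly what the quotient presentation is designed to deliver: the ambient equivariant Chow ring $A^*_G(V)$ provides the generators (once the $\gg_m$-gerbe structure from the rigidification is absorbed), while the excision sequence along the discriminant complement $V \setminus U$ is meant to produce precisely the expected degree-$(g+1)$ relation, with no extraneous ones.
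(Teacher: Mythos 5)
Your overall framework---realize $\sJ^d_{2,g}$ via the Erman--Wood presentation as an open substack of a (total space of a) vector bundle over a quotient stack, get generators from the ambient equivariant Chow ring, and get relations from excision---is the same skeleton the paper uses. But the two steps you defer are exactly where the content lies, and the reasons you give for why they will work are not correct. First, the ``transparent fiberwise origin'' of the relation is not a proof: the fact that $d\kappa_{0,1}-(g-1)\kappa_{-1,2}$ restricts on each Jacobian fiber to a multiple of $\Theta$, and that $\Theta^{g+1}=0$ on a $g$-dimensional fiber, only shows the class dies under restriction to fibers. Restriction to a fiber is far from injective on $A^{g+1}$ of a $(3g-1)$-dimensional stack, and for $d\neq 0$ there is not even an abelian-scheme structure with which to run a Beauville--Deninger--Murre weight argument (the paper invokes that decomposition only in a remark, only for $d=0$). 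In the actual proof the codimension-$(g+1)$ relation comes from the fundamental class of a universal splitting locus: for $d$ odd it is the class of the empty splitting locus $(\lceil\frac{d+1}{2}\rceil,\lfloor\frac{d+1}{2}\rfloor-g-2)$ bounding the allowed splitting types, and for $d$ even it is the pushforward of a codimension-$1$ class supported on the codimension-$g$ locus where $\alpha_*L$ is maximally unbalanced; in both cases one needs the inductive computation of $[\overline{\Sigma}_{i,j}]$ and the observation that it becomes a perfect power of $(e a_1 - 2a_2')$ after specializing $c_2=0$, $a_2=\tfrac14 a_1^2$.

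Second, your statement that the excision along the discriminant ``is meant to produce precisely the expected degree-$(g+1)$ relation, with no extraneous ones'' is an assertion of the theorem, not an argument, and it also mislocates the relation: the discriminant excision produces the codimension-$1$ relations ($b_1$, $c_2$, $a_1^2-4a_2$ in the paper's coordinates), not the degree-$(g+1)$ one. Controlling the image of $A_{*}(\Delta)\to A_{*}(\mathcal{X})$ requires constructing the universal singular point via relative bundles of principal parts, and when $d$ is even the evaluation map to principal parts drops rank along the directrix of the most unbalanced Hirzebruch surfaces, so $\widetilde{\Delta}$ is not a vector bundle over the universal surface and one must stratify it and track an excess-intersection contribution. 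You would also need to verify there are no relations in degrees $2$ through $g$, which rests on the freeness of the Chow ring of the ambient stack of pairs of bundles on $\pp^1$; your proposal does not address this. The Abel--Jacobi/symmetric-power route you sketch for large $d$ is genuinely different from the paper and could conceivably be developed, but as written it replaces one uncomputed Chow ring with another ($\Sym^d$ of the universal curve over $\sH_{2,g}$) and is not carried out. The identifications of $\kappa_{0,1}$, $\kappa_{-1,2}$ via Deligne pairings and the use of $K_C = g\cdot g^1_2$ to kill $\kappa_{1,0}$ are fine in outline and consistent with what the paper does.
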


We remark that $\sJ^d_{2,g}$ is a $\gg_m$-gerbe over its rigidification $J^d_{2,g}$. This explains why its Chow ring possesses
non-trivial classes in arbitrarily high codimension.
As we shall explain in Section \ref{rigid}, the Chow ring of $J^d_{2,g}$ is a subring living in finitely many degrees:
\[A^*(J^d_{2,g}) = \qq[u]/(u^{g+1}) \hookrightarrow A^*(\sJ^d_{2,g}) \qquad \text{given by} \qquad  u \mapsto d\kappa_{0,1} - (g - 1)\kappa_{-1,2}.\]
This also gives us a distinguished description of the codimension $1$ class whose $(g+1)$st power vanishes.
When $d = g - 1$, this distinguished class is a multiple of the class of the theta divisor (see Section \ref{theta}).

As a corollary, we determine all relations among the restrictions of tautological classes to the hyperelliptic locus. The shape of these relations may be of independent interest since, for example, the ideal they generate must contain the ideal of tautological relations on $\sJ^d_g$.

\begin{cor} \label{relscor}
Let $g \geq 2$ and let $d$ be any integer. The relations among the restrictions of tautological classes to $\sJ^d_{2,g}$  are generated by the following:
\begin{align*} \kappa_{i,j} &= 0 \qquad \text{for all $i \geq 1, j \geq 0$} \\
\kappa_{0,j} &= \frac{1}{(2g - 2)^{j-1}} (\kappa_{0,1})^j \\
\kappa_{-1,j}
&= \frac{1}{(2g-2)^{j-1}} \cdot(\kappa_{0,1})^{j-2}\cdot \left( (g-1)(j^2 - j) \cdot \kappa_{-1,2} - d(j^2 - 2j) \cdot \kappa_{0,1}\right) \\
0 &= (d\kappa_{0,1} - (g - 1)\kappa_{-1,2})^{g+1}.
\end{align*}
\end{cor}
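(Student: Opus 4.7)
Given Theorem \ref{intro}, the sufficiency of the listed relations is automatic. The first three families express every $\kappa_{i,j}$ as a polynomial in the two generators $\kappa_{0,1}$ and $\kappa_{-1,2}$, so any polynomial relation among tautological classes rewrites as a polynomial relation between those generators alone. By Theorem \ref{intro}, such a relation must lie in the ideal generated by $(d\kappa_{0,1} - (g-1)\kappa_{-1,2})^{g+1}$, which is the fourth listed relation. So my task reduces to verifying that the first three families of identities hold in $A^*(\sJ^d_{2,g})$.

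To verify the formulas, I would compute each $\kappa_{i,j}$ directly using the hyperelliptic double cover. Factor $f \colon \sC \to \sJ^d_{2,g}$ as $\sC \xrightarrow{\pi} \sP \xrightarrow{p} \sJ^d_{2,g}$, where $p$ is the $\pp^1$-bundle obtained as the quotient of $\sC$ by the hyperelliptic involution and $\pi$ is the degree two cover, with ramification divisor $R$ and branch divisor $B$. Riemann--Hurwitz gives $c_1(\omega_f) = \pi^*c_1(\omega_p) + [R]$, and rationally $[R] = \pi^*[B]/2$, so $[R]^k = \pi^*([B]^k)/2^k$ for $k \geq 1$. Decomposing $c_1(\sL) = \xi + \eta$ into invariant and anti-invariant parts under the hyperelliptic involution (with $\xi = \pi^*\gamma$ for some $\gamma \in A^1(\sP)$, since the invariants in $A^*(\sC)_\qq$ are exactly $\pi^*A^*(\sP)_\qq$), the anti-invariant $\eta$ satisfies $\eta^2 \in \pi^*A^2(\sP)_\qq$ and is killed by $\pi_*$ in odd powers. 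Expanding $c_1(\omega_f)^{i+1}c_1(\sL)^j$ and pushing forward successively through $\pi_*$ (using $\pi_*\pi^*\alpha = 2\alpha$ and $\pi_*[R] = [B]$) and $p_*$ (using the $\pp^1$-bundle relation in $A^*(\sP)$ over $A^*(\sJ^d_{2,g})$) then expresses each $\kappa_{i,j}$ explicitly. The vanishing $\kappa_{i,j} = 0$ for $i \geq 1$ should reflect the fact that high powers of $c_1(\omega_p)$ on a $\pp^1$-bundle project trivially to the base, while the polynomial identities for $\kappa_{0,j}$ and $\kappa_{-1,j}$ should follow by direct induction on $j$.

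The main obstacle is bookkeeping: one must identify the auxiliary classes (the Chern classes of the rank two bundle defining $\sP$, the class $[B]$ on $\sP$, and the invariant and anti-invariant parts of $c_1(\sL)$) in terms of $\kappa_{0,1}$ and $\kappa_{-1,2}$. This dictionary should be extractable from the construction used to prove Theorem \ref{intro}: running the above procedure on the simplest cases $\kappa_{0,1}$, $\kappa_{-1,1}$, and $\kappa_{-1,2}$ yields linear relations between these kappa classes and the auxiliary classes, and inverting them pins the dictionary down. Once it is in place, the three families of identities follow by direct expansion.
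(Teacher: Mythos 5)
Your reduction is exactly the paper's: granting Theorem \ref{intro}, the first three families rewrite every $\kappa_{i,j}$ as a polynomial in $\kappa_{0,1}$ and $\kappa_{-1,2}$, and the completeness of the list then follows, so the real content is the verification of those three families. Where you differ is in how you compute the $\kappa_{i,j}$. The paper never decomposes $A^*(\sC)_{\qq}$ into eigenspaces of the hyperelliptic involution; instead it uses the embedding $\iota$ of the universal curve into the universal Hirzebruch surface $\pp\E^\vee$ (with $\E = \alpha_*\sL$), where $c_1(\sL) = \iota^*\zeta$ and $c_1(\omega_f) = (g-1)\iota^*z$, and computes $\kappa_{i,j} = \pi_*\gamma_*\bigl([\sC]\cdot (g-1)^{i+1}z^{i+1}\zeta^j\bigr)$ using the projective bundle relation; the recursions for $\kappa_{0,j}$ and $\kappa_{-1,j}$ are driven by the identity $(2\zeta - a_1)^2 \in \langle z\rangle$ after substituting $a_2 = \tfrac14 a_1^2$. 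Your route is a legitimate and closely parallel alternative: your anti-invariant class $\eta$ differs from the restriction of $\zeta - \tfrac12 c_1(\E)$ by a class pulled back from $\sP$, and the statement that $\eta^2$ lies in $\pi^*A^2(\sP)_{\qq}$ is the curve-level shadow of the paper's completing-the-square identity. Both approaches terminate in essentially the same inductions on $j$.

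The one place your stated justification is too weak is the vanishing $\kappa_{i,j} = 0$ for $i \geq 1$. On a $\pp^1$-bundle it is \emph{not} true that high powers of the relative cotangent class push forward to zero: $p_*(c_1(\omega_p)^2) = -4c_2$ of the underlying rank $2$ bundle. By Riemann--Hurwitz, $c_1(\omega_f) = \pi^*(c_1(\omega_p) + \tfrac12[B])$, and its square involves both $c_2$ and the square of the horizontal component of $[B]$. The vanishing therefore requires the genuine inputs $c_2 = 0$ and $b_1 = 0$ in $A^*(\sJ^d_{2,g})_{\qq}$, which the paper gets from the discriminant excision (or a priori from $A^*(\sH_{2,g})_{\qq} = \qq$). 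The same two vanishings, together with $a_2 = \tfrac14 a_1^2$, are also what make your dictionary well posed: the three linear relations you extract from $\kappa_{0,1}$, $\kappa_{-1,1}$, $\kappa_{-1,2}$ do not by themselves determine all the auxiliary classes (the horizontal part of $[B]$, $c_2$, and $\eta^2$). Since you explicitly plan to import the dictionary from the construction behind Theorem \ref{intro}, this is a matter of making those imports explicit rather than a flaw in the strategy, but as written the heuristic for the $i \geq 1$ vanishing would not survive direct expansion without them.
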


Perhaps most interesting is the codimension $g+1$ relation among
the codimension $1$ generators, which
also appears in the theorem statement. 
Our proof of this relation depends on the parity of $d$, but somewhat magically yields the same formula for all $d$. 
The geometric explanation of this relation --- and indeed the core of our approach more broadly --- relies on
the relationship between line bundles on hyperelliptic curves and rank $2$ vector bundles on $\pp^1$. Namely, 
given a line bundle $L$ on a hyperelliptic curve $\alpha: C \to \pp^1$, 
its pushforward $\alpha_*L$ is a rank $2$ vector bundle on $\pp^1$.
By the Grothendieck--Birkhoff theorem, this vector bundle splits as a direct sum of line bundles $\O_{\pp^1}(e_1) \oplus \O_{\pp^1}(e_2)$ with $e_1 \leq e_2$. We call the pair $(e_1, e_2)$ the \emph{splitting type of $L$}.
Using Riemann--Roch, one sees that 
\begin{equation} \label{rr} e_1 + e_2 + 2 = \chi(\alpha_*L) = \chi(L) = d - g + 1 \qquad \Rightarrow \qquad e_1 + e_2 = d - g - 1.
\end{equation}
In addition, the Brill--Noether theory of hyperelliptic curves tells us that $e_2 \leq \frac{d}{2}$, and equality holds if and only if $L = \alpha^*\O_{\pp^1}(\frac{d}{2})$.

When $d$ is odd, there are no line bundles of splitting type $(\lceil d/2 \rceil, \lfloor d/2 \rfloor - (g + 1))$. The formula for the class of this splitting locus gives rise to the codimension $g+1$ relation. 
Meanwhile, when $d$ is even, the splitting type $(d/2, d/2 - (g + 1))$ plays a special role. The line bundles with this splitting type are exactly those of the form $\alpha^*\O_{\pp^1}(\frac{d}{2})$. 
Our approach to studying $\sJ^d_{2,g}$ involves excising a discriminant locus from a universal linear system of curves on Hirzebruch surfaces.
It turns out that the map from the universal singular point to the universal Hirzebruch surface jumps fiber dimension precisely along the directrices within Hirzebruch surfaces of this most unbalanced splitting type $(d/2, d/2 - (g + 1))$. This gives rise to a codimension $g+1$ relation obtained as the pushforward of a certain codimension $1$ class living inside this codimension $g$ splitting locus.

\subsection{The integral Picard group}
The Picard group of $\sJ^d_g$ was found in \cite{MeloViviani,EW}.
Here, we study the Picard group of $\sJ^d_{2,g}$, completing an analysis begun by Erman and Wood in \cite{ErmanWood}.

Let $\H_{2,g}^{\dagger}$ be the Hurwitz space of 
degree $2$ covers of a fixed $\pp^1$. The map
$\H_{2,g}^{\dagger} \to \sH_{2,g}$ is a $\PGL_2$ bundle.
Erman and Wood considered the universal Picard stack $\J^{d,\dagger}_{2,g}$ over $\H_{2,g}^{\dagger}$ and 
computed its integral Picard group when $d - g$ is even \cite[Theorem 7.2]{ErmanWood}.
Erman and Wood make use of the stratification of $\J^{d,\dagger}_{2,g}$ by the splitting type of line bundles. 
Specifically, they find the Picard group of the largest piece in this stratification.
When $d -g$ is even, the complement of the largest piece in the stratification has codimension $2$, so its Picard group agrees with that of $\J^{d,\dagger}_{2,g}$.
However, when $d - g$ is odd, the complement of the largest piece has codimension $1$, so that studying piece alone is not enough to determine the Picard group.

The novel contribution in this paper is to leverage the Chow ring of the moduli stack of vector bundles on $\pp^1$ to ``put together" the intersection theory of all the pieces in this stratification. In addition to the main theorem,
this allows us to complete the missing case from \cite{ErmanWood} when $d - g$ is odd. The answer is the same as when $d - g$ is even.

\begin{thm} \label{pthm}
For $g \geq 2$ and any $d$, we have
\[\Pic(\J^{d,\dagger}_{2,g}) = \zz \oplus \zz \oplus \zz/(8g + 4).\]
\end{thm}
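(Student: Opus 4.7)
The plan is to exploit the pushforward $\alpha_\ast$ to present $\J^{d,\dagger}_{2,g}$ as a moduli space fibered over the Artin stack $\sV$ of rank two vector bundles on a fixed $\pp^1$ of degree $d-g-1$, and to stratify by splitting type so as to reduce the Picard computation to intersection theory over each stratum.

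First, I would make the setup precise: from $(C,\alpha,L)$ one recovers the rank two bundle $E=\alpha_\ast L$ on $\pp^1$, and conversely a triple $(C,\alpha,L)$ with $C$ smooth hyperelliptic is encoded by such an $E$ together with section data for the algebra/module structure and smoothness of the branch divisor. This realizes $\J^{d,\dagger}_{2,g}$ as the complement of a discriminant inside the total space of an explicit vector bundle over $\sV$. The stratification of $\sV$ by splitting types $\vec{e}=(e_1,e_2)$ with $e_1\le e_2$ and $e_1+e_2=d-g-1$ then induces a stratification of $\J^{d,\dagger}_{2,g}$, whose open stratum $\J^{d,\dagger,\circ}_{2,g}$ corresponds to the most balanced splitting.

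Second, I would separate cases by parity. When $d-g$ is even the most balanced splitting is $((d-g-2)/2,(d-g)/2)$ with $e_2-e_1=1$; all less balanced strata have $e_2-e_1\ge 3$ and codimension at least two (since the codimension of the splitting-type stratum equals $\max(0,e_2-e_1-1)$), so $\Pic(\J^{d,\dagger}_{2,g})=\Pic(\J^{d,\dagger,\circ}_{2,g})$, which Erman and Wood compute to be $\zz\oplus\zz\oplus\zz/(8g+4)$. When $d-g$ is odd, the most balanced splitting is $((d-g-1)/2,(d-g-1)/2)$ and the next stratum $\vec{e}'=((d-g-3)/2,(d-g+1)/2)$ has codimension one; the Erman--Wood-style analysis of the open stratum now yields a Picard group with one extra generator, but the excision sequence
\[\zz\cdot\bigl[\J^{d,\dagger,\vec{e}'}_{2,g}\bigr]\to\Pic\bigl(\J^{d,\dagger}_{2,g}\bigr)\to\Pic\bigl(\J^{d,\dagger,\circ}_{2,g}\bigr)\to 0\]
imposes exactly one relation coming from the codimension-one boundary.

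The main obstacle is to compute the class $[\J^{d,\dagger,\vec{e}'}_{2,g}]$ explicitly and verify that it cuts the extra generator down in precisely the right way. I would compute this class as the pullback from $\sV$ of the fundamental class of the $\vec{e}'$-stratum, which is a Thom--Porteous-type degeneracy class expressible via Chern classes of the universal rank two bundle on $\sV\times\pp^1$. Translating back via Grothendieck--Riemann--Roch into the standard divisor basis (for instance $\kappa_{0,1}$, $\kappa_{-1,2}$, and an automorphism class pulled back from $\Pic(\sV)$), this relation should precisely reduce the integral Picard group to $\zz\oplus\zz\oplus\zz/(8g+4)$, exhibiting the uniformity across the two parities of $d-g$. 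Matching the coefficients to recover the same torsion order $8g+4$ — rather than some different divisor of it — is the delicate point and is what the Chow ring of $\sV$ is meant to make tractable.
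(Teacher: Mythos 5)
Your treatment of the case $d-g$ even is fine: there the complement of the balanced stratum has codimension $2$, so $\Pic(\J^{d,\dagger}_{2,g})=\Pic(\J^{d,\dagger,\circ}_{2,g})$, and this is exactly Erman--Wood's Theorem 7.2. But the case $d-g$ odd --- which is precisely the new content of the theorem --- has a genuine logical gap: your excision sequence runs
\[\zz\cdot\bigl[\J^{d,\dagger,\vec{e}'}_{2,g}\bigr]\to\Pic\bigl(\J^{d,\dagger}_{2,g}\bigr)\to\Pic\bigl(\J^{d,\dagger,\circ}_{2,g}\bigr)\to 0,\]
which presents the Picard group of the \emph{open stratum} as a quotient of the group you are trying to compute. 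Knowing $\Pic(\J^{d,\dagger,\circ}_{2,g})$ and the boundary class does not determine $\Pic(\J^{d,\dagger}_{2,g})$: the sequence only says the unknown group is an extension of the known quotient by a cyclic group, and neither the kernel nor the extension class is pinned down (compare $\zz\xrightarrow{2}\zz\to\zz/2\to 0$ with $\zz\xrightarrow{(2,0)}\zz\oplus\zz/2\to(\zz/2)^2\to0$). You cannot ``un-quotient.'' So computing the Thom--Porteous class of the $\vec{e}'$-stratum, however explicitly, does not close the argument.

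The paper resolves this by running excision in the other direction: it embeds \emph{all} of $\J^{d,\dagger}_{2,g}$ (not just the balanced stratum) as the complement of the discriminant $\Delta$ inside the total space $\X$ of a vector bundle over the stack $\B^{d,\dagger}_{2,g}$ of pairs $(E,F)=(\alpha_*L,(\alpha_*\O_C/\O)^\vee)$ with bounded splitting type. The point is that $\Pic(\X)=\Pic(\B^{d,\dagger}_{2,g})=\zz a_1\oplus\zz a_2'\oplus\zz b_1$ is free of rank $3$, because the splitting loci removed in forming $\B^{d,\dagger}_{2,g}$ have codimension at least $g+1\geq 3$; the splitting stratification therefore never enters the Picard computation at all. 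The single relation $(8g+4)b_1$ is the class of the discriminant, computed as the pushforward of $c_3$ of a relative principal parts bundle (this computation is valid for both parities of $d$ since $\widetilde\Delta$ has codimension $3$ regardless of whether the evaluation map is everywhere surjective), and excision then gives $\Pic(\J^{d,\dagger}_{2,g})=\zz^3/\langle(8g+4)b_1\rangle$ uniformly in $d$. Note also that the torsion generator $b_1$ lives on the Tschirnhausen/$\mathrm{B}\gg_m$ factor of the base, which your setup over the stack of rank $2$ bundles alone does not make visible.
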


We also extend the calculation of the integral Picard group to the $\SL_2$ and $\PGL_2$ equivariant settings. Let us write $\J^d_{2,g} = [\J^{d,\dagger}_{2,g}/\SL_2]$ and $\sJ^d_{2,g} = [\J^{d,\dagger}_{2,g}/\PGL_2]$ for the respective quotient stacks. We note that $\J^d_{2,g} \to \sJ^d_{2,g}$ is a $\mu_2$-gerbe. Thus, their rational Chow rings are isomorphic, and in fact, we shall work with $\J^d_{2,g}$ for the rational computations. However, their integral Picard groups are not always the same.
We will see that $\Pic(\J^d_{2,g}) = \Pic(\J^{d,\dagger}_{2,g})$, but when working $\PGL_2$ equivariantly, the answer turns out to depend on the parity of $g$.

\begin{thm} \label{picthm}
For $g \geq 2$ and any $d$, we have
\[\Pic(\sJ^{d}_{2,g}) =\begin{cases} \zz \oplus \zz \oplus \zz/(8g + 4) & \text{if $g$ is odd} \\ 
\zz \oplus \zz \oplus \zz/(4g + 2) & \text{if $g$ is even.}
\end{cases}
\]
\end{thm}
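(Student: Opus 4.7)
The plan is to deduce $\Pic(\sJ^d_{2,g})$ from $\Pic(\J^d_{2,g})$ via the $\mu_2$-gerbe coming from the central extension $1 \to \mu_2 \to \SL_2 \to \PGL_2 \to 1$. The natural map
$\J^d_{2,g} = [\J^{d,\dagger}_{2,g}/\SL_2] \to [\J^{d,\dagger}_{2,g}/\PGL_2] = \sJ^d_{2,g}$
realizes this $\mu_2$-gerbe, so the standard gerbe exact sequence yields
\[
0 \to \Pic(\sJ^d_{2,g}) \to \Pic(\J^d_{2,g}) \xrightarrow{w} \zz/2 \to \mathrm{Br}(\sJ^d_{2,g}),
\]
where $w(\mathcal{M})$ is the $\mu_2$-weight with which $-I \in \SL_2$ acts on the fibers of an $\SL_2$-linearization. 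Combined with $\Pic(\J^d_{2,g}) = \Pic(\J^{d,\dagger}_{2,g}) = \zz \oplus \zz \oplus \zz/(8g+4)$ (the first equality is noted in the paper just before the theorem statement, the second is Theorem \ref{pthm}), the problem reduces to computing $w$ on generators.

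Next, I would extract explicit generators of $\Pic(\J^d_{2,g})$ from the proof of Theorem \ref{pthm} and evaluate $w$ on each. The two free generators should come from $\PGL_2$-equivariant constructions (for instance, tautological classes like $\kappa_{0,1}$ and $\kappa_{-1,2}$ obtained by pushing forward products of $c_1(\omega_f)$ and $c_1(\sL)$ along the universal curve), so they lie in $\ker(w)$; the whole computation therefore pivots on the torsion generator of order $8g+4$.

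To handle the torsion generator, I would use the Hirzebruch-surface model of $\J^{d,\dagger}_{2,g}$ that drives the rest of the paper. In that model, the $\SL_2$-action on $\pp^1$ lifts to the standard $\SL_2$-representation on $H^0(\pp^1, \O(1))$, so $-I$ acts on $\O_{\pp^1}(n)$ by $(-1)^n$; hence the $\mu_2$-weight of any class built from the tautological line bundles on the universal Hirzebruch surface reduces to a $\bmod 2$ count of twists. The expected result is that this count evaluates to $g \bmod 2$ on the torsion generator, so $w$ vanishes for $g$ odd and is surjective for $g$ even.

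Granting the parity calculation, the theorem follows. For $g$ odd, $\Pic(\sJ^d_{2,g}) = \Pic(\J^d_{2,g}) = \zz \oplus \zz \oplus \zz/(8g+4)$. For $g$ even, $\ker(w)$ consists of the two free summands together with the index-$2$ subgroup of $\zz/(8g+4)$, which is cyclic of order $4g+2$, giving $\Pic(\sJ^d_{2,g}) = \zz \oplus \zz \oplus \zz/(4g+2)$. The main obstacle is this parity computation: one must pin down a concrete representative of the torsion generator within the Hirzebruch-model construction and track the action of $-I \in \SL_2$ through it. A reassuring consistency check is the absence of $d$ from the final answer --- any such dependence would have to be absorbed into the two free summands, since the $\mu_2$-character can only contribute $\zz/2$-worth of information beyond what is already in $\Pic(\J^d_{2,g})$.
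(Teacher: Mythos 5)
Your overall strategy --- descend along the $\mu_2$-gerbe $\J^d_{2,g} \to \sJ^d_{2,g}$ and identify $\Pic(\sJ^d_{2,g})$ with the kernel of the weight character $w$ of $-I \in \SL_2$ --- is sound, and it is essentially the paper's argument run in the opposite order: the paper first identifies $\ker(w)$ upstairs on $\Pic(\B^d_{2,g}) = \zz a_1 \oplus \zz a_2' \oplus \zz b_1$ by quoting Lemma \ref{pin}, and only then quotients by the discriminant class $(8g+4)b_1$. The genuine gap is that your pivotal weight computation is only asserted (``the expected result is\dots''), and the asserted values are partly wrong. On the generators of $\Pic(\J^d_{2,g}) = \zz a_1 \oplus \zz a_2' \oplus (\zz/(8g+4))b_1$ one finds (reading off Lemma \ref{pin}) that $w(a_2') = 0$ and $w(b_1) \equiv g+1 \pmod 2$ --- so the torsion generator behaves as you predict, up to the slip of writing $g$ instead of $g+1$ --- but $w(a_1) \equiv d-g-1 \pmod 2$, which is nonzero whenever $d \equiv g \pmod 2$. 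Hence the free generators do \emph{not} all lie in $\ker(w)$, and they cannot be replaced by $\kappa_{0,1}$ and $\kappa_{-1,2}$: those classes are indeed $\PGL_2$-equivariant and so have weight $0$, but they are not integral generators of the free part (for instance $\kappa_{0,1} = (g-1)a_1$ modulo torsion).

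Consequently your structural conclusions are not justified as stated. When $g$ and $d$ are both odd, $w$ is surjective via $a_1$, so $\Pic(\sJ^d_{2,g})$ is a proper index-$2$ subgroup of $\Pic(\J^d_{2,g})$ rather than all of it; when $g$ and $d$ are both even, $\ker(w) = \{ma_1 + na_2' + kb_1 : m + k \equiv 0 \pmod 2\}$ is not ``the two free summands together with the index-$2$ subgroup of the torsion.'' It is true in all four parity cases that $\ker(w)$ is abstractly $\zz \oplus \zz \oplus \zz/(8g+4)$ or $\zz \oplus \zz \oplus \zz/(4g+2)$ according to the parity of $g$ alone, but seeing this requires knowing $w$ on all three generators and computing the kernel case by case --- which is precisely the content of Lemma \ref{pin} that your proposal defers. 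A correct completion would either carry out the $-I$-weight computation on explicit line bundles realizing $a_1$ and $b_1$ (such as $\pi_*(\det(\alpha_*\sL) \otimes \O_{\P}(-(d-g-1)))$ and $\pi_*(\F \otimes \O_{\P}(-(g+1)))$, whose weights are $(-1)^{d-g-1}$ and $(-1)^{g+1}$ because $-I$ acts by $-1$ on $\O_{\P}(1)$), or simply import Lemma \ref{pin} as the paper does.
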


The integral Picard group of $\sH_{2,g}$  was computed in \cite[Theorem 5.1]{ArsieVistoli} and it also depends on the parity of $g$. 
Combining that result with our proof of Theorem \ref{picthm}, it follows that the pullback map $\Pic(\sH_{2,g}) \to \Pic(\sJ^d_{2,g})$ is exactly the inclusion of the torsion subgroup. Explicit line bundles on $\mathscr{J}^d_{2,g}$ generating the Picard group are given in Section \ref{clb}.

It was shown in \cite[Theorem 6.4]{MeloViviani} that the Brauer class of the $\gg_m$-gerbe $\mathscr{J}^d_g \to J^d_g$ has order $\gcd(d - g + 1, 2g - 2)$. Using Theorem \ref{picthm} above, we obtain the following result for its restriction to the hyperelliptic locus.

\begin{cor} \label{bc}
The Brauer class of the $\gg_m$-gerbe $\mathscr{J}^d_{2,g} \to J^d_{2,g}$ has order $\gcd(d - g + 1, 2)$.
\end{cor}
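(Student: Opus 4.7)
The plan is to exploit the standard exact sequence attached to a $\gg_m$-gerbe. For $\mathscr{J}^d_{2,g} \to J^d_{2,g}$, the low-degree terms of the Leray spectral sequence for $\gg_m$ yield
\[0 \to \Pic(J^d_{2,g}) \to \Pic(\mathscr{J}^d_{2,g}) \xrightarrow{w} \zz \xrightarrow{\partial} \operatorname{Br}(J^d_{2,g}),\]
where $w$ sends a line bundle to its weight under the inertial $\gg_m$, and $\partial(1)$ is the Brauer class of the gerbe. Its order equals $[\zz : \operatorname{im}(w)]$, so I need to determine $\operatorname{im}(w)$ exactly.

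For the upper bound on the order, I produce line bundles of the requisite weights using determinants of cohomology. Consider
\[\lambda_{m,k} := \det R\pi_*\bigl(\sL \otimes \omega_\pi^{\otimes m} \otimes \alpha^*\O_{\pp^1}(k)\bigr),\]
working on $\J^{d,\dagger}_{2,g}$ and descending; here $\alpha$ is the universal hyperelliptic cover, and the twist by $\alpha^*\O_{\pp^1}(k)$ is a genuinely \emph{new} ingredient made possible by the hyperelliptic setting. Scaling $\sL$ by $\mu \in \gg_m$ multiplies $\lambda_{m,k}$ by $\mu^\chi$ where $\chi = d + m(2g-2) + 2k - g + 1$, so
\[w(\lambda_{m,k}) = (d - g + 1) + 2\bigl(m(g-1) + k\bigr).\]
As $(m,k)$ varies over $\zz^2$ these weights generate $\gcd(d - g + 1,\,2)\cdot \zz$, so $\operatorname{im}(w) \supseteq \gcd(d - g + 1, 2)\cdot \zz$ and the order of the Brauer class divides $\gcd(d - g + 1, 2)$.

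For the lower bound, I must show that when $d - g$ is odd no line bundle on $\mathscr{J}^d_{2,g}$ has odd weight. By Theorem \ref{picthm} the free part of $\Pic(\mathscr{J}^d_{2,g})$ has rank $2$; since $w(\lambda_{0,0}) = d - g + 1 \neq 0$ when $d \neq g - 1$ (with a small separate check in that case), $\operatorname{im}(w)$ is a nonzero subgroup $m\zz$ and the sequence above forces the free part of $\Pic(J^d_{2,g})$ to have rank $1$. To pin down $m$ I inspect the explicit generators of $\Pic(\mathscr{J}^d_{2,g})$ produced in Section \ref{clb}: the torsion comes from $\Pic(\sH_{2,g})$ and thus has weight $0$; one free generator descends to $\Pic(J^d_{2,g})$ (weight $0$); and the remaining free generator can be realized as a $\zz$-linear combination of determinants of cohomology of the form $\lambda_{m,k}$ above (since all the constructions entering Section \ref{clb} are of this flavour). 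Its weight therefore lies in the coset $(d - g + 1) + 2\zz$, which gives $m = \gcd(d - g + 1, 2)$ as required.

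The main obstacle is the lower bound: one has to verify that the non-descending free generator of $\Pic(\mathscr{J}^d_{2,g})$ coming out of the proof of Theorem \ref{picthm} actually lies in the subgroup spanned by the $\lambda_{m,k}$ (equivalently, has the claimed parity of weight), and is not some subtler hyperelliptic line bundle whose weight could be $1$. Once the explicit generator from Section \ref{clb} is identified, this reduces to computing $\chi$ of a single determinantal construction.
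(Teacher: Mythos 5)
Your overall framework coincides with the paper's: Corollary \ref{bc} is proved there via the exact sequence \eqref{MVeq} by computing $\mathrm{res}$ on an explicit generating set of $\Pic(\sJ^d_{2,g})$. However, your execution has two gaps. First, for the upper bound, the bundles $\lambda_{m,k}$ with $k$ odd do not automatically live on $\sJ^d_{2,g}$: the gerbe in question is the $\PGL_2$-quotient, $\alpha^*\O_{\pp^1}(k)$ is only $\SL_2$-equivariant, and $-I$ acts on it by $(-1)^k$, hence on $\det Rf_*(\sL\otimes\omega_f^{\otimes m}\otimes\alpha^*\O_{\pp^1}(k))$ by $(-1)^{k\chi}$. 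Descent along the $\mu_2$-gerbe $\J^d_{2,g}\to\sJ^d_{2,g}$ therefore fails exactly when $k$ and $\chi$ are both odd; since $\chi\equiv d-g+1\pmod 2$, in the case $d-g+1$ odd you may only use even $k$, and the available weights become $(d-g+1)+2m(g-1)+4k'$. These still generate $\zz$ because $\gcd(d-g+1,4)=1$, but that verification is missing. (When $d-g+1$ is even all $k$ descend and you do get $2\zz$.)

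Second, and more seriously, your lower bound rests on the assertion that the non-descending free generator of $\Pic(\sJ^d_{2,g})$ is a $\zz$-linear combination of the $\lambda_{m,k}$ ``since all the constructions entering Section \ref{clb} are of this flavour.'' That is not true: the generators $\mathscr{A}$ and $\mathscr{N}$ are built from $\det\alpha_*\sL$, where $\alpha_*\sL$ has rank $2$, so $\gg_m$ acts on them with weight $2$ or $4$ (see \eqref{AB}), whereas every $\lambda_{m,k}$ has weight $\equiv d-g+1\pmod 2$. When $d-g+1$ is odd these parities disagree, so $\mathscr{A}$ and $\mathscr{N}$ are provably \emph{not} in the span of the $\lambda_{m,k}$, and the structural claim you lean on cannot be the right mechanism. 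The correct (and simpler) route is the paper's direct computation: $\mathrm{res}(\mathscr{A}),\mathrm{res}(\mathscr{N})\in\{2,4\}$, $\mathrm{res}(\mathscr{B})=0$, $\mathrm{res}(\Lambda(0,1))=d-g+1$; since these four line bundles generate $\Pic(\sJ^d_{2,g})$ by Theorem \ref{picthm} and Section \ref{clb}, the image of $\mathrm{res}$ is $\zz$ when $d-g+1$ is odd and $2\zz$ when it is even, which is exactly the statement. You correctly flagged this step as the main obstacle; as written, it remains open.
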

Tensoring with the $g^1_2$ shows that all $\sJ^d_{2,g}$ with $d$ of the same parity are isomorphic. The above result shows that those of opposite parity cannot be isomorphic as $\gg_m$-stacks.

\begin{rem}
(1) Corollary \ref{bc} provides a new proof of \cite[Corollary 3.1]{MR}, which showed that a Poincar\'e line bundle exists on $J^d_{2,g}$ if and only if $d$ and $g$ have the same parity. Conversely, using that result and arguing as in \cite[Proposition 6.6]{MeloViviani} gives rise to an alternative proof of Corollary \ref{bc}.

(2) The Picard stack of a curve over a non-algebraically closed field is typically a non-trivial gerbe over its Picard scheme.
For genus $1$ curves, this Brauer class was studied and related to the period-index problem in \cite{AG}.
\end{rem}

\subsection{Overview of Techniques}
They key to understanding $\sJ^d_{2,g}$ is its relationship with a moduli stack $\sB^d_{2,g}$ of certain pairs of vector bundles on $\pp^1$. In particular, $\sJ^d_{2,g}$ is an open substack of a vector bundle $\mathscr{X}$ over $\sB^d_{2,g}$. Thus, the Chow ring of $\sJ^d_{2,g}$ is a quotient of the Chow ring of $\sB^d_{2,g}$.
Section \ref{s2} is devoted to defining
$\sB^d_{2,g}$ and determining its Chow ring. This builds off of our previous calculation of the Chow ring of the moduli stack of vector bundles on $\pp^1$ in \cite{Brd}; however, it is not an immediate consequence of earlier work, as we must restrict to an open substack where the splitting type is bounded, which introduces additional relations in the Chow ring. The relations all occur in codimension $g+1$ and higher, and proving that our claimed relations on $\sB^d_{2,g}$ are complete involves a careful analysis with universal splitting loci.

The next step is to study the closed complement of $\sJ^d_{2,g}$ inside $\mathscr{X}$. The complement can be described as the discriminant locus of a linear series of curves on the universal Hirzebruch surface over $\sB^d_{2,g}$. We control the relations that arise from excising this discriminant locus by using relative bundles of principal parts. We briefly review the construction and basic properties of relative bundles of principal parts in Section \ref{partsec}. We then use them  in Section \ref{ex} to construct the universal singular point, whose Chow groups surject onto the Chow groups of the discriminant locus. Even though the arguments in this section depend significantly on the parity of $d$, the final result for the rational Chow ring is the same. We also show that our codimension $1$ calculations hold integrally, proving Theorems \ref{pthm}  and \ref{picthm} in Section \ref{picsec}.

Finally, in Section \ref{taut}, we take advantage of an embedding of the universal curve over $\sJ^d_{2,g}$ inside a Hirzebruch surface to compute the twisted kappa classes in terms of our generators that were pulled back from $\sB^d_{2,g}$. We can then translate the presentation of $A^*(\sJ^d_{2,g})$ given in Section \ref{ex} into the form presented in Theorem \ref{intro}. To express the higher codimension kappa classes in terms of our codimension $1$ generators, we set up a system of recursive relations. Solving this recursion proves Corollary \ref{relscor}.

\begin{rem}
Let us also mention some recent related work. 
When we stratify $\sJ^d_{2,g}$ according to the splitting type of line bundles, each stratum is the moduli space of hyperelliptic curves on some fixed Hirzebruch surface.
The stable cohomology of each of these strata was found in \cite{BergZ}.
\end{rem}

\subsection{Acknowledgements} This research was conducted during the period I served as a Clay Research Fellow. I would like to thank Younghan Bae, Samir Canning, and Martin Olsson for valuable conversations related to this work.
 
\section{Construction of the moduli stack} \label{s2}

In \cite{ErmanWood}, Erman and Wood relate $\J^{d,\dagger}_{2,g}$ to a moduli stack of vector bundles on $\pp^1$. While our presentation is slightly different, the key ideas that we present below and in Sections \ref{themor} and \ref{stackdef} are due to them. The important original contributions of this section are in Sections \ref{usl} and \ref{cb}, where we determine the Chow ring of the stack $\B^d_{2,g}$ defined below.

Suppose we are given a hyperelliptic curve $\alpha: C \to \pp^1$ and a degree $d$ line bundle $L$ on $C$. There are two naturally associated vector bundles on $\pp^1$. First, there is the rank $2$ vector bundle $E := \alpha_*L$, which by \eqref{rr} has degree $d - g - 1$. If we denote the splitting type of $E$ by $(e_1, e_2)$ with $e_1 \leq e_2$, then 
\[h^0(C, L \otimes \alpha^*\O_{\pp^1}(-e_2)) = h^0(\pp^1, E(-e_2)) \geq 1.\]
Hence, the degree of $L \otimes \alpha^*\O_{\pp^1}(-e_2)$ must be nonnegative. This implies
$d - 2e_2 \geq 0$, or equivalently, $e_1 \geq \frac{d}{2} - g - 1$. 

In the special case that $L = \O_C$, the pushforward $\alpha_*\O_C$ necessarily has an $\O_{\pp^1}$ summand. The dual of the quotient is called the \emph{Tschernhausen bundle} of the cover, which we'll denote $F := (\alpha_*\O_C/\O_{\pp^1})^\vee$. By \eqref{rr}, the line bundle $F$ has degree $g +1$.
 As we will explain below, the association of $(\alpha: C \to \pp^1, L)$ with the pair of vector bundles $(E, F)$ defines a morphism of $\J^{d,\dagger}_{2,g}$ to a moduli stack $\B^{d,\dagger}_{2,g}$ parameterizing pairs of vector bundles on $\pp^1$. We would also like to allow the $\pp^1$ target to vary in families, in other words take the quotient of both of these stacks by $\SL_2$ or $\PGL_2$. Below, we use caligraphic font for the $\SL_2$ quotient and script font for the $\PGL_2$ quotient. The caligraphic version is always a $\mu_2$-gerbe over the script version.
 The distinction between these will only matter for the computation of the integral Picard group.

 \subsection{The morphism $\J^{d,\dagger}_{2,g} \to \B^{d,\dagger}_{2,g}$} \label{themor}
An object of the stack $\J^{d,\dagger}_{2,g}$ over a scheme $S$ is the data of a pair $(\alpha: C \to S \times \pp^1, L)$ 
where
\begin{enumerate}
    \item $\alpha: C \to S \times \pp^1$ is a degree $2$ cover such that the composition $C \to S \times \pp^1 \to S$ is a family of smooth genus $g$ curves;
    \item  $L$ is a line bundle on $C$ having degree $d$ on fibers of $C \to S$.
\end{enumerate}
Given $\alpha$ and $L$ as above, the pushforward $E := \alpha_*L$ is a rank $2$ vector bundle on $S \times \pp^1$ having degree $d - g - 1$ on fibers over $S$. In addition, $F := (\alpha_*\O_C/\O_P)^\vee$ is a line bundle of relative degree $g+1$ on $S \times \pp^1$.

Let us define $\B^{d, \dagger}_{2,g}$ to be the stack whose objects over a scheme $S$ are pairs $(E, F)$ where 
\begin{enumerate}
\item $E$ is a rank $2$, relative degree $d -g - 1$ vector bundle on $S \times \pp^1$ whose restrictions to fibers of $S \times \pp^1 \to S$ have splitting type $(e_1, e_2)$ with $e_1\geq \frac{d}{2} - g - 1$;
\item $F$ is a relative degree $g+1$ line bundle on $S \times \pp^1$. 
\end{enumerate}
Sending $(\alpha: C \to S \times \pp^1)$ to $(E, F)$ defines a morphism of stacks $\J^{d,\dagger}_{2,g} \to \B^{d,\dagger}_{2,g}$.
We define $\B^d_{2,g} = [\B^{d,\dagger}_{2,g}/\SL_2]$ and $\sB^d_{2,g} = [\B^{d,\dagger}_{2,g}/\PGL_2]$.
Taking the $\SL_2$ or $\PGL_2$ quotient of our morphism above, we obtain morphisms $\J^d_{2,g} \to \B^d_{2,g}$ and $\sJ^d_{2,g} \to \sB^d_{2,g}$. These are defined in the same way, where $S \times \pp^1$ is replaced with a $\pp^1$-bundle $\pp V$ where $V$ is a rank $2$ vector bundle on $S$ with trivial determinant in the $\SL_2$ case, or it is replaced with a family $P \to S$ of smooth genus $0$ curves in the $\PGL_2$ case.

An essential fact, which we shall show in Section \ref{stackdef}, is that the morphism $\J^d_{2,g} \to \B^d_{2,g}$ factors as an open inclusion inside of a vector bundle over $\B^{d}_{2,g}$, and similarly with script letters.
Consequently, the Chow ring of $\J^d_{2,g}$ is generated by the pullbacks of classes from $\B^d_{2,g}$. 
The main focus of this section is to understand the intersection theory of $\B^d_{2,g}$.

\subsection{Stacks of vector bundles on $\pp^1$}
Let $\V_{r,d}^{\dagger}$ denote the moduli stack of rank $r$, degree $d$ vector bundles on $\pp^1$. An object of $\V_{r,d}^{\dagger}$ over a scheme $S$ is the data of a rank $r$, relative degree $d$ vector bundle $E$ on $S \times \pp^1$.
A morphism between objects $(S, E)$ and $(S', E')$ is a cartesian diagram
\begin{center}
\begin{tikzcd}
\pp^1 \times S'\arrow{d} \arrow{r}{\xi} & \pp^1 \times S \arrow{d}  \\
S' \arrow{r} & S,
\end{tikzcd}
\end{center}
together with an isomorphism $\phi: \xi^*E \to E'$.

We also require the quotient stacks $\V_{r,d} = [\V_{r,d}^{\dagger}/\SL_2]$ and $\sV_{r,d} = [\V_{r,d}^{\dagger}/\PGL_2]$. Explicitly, the objects of $\V_{r,d}$ over a scheme $S$ are triples $(S, V, E)$ where  $V$ is a rank $2$ vector bundle on $S$ with trivial determinant and $E$ is a rank $r$, relative degree $d$ vector bundle on $\pp V$. Meanwhile, the objects of $\sV_{r,d}$ over a scheme $S$ are 
$(S, P, E)$ where $P \to S$ is a family of smooth genus $0$ curves and $E$ is a rank $r$, relative degree $d$ vector bundle on $P$. (The difference between the objects of $\V_{r,d}$ and $\sV_{r,d}$ is that $P \to S$ need not have a relative degree $1$ line bundle.) The natural map $\V_{r,d} \to \sV_{r,d}$ is a $\mu_2$-gerbe. For more background on the moduli stack of vector bundles on $\pp^1$, we refer the reader to \cite{Brd}.

We define
\begin{equation} \label{1} \widehat{\B}^d_{2,g} := \V_{2,d - g - 1} \times_{\BSL_2} \V_{1,g+1} 
\end{equation}
and
\begin{equation} \label{2} \widehat{\sB}^d_{2,g} := \sV_{2,d-g-1} \times_{\BPGL_2} \sV_{1,g+1}.
\end{equation}
These stacks parameterize pairs of vector bundles on $\pp^1$-bundles, respectively familes of genus $0$ curves.
Note that, by construction, $\B^d_{2,g}$ is the open substack of $\widehat{\B}^d_{2,g}$ where the restriction of the universal rank $2$ vector bundle to fibers of the universal genus $0$ curve has splitting type $(e_1, e_2)$ with $e_1 \geq \frac{d}{2} - g - 1$, and similarly for script letters. 

\begin{rem} \label{bgm}
We note that $\V_{1,e}^{\dagger}$ is equivalent to $\mathrm{B}\gg_m$. Suppose $E$ is a line bundle of degree $e$ on $S \times \pp^1$. Then $E(-e)$ is a line bundle of relative degree $0$. Writing $\pi: S \times \pp^1 \to S$ for the projection, cohomology and base change shows there is an isomorphism $E(-e) \cong \pi^*\pi_*E(-e)$. Thus the equivalence $\V_{1,e}^{\dagger} \to \mathrm{B}\gg_m$ is given by sending $(S, E)$ to the line bundle $\pi_*E(-e)$. The inverse map is defined by sending a line bundle $L$ on $S$ to the line bundle $\pi^*L \otimes \O_{\pp^1}(e)$ on $S \times \pp^1$.  A similar argument shows that $\V_{1,e}$ is equivalent to $\mathrm{B}\gg_m \times \BSL_2$. (Replace $S \times \pp^1$ with $\pp V$ and replace $\O_{\pp^1}(1)$ with $\O_{\pp V}(1)$.)

However, the $\PGL_2$ quotient is sensitive to the parity of $e$. Suppose $E$ is a line bundle of degree $e$ on a family of genus $0$ curves $P \to S$. If $e$ is even, then $E\otimes \omega_{P/S}^{\otimes e/2} \cong \pi^*\pi_* (E\otimes \omega_{P/S}^{\otimes e/2})$ and a similar argument applies to show $\sV_{1,e}$ is equivalent to $\mathrm{B}\gg_m \times \BPGL_2$. However, if $e$ is odd, it turns out $\sV_{1,e}$ is equivalent to $\BGL_2$. Given $E$ on $\pi: P \to S$, we produce a rank $2$ vector bundle on $S$ by $V := \pi_*(E \otimes \omega_{P/S}^{\otimes \lfloor e/2 \rfloor})$. Using cohomology and base change, we find that $P \cong \pp V$ and $\O_{\pp V}(1) \cong E \otimes \omega_{P/S}^{\otimes \lfloor e/2 \rfloor}$, so $E$ can be recovered as $\O_{\pp V}(1) \otimes \omega_{P/S}^{-\otimes \lfloor e/2 \rfloor}$.
\end{rem}

We determined the integral Picard groups of stacks of the form in \eqref{1} and \eqref{2} in \cite[Lemma 3.5]{Pic}. To state the result, we need some more notation. We shall follow the notation in \cite{Pic} with one difference: We have not imposed the condition that our vector bundles $E$ and $F$ on $P \to S$ are globally generated on fibers. A ``boundedness condition" does come into play when we restrict to the open substack $\B^d_{2,g} \subset \widehat{\B}^d_{2,g}$, but we first need to understand $\widehat{\B}^d_{2,g}$.

Let us write $\pi: \P \to \widehat{\B}^d_{2,g}$ for the universal $\pp^1$-bundle, pulled back from $\widehat{\B}^d_{2,g} \to \BSL_2$. 
It comes equipped with a relative degree $1$ line bundle $\O_{\P}(1)$. Let $z = c_1(\O_{\P}(1)) \in A^1(\P)$. In addition, let $c_2 = c_2(\pi_*\O_{\P}(1))$ be the pullback of the universal second Chern class from $\BSL_2$. By the projective bundle formula, we have $z^2 = -\pi^*c_2 \in A^2(\P)$.

Next, let $\E$ be the universal rank $2$ vector bundle on $\P$ and let $\F$ be the universal rank $1$ vector bundle on $\P$.
Since $\P \to \widehat{\B}^d_{2,g}$ is a $\pp^1$-bundle, there exist $a_i, a_i', b_i, b_i' \in A^*(\widehat{\B}^d_{2,g})$ such that
\[c_i(\E) = \pi^*a_i + \pi^* a_i' z \qquad \text{and} \qquad 
c_i(\F) = \pi^*b_i + \pi^*b_i'z.\]
Note that $a_1' = d - g - 1$ and $b_1' = g+ 1$ are the relative degrees of $\E$ and $\F$ respectively. Since $\pi^*: A^*(\widehat{\B}^d_{2,g}) \to A^*(\P)$ is an inclusion, in what follows, we shall often omit $\pi^*$'s from notation simply identifying $a_i, a_i', b_i, b_i'$ and $c_2$ with their images under it.

By the sentence before \cite[Lemma 3.5]{Pic},  $\Pic(\widehat{\B}^d_{2,g})$ is freely generated by $a_1, a_2'$ and $b_1$. The following is a consequence of \cite[Lemma 3.5]{Pic}. (Note that $b_2' = 0$ since our $\F$ has rank $1$.)
\begin{lem} \label{pin}
The natural map $\widehat{\B}^d_{2,g} \to \widehat{\sB}^d_{2,g}$ induces an inclusion
\[\Pic(\widehat{\sB}^d_{2,g}) \hookrightarrow \Pic(\widehat{\B}^d_{2,g}) = \zz a_1 \oplus \zz a_2' \oplus \zz b_1\]
whose image is the subgroup generated by
\[\begin{cases}
a_1, a_2', b_1 & \text{if $g$ is odd and $d$ is even} \\
2a_1, a_2', b_1 & \text{if $g$ and $d$ are both odd} \\
a_1, a_2', 2b_1 & \text{if $g$ is even and $d$ is odd} \\
a_1 - b_1, 2a_1, a_2' &\text{if $g$ and $d$ are both even.}
\end{cases}\]
\end{lem}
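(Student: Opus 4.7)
The plan is to invoke \cite[Lemma 3.5]{Pic}, which describes the integral Picard group of stacks of the form $\sV_{r,\delta} \times_{\BPGL_2} \sV_{r',\delta'}$ as a sublattice of the Picard group of the analogous $\BSL_2$ fiber product, cut out by parity conditions on the relative degrees $\delta$ and $\delta'$. Specializing to $(r,\delta) = (2, d-g-1)$ and $(r',\delta') = (1, g+1)$, and noting that $b_2'$ does not appear because $\F$ has rank $1$, should immediately produce the four cases in the statement.

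Conceptually, the argument runs as follows. The map $\widehat{\B}^d_{2,g} \to \widehat{\sB}^d_{2,g}$ is a $\mu_2$-gerbe induced by the central $\mu_2 \subset \SL_2$. Since $\mu_2$ acts on the universal $V$ by $\pm 1$ but trivially on $\pp V$, the tautological inclusion $\O_{\pp V}(-1) \hookrightarrow \pi^*V$ shows that $\O_{\pp V}(1)$ has nontrivial $\mu_2$-weight. Hence $\O_{\pp V}(n)$ descends from $\P = \pp V$ to the universal genus $0$ curve over $\widehat{\sB}^d_{2,g}$ if and only if $n$ is even. The universal bundles $\E$ and $\F$, by contrast, descend by construction, so their Chern classes descend too. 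Expanding $c_1(\E) = a_1 + (d-g-1) z$ and $c_1(\F) = b_1 + (g+1) z$ then forces $a_1$ to descend iff $d - g - 1$ is even (iff $d$ and $g$ have opposite parity), and $b_1$ to descend iff $g + 1$ is even (iff $g$ is odd). Finally, $a_2' = \pi_*(c_2(\E))$ always descends, because $c_2(\E)$ descends and $\pi$ is equivariant for the gerbe structure.

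A case-by-case analysis on the parities of $d$ and $g$ then reads off the four cases. The only subtlety is when $d$ and $g$ are both even: neither $a_1$ nor $b_1$ descends individually, but the combination $a_1 - b_1 = c_1(\E) - c_1(\F) - (d - 2g - 2) z$ does, since $d - 2g - 2$ is even. Taking $\{a_1 - b_1,\ 2a_1,\ a_2'\}$ as generators of the image lattice matches the statement. I do not anticipate a substantive obstacle, as all the structure is provided by the cited lemma; the remaining work is careful parity bookkeeping together with the observation that in the even-even case one must combine $a_1$ and $b_1$ to generate the lattice.
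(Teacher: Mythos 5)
Your proposal matches the paper's treatment: Lemma \ref{pin} is stated there as a direct consequence of \cite[Lemma 3.5]{Pic} (together with the fact, from the sentence preceding that lemma, that $\Pic(\widehat{\B}^d_{2,g})$ is freely generated by $a_1, a_2', b_1$), with no further argument given. Your supplementary $\mu_2$-weight bookkeeping --- $z$ has weight $1$, $\E$ and $\F$ descend, so $a_1$ has weight $d-g-1$, $b_1$ has weight $g+1$, and $a_2'=\pi_*c_2(\E)$ has weight $0$ --- is a correct independent verification of all four cases, including the combination $a_1-b_1$ in the even--even case.
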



Next we show that the rational Chow ring of $\widehat{\B}^d_{2,g}$ is freely generated by $a_1, a_2', a_2,b_1, $ and $c_2$. This is a special case of the following more general fact regarding such fiber products.

\begin{lem} \label{free}
For any $r, d, s, e$, the rational Chow ring of $\V_{r,d} \times_{\BSL_2} \V_{s,e}$ is free on the $a_i, a_i', b_i, b_i'$ and $c_2$. In particular, 
\[A^*(\widehat{\B}^d_{2,g}) = \qq[a_1, a_2, a_2', b_1, c_2].\]
\end{lem}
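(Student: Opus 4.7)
The plan is to bootstrap from the Chow ring computation of $\V_{r,d}$ in \cite{Brd}, which gives
\[A^*(\V_{r,d}) = \qq[a_1, \ldots, a_r, a_2', \ldots, a_r', c_2]\]
as a free polynomial ring, and then show that forming the fiber product over $\BSL_2$ only introduces the Chern classes of the second bundle while identifying the two copies of $c_2$, producing no new relations.

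For the ``in particular'' statement regarding $\widehat{\B}^d_{2,g}$, the cleanest route uses Remark \ref{bgm}. Since $\V_{1,g+1} \simeq \mathrm{B}\gg_m \times \BSL_2$, we obtain
\[\widehat{\B}^d_{2,g} = \V_{2,d-g-1} \times_{\BSL_2} \V_{1,g+1} \simeq \V_{2,d-g-1} \times_{\BSL_2} (\mathrm{B}\gg_m \times \BSL_2) \simeq \V_{2,d-g-1} \times \mathrm{B}\gg_m.\]
Combining $A^*(\V_{2,d-g-1}) = \qq[a_1, a_2, a_2', c_2]$ from \cite{Brd} with $A^*(\mathrm{B}\gg_m) = \qq[b_1]$ via the projective-bundle-formula-at-the-limit for $\mathrm{B}\gg_m = \varinjlim \pp^n$ gives the claimed polynomial ring. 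One then verifies that the class $b_1 = c_1(\F) - (g+1)z$ on $\P$ descends to $c_1(\pi_*\F(-g-1))$ on the base, matching the generator of $A^*(\mathrm{B}\gg_m)$ under the equivalence of Remark \ref{bgm}.

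For the general $(r,d,s,e)$, one would extend the ind-stack argument of \cite{Brd}. The fiber product is an increasing union of finite-type open substacks $\mathcal{U}_N$ in which the splitting types of both universal bundles are bounded by $N$. Each $\mathcal{U}_N$ is a quotient of an open subscheme of an affine space by an algebraic group, and its Chow ring is generated by the pullbacks of $a_i, a_i', b_j, b_j', c_2$, with any relations supported on negatively unbalanced splitting loci in the complement. The main obstacle --- already the key technical point of \cite{Brd} for the single-bundle case --- is tracking these codimensions uniformly. Because the splitting loci for $E$ and $F$ are defined independently on the universal $\pp^1$-bundle, their codimensions can be estimated separately, and the bounds from the single-bundle case transfer directly; the excised relations then vanish in the inverse limit, yielding the free polynomial ring.
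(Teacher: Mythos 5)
Your argument for the ``in particular'' statement is correct and takes a genuinely different route from the paper. Using Remark \ref{bgm} to rewrite $\widehat{\B}^d_{2,g} = \V_{2,d-g-1} \times_{\BSL_2} (\mathrm{B}\gg_m \times \BSL_2) \simeq \V_{2,d-g-1} \times \mathrm{B}\gg_m$ and then adjoining $b_1$ via the projective bundle formula applied to the finite-dimensional approximations of $\mathrm{B}\gg_m$ is clean, and your identification of $b_1$ with $c_1(\pi_*\F(-g-1))$ is consistent with the normalization $c_1(\F) = b_1 + (g+1)z$. This reduces the case actually needed later in the paper to the single-bundle computation $A^*(\V_{2,d-g-1}) = \qq[a_1,a_2,a_2',c_2]$ from \cite{Brd}, which is arguably more economical than the paper's argument for this special case. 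It does not, however, touch the general statement, since $\V_{s,e}$ for $s \geq 2$ is not a product of $\BSL_2$ with something simple.

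For general $(r,d,s,e)$ there is a genuine gap. The two load-bearing claims in your third paragraph --- that each bounded-splitting-type open $\mathcal{U}_N$ of the \emph{fiber product} has Chow ring generated by the pullbacks of $a_i, a_i', b_j, b_j', c_2$, and that the relations introduced by excision sit in codimension growing with $N$ --- are exactly what must be proved, and you assert rather than establish them; in particular, generation of $A^*(\V_{r,d} \times_{\BSL_2} \V_{s,e})$ by the two sets of tautological classes is a K\"unneth-type statement over $\BSL_2$ that does not follow formally from the single-bundle case. The paper avoids redoing this analysis by citing \cite[Theorem 4.4]{part1}, which already gives generation and a codimension bound $\min(d,e)+1$ on relations for the fiber product of the \emph{globally generated} loci $\V^{\mathrm{gg}}_{r,d} \times_{\BSL_2} \V^{\mathrm{gg}}_{s,e}$, and then applies the device you are missing: twisting both universal bundles by $\O_{\pp^1}(m)$ gives an isomorphism $\V_{r,d} \times_{\BSL_2} \V_{s,e} \cong \V_{r,d+mr} \times_{\BSL_2} \V_{s,e+ms}$ under which the generators transform by an invertible change of variables, while the complement of the globally generated locus acquires arbitrarily large codimension as $m \to \infty$. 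Fixing a codimension $j$ and taking $m$ large then yields freeness degree by degree. If you want to complete your version, either import that twisting argument or supply a proof of the two asserted claims for $\mathcal{U}_N$ (e.g.\ via an explicit presentation of the fiber product as a quotient stack).
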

\begin{proof}
Let $\V^{\mathrm{gg}}_{r,d} \subset \V_{r,d}$ denote the open substack parameterizing globally generated vector bundles.
In \cite[Theorem 4.4]{part1}, we proved that the Chow ring of any product of the form $\V^{\mathrm{gg}}_{r,d} \times_{\BSL_2} \V^{\mathrm{gg}}_{s,e}$ is generated by $a_i, a_i', b_i, b_i',$ and $c_2$, and all relations among these classes lie in codimension at least $\min(d, e) + 1$. 

Note that
twisting both of the universal vector bundles by $\O_{\pp^1}(m)$ gives an isomorphism of $\V_{r,d} \times_{\BSL_2} \V_{s,e}$
with
$\V_{r,d+mr} \times_{\BSL_2} \V_{s,e+ms}$. 
As we increase $m$, the degrees of the vector bundles increase, and the codimension of the complement of 
\[\V^{\mathrm{gg}}_{r,d+mr} \times_{\BSL_2} \V^{\mathrm{gg}}_{s,e+ms} \subset 
\V_{r,d+mr} \times_{\BSL_2} \V_{s,e+ms}
\]
tends to infinity as $m$ tends to infinity.
The relationship between the $a_i, a_i', b_i, b_i',$ and $c_2$ and the new versions in higher degree is given by an invertible change of variables. By choosing $m$ large enough, it follows that, for each fixed codimension $j$, we have $A^j(\V_{r,d} \times_{\BSL_2} \V_{s,e})$ is generated by the $a_i, a_i', b_i, b_i',$ and $c_2$ and there are no relations among them.
\end{proof}

We now have a good understanding of the intersection theory of 
$\widehat{\B}^d_{2,g}$. However, our morphsim $\J^d_{2,g} \to \widehat{\B}^d_{2,g}$ factors through the open substack $\B^d_{2,g} \subset \widehat{\B}^d_{2,g}$ where the splitting type of $\E$ is not too unbalanced. Our next task is therefore to determine the relations on $\B^d_{2,g}$ that occur from bounding the splitting type of $\E$.

\subsection{Universal splitting loci} \label{usl}
Given integers $i, j$ with $i + j = e$, 
we define the \emph{universal splitting locus} $\Sigma_{i,j} \subset \V_{2,e}$ as follows. It is the substack whose points over a scheme $S$ are pairs $(P, E)$ where $P \to S$ is a $\pp^1$-bundle and $E$ is a rank $2$ vector bundle such that $E$ has locally constant splitting type $(i, j)$, see \cite[Section 3]{789}. As explained there, we have an equivalence of stacks 
\[\Sigma_{i,j} \cong \mathrm{B} \mathrm{Aut}(\O(i) \oplus \O(j)) \rtimes \SL_2.\]
As such, $\Sigma_{i,j}$ has codimension $j - 1 - 1$ inside $\V_{2,e}$. The closure of $\Sigma_{i,j}$ is the union of the $\Sigma_{i',j'}$ with $i' \leq i$.

These splitting loci are significant for us because
\[\B^d_{2,g} = \widehat{\B}^d_{2,g} \smallsetminus \overline{\Sigma}_{\lceil \frac{d+1}{2} \rceil, \lfloor \frac{d+1}{2} \rfloor - g - 2}.\]
Notice that $\overline{\Sigma}_{\lceil \frac{d+1}{2} \rceil, \lfloor \frac{d+1}{2} \rfloor - g - 2}$ has codimension $g+1$ if $d$ is odd and codimension $g+2$ when $d$ is even. In either case, since we assume throughout that $g \geq 2$, it follows that
\begin{equation} \label{pe} \Pic( \widehat{\B}^d_{2,g}) = \Pic(\B^d_{2,g}) \qquad \text{and} \qquad  \Pic(\widehat{\sB}^d_{2,g}) =  \Pic(\sB^d_{2,g}).
\end{equation}
However, we still need to understand the relations that are introduced in higher codimension.

The goal of this section is to prove the following closed formula for the fundamental class of the closure $[\overline{\Sigma}_{i,j}] \in A^{j - i - 1}(\V_{2,e})$ in terms of our generators $a_1, a_2, a_2'$ and $c_2$.

\begin{lem} \label{ff}
If $e$ is odd, then $j - i - 1$ is even, and $[\overline{\Sigma}_{i,j}] \in A^{j - i - 1}(\V_{2,e})$ is a non-zero multiple of
\[
\prod_{i < k <\frac{e}{2}}
(ka_1 - a_2')((e-k)a_1 - a_2') + (e-2k)^2(a_2 + k(e-k)c_2).\]
If $e$ is even, then $j - i - 1$ is odd, and $[\overline{\Sigma}_{i,j}] \in A^{j - i - 1}(\V_{2,e})$ is a non-zero multiple of
\[(ea_1 - 2a_2') \cdot \prod_{i < k < \frac{e}{2}}
(ka_1 - a_2')((e-k)a_1 - a_2') + (e-2k)^2(a_2 + k(e-k)c_2).
\]
\end{lem}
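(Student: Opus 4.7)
The plan is to express $[\overline{\Sigma}_{i,j}]$ as a Thom--Porteous degeneracy class and then factor the resulting expression.

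First, let $U := \V_{2,e} \setminus \overline{\Sigma}_{i-1, j+1}$, whose complement has codimension $j - i + 1$. For splittings $(k, e-k)$ with $k \geq i$ (i.e., on $U$), a direct calculation gives $h^0(\mathcal{E}(-j)) = 0$ when $k > i$ and $h^0(\mathcal{E}(-j)) = 1$ when $k = i$. Represent $R\pi_*\mathcal{E}(-j)|_U$ by a two-term complex of vector bundles $[K^0 \xrightarrow{\varphi} K^1]$ of ranks $1$ and $j - i - 1$. Then $\overline{\Sigma}_{i,j} \cap U$ is precisely the vanishing locus of $\varphi$, and its expected codimension $\mathrm{rk}(K^1) = j - i - 1$ matches the actual codimension. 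By Thom--Porteous,
\[
[\overline{\Sigma}_{i,j}] = c_{j-i-1}(K^1 - K^0) \in A^{j-i-1}(\V_{2,e}),
\]
where $[K^1 - K^0] = -[R\pi_*\mathcal{E}(-j)]$ in $K$-theory. This identity extends from $U$ to all of $\V_{2,e}$ since $\V_{2,e} \setminus U$ has codimension exceeding $j-i-1$.

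Second, compute this Chern class in terms of $a_1, a_2, a_2', c_2$. Grothendieck--Riemann--Roch applied to $\pi: \P \to \V_{2,e}$ with $\mathrm{td}(T_\pi) = 2z/(1-e^{-2z})$ and the identities $c_1(\mathcal{E}(-j)) = a_1 + (e-2j)z$, $c_2(\mathcal{E}(-j)) = a_2 + j(e-j)c_2 + (a_2' - ja_1)z$ expresses $\mathrm{ch}(R\pi_*\mathcal{E}(-j))$ as a polynomial in $a_1, a_2, a_2', c_2$, from which Newton's identities extract $c_{j-i-1}(K^1 - K^0)$. A more structural route applies $R\pi_*$ to the Beilinson-type sequence
\[ 0 \to \mathcal{E}(-k-1) \to \mathcal{E}(-k) \otimes \pi^*V \to \mathcal{E}(-k+1) \to 0 \]
on $\P$, producing the $K$-theoretic recursion $[R\pi_*\mathcal{E}(-k-1)] + [R\pi_*\mathcal{E}(-k+1)] = [V] \cdot [R\pi_*\mathcal{E}(-k)]$. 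Iterating this from a balanced twist $k \approx e/2$ (where $R\pi_*\mathcal{E}(-k)$ has an elementary description) unwinds to the desired class.

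The main obstacle is the third step: matching this expression with the claimed product. Each quadratic factor $F_k := (ka_1 - a_2')((e-k)a_1 - a_2') + (e-2k)^2(a_2 + k(e-k)c_2)$ should arise from one iteration of the recursion, with the crossing $k = e/2$ in the even-$e$ case contributing the additional linear factor $ea_1 - 2a_2'$. Two consistency checks help locate the factors: each $F_k$ vanishes identically on the stratum $\Sigma_{k, e-k}$ (where $a_1 = a_2' = 0$ and $a_2 = -k(e-k)c_2$), so the product vanishes on every $\Sigma_{k, e-k}$ with $k > i$ exactly as $[\overline{\Sigma}_{i,j}]$ must; and a direct GRR calculation in the smallest nontrivial case $(e, i, j) = (3, 0, 3)$ yields $[\overline{\Sigma}_{0,3}] = \tfrac{1}{2} F_1$, confirming the predicted nonzero multiple. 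The technical heart is to show that one inductive step $(i+1, j-1) \rightsquigarrow (i, j)$ contributes exactly the factor $F_{i+1}$ (and in the even-$e$ case, that the initial step from $\V_{2,e}$ to $\overline{\Sigma}_{e/2-1, e/2+1}$ contributes $ea_1 - 2a_2'$).
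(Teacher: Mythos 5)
Your proposal sets up a legitimate alternative framework (Thom--Porteous applied to a two-term resolution of $R\pi_*\E(-j)$, then GRR), and indeed the paper's own remark after the lemma acknowledges this route exists via \cite[Lemma 5.1]{L} --- though it notes the available conversion to the generators $a_1,a_2,a_2',c_2$ is only carried out modulo $c_2$, whereas the lemma as stated includes the $c_2$ terms. The problem is that you never actually prove the statement: you explicitly defer ``the technical heart,'' namely that the degeneracy class factors as the claimed product of quadratics $F_k$ (times $ea_1-2a_2'$ when $e$ is even). The Thom--Porteous output is $c_{j-i-1}(K^1-K^0)$, a single Chern class of a $K$-theory class, and your proposed ``structural route'' --- iterating the relation $[R\pi_*\E(-k-1)]+[R\pi_*\E(-k+1)]=[V]\cdot[R\pi_*\E(-k)]$ --- cannot by itself produce the factorization, because Chern classes do not factor along additive relations in $K$-theory; one would need an actual filtration or exact sequence of the relevant bundles, which this recursion does not supply. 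So the key assertion of the lemma is asserted, sanity-checked, but not proved.

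Two smaller issues. First, your consistency check is justified incorrectly: on the stratum $\Sigma_{k,e-k}$ the classes $a_1,a_2'$ do \emph{not} vanish; rather $\iota^*a_1=n_1+n_2$, $\iota^*a_2'=kn_2+(e-k)n_1$, $\iota^*a_2=n_1n_2-k(e-k)c_2$, and one checks that the specific combination $F_k$ pulls back to $(2k-e)(e-2k)n_1n_2+(e-2k)^2n_1n_2=0$. (This computation is in fact the engine of the paper's argument: $F_{k}$ is, up to scale, the generator of the kernel of $A^2(\V_{2,e})\to A^2(\Sigma_{k,e-k})$, and the paper shows by an excision and dimension count that $[\overline{\Sigma}_{k-1,e-k+1}]$ equals the Gysin pullback of that generator to $\overline{\Sigma}_{k,e-k}$, so push--pull produces one factor per inductive step --- exactly the step you left open.) Second, when invoking Thom--Porteous you should justify why the class of the expected-codimension degeneracy locus is a \emph{non-zero} multiple of $[\Sigma_{i,j}]$ on $U$ before extending over the higher-codimension complement; this is standard but should be said. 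As it stands, the proposal is a plausible plan whose central step is missing.
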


\begin{rem} A formula for $[\overline{\Sigma}_{i,j}]$ in terms of Chern classes of pushforward sheaves $\pi_* \E(m)$ is given in \cite[Lemma 5.1]{L}. In principle, one could use Grothendeick--Riemann--Roch to find the Chern classes of $\pi_* \E(m)$ in terms of our desired generators and plug that in. The generating function in \cite[Lemma 5.4]{Brd}
provides the conversion modulo the ideal generated by $c_2$. Following this route, one can derive formulas that agree with those in Lemma \ref{ff} modulo $c_2$. (In the eventual context where we apply this formula, it actually only matters what the answer is modulo $c_2$.)
However, the argument given below has the benefit of providing a conceptual reason for why the formulas factor and giving the more general result when $c_2 \neq 0$, which may be of use elsewhere.
\end{rem}

\begin{proof}
 Our basic strategy is to induct on the difference $j - i$.
The key step is to find the fundamental class of $\overline{\Sigma}_{i-1,j+1}$, viewed as a codimension $2$ class on $\overline{\Sigma}_{i,j}$ (or codimension $1$ class in the case $i = j$).
We will prove that $[\overline{\Sigma}_{i -1, j+1}]$ is equivalent to the pullback of a class in $A^2(\V_{2,e})$. Using the push-pull formula and applying induction then gives rise to the factoring formula in the statement.

To set up the argument, consider the restriction diagram
\begin{center}
\begin{tikzcd}
\iota'^*\E \arrow{d} \arrow{r} & \E \arrow{d} \\
P \arrow{d}[swap]{p} \arrow{r}{\iota'} & \P \arrow{d}{\pi} \\
\Sigma_{i,j} \arrow{r}{\iota} & \V_{2,e}.
\end{tikzcd}
\end{center}
First suppose $i \neq j$. As explained in \cite[Section 3]{789}, the stack $\Sigma_{i,j}$ comes equipped with two line bundles $\N_1$ and $\N_2$ (called the ``HN bundles") such that $\iota'^* \E$ sits in an exact sequence
\begin{equation} \label{filt} 0 \rightarrow (p^*\N_2)(j) \rightarrow  \iota'^* \E \rightarrow (p^*\N_1)(i) \rightarrow 0.
\end{equation}
By \cite[Lemma 3.1]{789}, we have $A^*(\Sigma_{i,j}) = \qq[n_1, n_2, c_2]$ where $n_k = c_1(\N_k)$.
Using \eqref{filt}, we find
\begin{align*}
\iota'^*c_1(\E) &= (p^*n_1 + iz) + (p^*n_2 + jz) = p^*(n_1 + n_2) + ez \\
\iota'^*c_2(\E) &= (p^*n_1 + iz)(p^*n_2 + jz) = p^*(n_1n_2 - ijc_2) + p^*(in_2 + jn_1)z.
\end{align*}
It follows that
\begin{equation} \label{pf}\iota^* a_1 = n_1 + n_2, \qquad \iota^* a_2 = n_1n_2 - ij c_2, \qquad \text{and} \qquad \iota^*a_2' = in_2 + jn_1.
\end{equation}

Since $A^*(\V_{2,e})$ is a domain \cite{Brd}, the fundamental class $[\overline{\Sigma}_{i,j}]$ is a non-zero divisor. By the push-pull formula, it follows that the Gysin pullback $\overline{\iota}^*: A^*(\V_{2,e}) \to A^*(\overline{\Sigma}_{i,j})$ is injective. 
Now, $A^2(\V_{2,e})$ has dimension $5$ with basis $a_1^2, a_1a_2', a_2',a_2,c_2$. Thus, $\dim A^{2}(\overline{\Sigma}_{i,j}) \geq 5$.
Meanwhile,
by excision, we have
\begin{equation} \label{e0} A^0(\overline{\Sigma}_{i-1,j+1}) \rightarrow A^2(\overline{\Sigma}_{i,j}) \rightarrow A^2(\Sigma_{i,j}) \rightarrow 0.
\end{equation}
By the discussion above, we know that $A^2(\Sigma_{i,j})$ has dimension $4$ with basis $n_1^2, n_1n_2, n_2^2, c_2$. It follows that $\overline{\Sigma}_{i-1,j+1} \in A^2(\overline{\Sigma}_{i,j})$ is non-zero and
\begin{equation} \label{iso} A^2(\V_{2,e}) = \langle a_1^2, a_1a_2', a_2',a_2,c_2 \rangle \xrightarrow{\overline{\iota}^*} A^2(\overline{\Sigma}_{i,j}) = \langle [\overline{\Sigma}_{i-1,j+1}], n_1^2,n_1n_2,n_2^2,c_2 \rangle
\end{equation}
is an isomorphism of $\qq$-vector spaces.
In particular, $[\overline{\Sigma}_{i-1,j+1}]$ lies in the image of $\overline{\iota}^*$. Now let $\alpha_{i-1,j+1}$ be a generator for the kernel of
\[A^2(\V_{2,e}) = \langle a_1^2, a_1a_2', a_2',a_2,c_2 \rangle \xrightarrow{\iota} A^2(\Sigma_{i,j}) = \langle n_1^2,n_1n_2,n_2^2,c_2 \rangle.\]
By \eqref{e0} and \eqref{iso}, we see that  $[\overline{\Sigma}_{i-1,j+1}] \in A^2(\overline{\Sigma}_{i,j})$ is a non-zero multiple of $\overline{\iota}^*\alpha_{i-1,j+1}$.

It remains to determine $\alpha_{i-1,j+1}$. From  \eqref{pf} we see that
\begin{align} 
\iota^*(ia_1 - a_2') &= (i - j)n_2 \label{n2} \\
\iota^*(ja_1 - a_2') &= (j - i)n_1 \label{n1} \\
\iota^*(a_2 + ijc_2) &= n_1n_2. \label{n1n2}
\end{align}
This helps us spot a generator of the kernel, namely any non-zero multiple of
\[\alpha_{i-1,j+1} = (ia_1 - a_2')(ja_1 - a_2') + (i - j)^2(a_2 + ijc_2).\]
By the push-pull formula, $[\overline{\Sigma}_{i-1,j+1}]$ is a non-zero multiple of $ \alpha_{i-1,j+1} [\overline{\Sigma}_{i,j}]$.
Inducting on the difference $j - i$ completes the proof in the case $e$ is odd.

If $e$ is even, we need a small variation on this argument to find the class of $\overline{\Sigma}_{i-1,i+1}$ inside $\overline{\Sigma}_{i,i} = \V_{2,e}$, since it is codimension $1$. Here, there is an exact sequence
\[A^0(\overline{\Sigma}_{i-1,i+1})\rightarrow A^1(\V_{2,e}) \to A^1(\Sigma_{i, i}) \rightarrow 0.\]
In this case, there is a rank $2$ HN bundle $\N$ with the property that $\iota'^*\E \cong (p^*\N)(i)$. However, if we think of $\N_1$ and $\N_2$ as the Chern roots of $\N$ and $A^*(\Sigma_{i,i})$ as the subring of symmetric polynomials in $n_1, n_2$, then
the calculations in \eqref{pf} are still valid.  In particular, we see that the kernel of $A^1(\V_{2,e}) \to A^1(\Sigma_{i,i})$ is generated by $ia_1 - a_2'$. Hence, $[\overline{\Sigma}_{i-1,i+1}]$ is a non-zero multiple of $ia_1 - a_2' = \frac{e}{2}a_1 - a_2'$.
\end{proof}

We make note of a specialization of these formulas that will be relevant in later sections. Although the universal formula on $\V_{2,e}$ is not a multiple of a perfect power, the specialization we wind up using is. The following is easy to check directly.
\begin{cor} \label{specialize}
If we substitute $c_2 = 0$ and $a_2 = \frac{1}{4}a_1^2$, then the formula for $[\overline{\Sigma}_{i,j}]$ becomes a non-zero multiple of $(ea_1 - 2a_2')^{j - i - 1}$.
\end{cor}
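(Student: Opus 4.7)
The plan is a direct algebraic verification, splitting into the two cases of Lemma \ref{ff}. The key observation is that every factor in the product, after the specialization, becomes a perfect square proportional to $(ea_1 - 2a_2')^2$; then counting factors in each parity case shows the total exponent matches $j - i - 1$.

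First I would substitute $c_2 = 0$ and $a_2 = \tfrac{1}{4}a_1^2$ into the ``generic'' factor appearing in Lemma \ref{ff}, namely $(ka_1 - a_2')((e-k)a_1 - a_2') + (e-2k)^2(a_2 + k(e-k)c_2)$. Expanding and grouping, this becomes
\[
\left[k(e-k) + \tfrac{(e-2k)^2}{4}\right] a_1^2 - e\, a_1 a_2' + (a_2')^2.
\]
The bracketed coefficient simplifies via $4k(e-k) + (e-2k)^2 = e^2$ (completing the square), so the factor equals
\[
\tfrac{1}{4}\bigl(ea_1 - 2a_2'\bigr)^2.
\]

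Next I would count factors. When $e$ is odd, the product runs over integers $k$ with $i < k < e/2$, i.e. $k \in \{i+1, \dots, (e-1)/2\}$, giving $(e - 2i - 1)/2 = (j - i - 1)/2$ factors; raising each to the second power yields exactly $(ea_1 - 2a_2')^{j-i-1}$ times a nonzero rational. When $e$ is even, the analogous product has $e/2 - 1 - i$ factors, and one must additionally include the extra linear factor $(ea_1 - 2a_2')$; the total exponent is $2(e/2 - 1 - i) + 1 = e - 2i - 1 = j - i - 1$, again yielding a nonzero multiple of $(ea_1 - 2a_2')^{j-i-1}$.

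There is no real obstacle: the only nontrivial ingredient is the identity $4k(e-k) + (e-2k)^2 = e^2$, and the rest is matching exponents in the two parity cases. The corollary then follows by combining this per-factor simplification with the factor counts above.
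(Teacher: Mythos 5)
Your verification is correct and matches the paper's intent: the paper simply states the corollary is ``easy to check directly,'' and your computation --- reducing each factor to $\tfrac{1}{4}(ea_1 - 2a_2')^2$ via the identity $4k(e-k) + (e-2k)^2 = e^2$ and then counting factors in the two parity cases --- is exactly that direct check.
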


\subsection{The Chow ring of $\B^d_{2,g}$} \label{cb}
We are now ready to find the rational Chow ring of $\B^d_{2,g}$.
Let us define $\Sigma_{i,j}(\E) \subset \B^d_{2,g}$ to be the preimage of $\Sigma_{i,j}$ along the projection $\B^d_{2,g} \to \V_{2,d - g - 1}$.
We first note an important corollary of the proof of Lemma \ref{ff}.

\begin{cor} \label{pbcor}
(1) For any $i, j$ with $i + j = d - g - 1$, 
the pullback map 
\[A^*(\widehat{\B}^d_{2,g}) \to A^*(\Sigma_{i,j}(\E))\]
is surjective.

(2) If $\Sigma_{i,j}(\E) \subset \B^d_{2,g}$, then
\[A^*(\B^d_{2,g}) \to A^*(\Sigma_{i,j}(\E))\]
is also surjective.
\end{cor}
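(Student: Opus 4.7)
The plan is to reduce both statements to the explicit Chern class identities already obtained in the proof of Lemma \ref{ff}, combined with the product decomposition supplied by Remark \ref{bgm}. By definition,
\[\Sigma_{i,j}(\E) = \Sigma_{i,j} \times_{\V_{2,d-g-1}} \widehat{\B}^d_{2,g} = \Sigma_{i,j} \times_{\BSL_2} \V_{1,g+1},\]
and the equivalence $\V_{1,g+1} \simeq \mathrm{B}\gg_m \times \BSL_2$ of Remark \ref{bgm} collapses the fiber product to a direct product $\Sigma_{i,j} \times \mathrm{B}\gg_m$. Consequently, $A^*(\Sigma_{i,j}(\E))$ is generated as a ring by the pullback of $A^*(\Sigma_{i,j})$ together with the class $b_1$ from the $\mathrm{B}\gg_m$ factor, which itself is the pullback of a generator of $A^*(\widehat{\B}^d_{2,g})$. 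Hence (1) reduces to showing that the restriction $A^*(\V_{2,d-g-1}) \to A^*(\Sigma_{i,j})$ is surjective.

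To verify the latter, I would simply revisit the calculations already carried out in the proof of Lemma \ref{ff}. When $i \neq j$, we have $A^*(\Sigma_{i,j}) = \qq[n_1, n_2, c_2]$, and the equations \eqref{n2} and \eqref{n1} express $n_1$ and $n_2$ as images of explicit linear combinations of $a_1$ and $a_2'$; the class $c_2$ is itself pulled back from $\BSL_2$. When $i = j$ (so $d - g - 1$ is even), the ring $A^*(\Sigma_{i,i})$ is the ring of symmetric functions in the Chern roots of the rank-$2$ HN bundle $\N$, and reading \eqref{pf} symmetrically gives $c_1(\N) = \iota^*a_1$ and $c_2(\N) = \iota^*(a_2 + i^2 c_2)$, which together with $c_2$ generate. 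Either way, surjectivity follows, establishing (1).

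For (2), the open immersion $\B^d_{2,g} \hookrightarrow \widehat{\B}^d_{2,g}$ induces a surjective pullback $A^*(\widehat{\B}^d_{2,g}) \twoheadrightarrow A^*(\B^d_{2,g})$. Under the hypothesis $\Sigma_{i,j}(\E) \subset \B^d_{2,g}$, the restriction map in (1) factors as
\[A^*(\widehat{\B}^d_{2,g}) \twoheadrightarrow A^*(\B^d_{2,g}) \to A^*(\Sigma_{i,j}(\E)),\]
and surjectivity of the composite forces surjectivity of the second arrow. I do not expect a serious obstacle: the Chern-root identities are already in hand from Lemma \ref{ff}, and the only conceptual input beyond that is the observation that $\V_{1,g+1}$ contributes only the additional generator $b_1$, which lifts tautologically to $\widehat{\B}^d_{2,g}$. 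The case $i = j$ is the only place where one must be slightly careful, since there $a_2'$ is not an independent generator on $\Sigma_{i,i}$, but this does no harm because $a_1$ and $a_2$ already suffice to hit $c_1(\N)$ and $c_2(\N)$.
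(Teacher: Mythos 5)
Your proposal is correct and follows essentially the same route as the paper: identify $\Sigma_{i,j}(\E) \to \Sigma_{i,j}$ as a trivial $\gg_m$-gerbe via Remark \ref{bgm}, hit the generators $n_1, n_2, c_2, b_1$ using the identities \eqref{n1} and \eqref{n2} from the proof of Lemma \ref{ff}, and deduce (2) from (1) by factoring through the open substack. The only cosmetic difference is in the case $i=j$, where the paper simply observes that $\Sigma_{i,i}(\E)$ is open in $\widehat{\B}^d_{2,g}$ (so surjectivity is immediate from excision) rather than writing out the symmetric-function generators, but your version of that step is also fine.
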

\begin{proof}
If $i = j$, then $\Sigma_{i,i}(\E)$ is open inside $\widehat{\B}^d_{2,g}$, so (1) is immediate. If $i \neq j$, then $A^*(\Sigma_{i,j})$ is generated by $n_1, n_2, c_2$. Moreover, $\Sigma_{i,j}(\E) \to \Sigma_{i,j}$ is a trivial $\gg_m$-gerbe (as $\V_{1,g+1}$ is a copy of $\mathrm{B}\gg_m \times \BSL_2$ by Remark \ref{bgm}).
Hence, $A^*(\Sigma_{i,j}(\E))$ is generated by $n_1, n_2, c_2, b_1$. It is easy to see that $c_2$ and $b_1$ are pulled back from $\widehat{\B}^d_{2,g}$.
By \eqref{n1} and \eqref{n2}, the classes $n_1$ and $n_2$ also lie in the image of the pullback map.

(2) follows from (1): If $\Sigma_{i,j}(\E) \subset \B^d_{2,g}$, then the pullback map $A^*(\widehat{\B}^d_{2,g}) \to A^*(\Sigma_{i,j}(\E))$ factors through $A^*(\B^d_{2,g})$. Thus, in order for (1) to be surjective, (2) must be surjective.
\end{proof}

With this, we can now completely determine the Chow ring of $\B^d_{2,g}$, which is the key takeaway from this section.

\begin{lem} \label{theb}
Let $S^d_g$ be the fundamental class of the closure of the 
$(\lceil \frac{d+1}{2} \rceil, \lfloor \frac{d+1}{2} \rfloor - g - 2)$
splitting locus for $\E$. Note that $S^d_g \in A^{g+1}(\widehat{\B}^d_{2,g})$ when $d$ is odd and 
$S^d_g \in A^{g+2}(\widehat{\B}^d_{2,g})$ when $d$ is even.
In either case,
\[A^*(\B^d_{2,g}) = \frac{A^*(\widehat{\B}^d_{2,g})}{\langle S^d_g\rangle } = \frac{\qq[a_1,a_2',a_2,b_1,c_2]}{\langle S^d_g\rangle }.\]
\end{lem}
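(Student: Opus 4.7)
The plan is to use excision for the open inclusion $\B^d_{2,g} \subset \widehat{\B}^d_{2,g}$. Write $W_0$ for the closure $\overline{\Sigma}_{\lceil (d+1)/2 \rceil,\lfloor (d+1)/2 \rfloor - g - 2}(\E)$ and $\iota: W_0 \hookrightarrow \widehat{\B}^d_{2,g}$ for the inclusion, so that $S^d_g = \iota_*[1]$ and excision gives
\[
A^*(\B^d_{2,g}) = A^*(\widehat{\B}^d_{2,g})/\iota_* A^*(W_0).
\]
The projection formula gives $\iota_*\iota^*\gamma = \gamma \cdot S^d_g$, so $\langle S^d_g\rangle \subseteq \iota_* A^*(W_0)$ is automatic. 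The substance of the lemma is the reverse inclusion, and for this it suffices to show that the pullback $\iota^*: A^*(\widehat{\B}^d_{2,g}) \to A^*(W_0)$ is surjective, since then every $\beta \in A^*(W_0)$ equals $\iota^*\gamma$ and $\iota_*\beta = \gamma \cdot S^d_g \in \langle S^d_g\rangle$.

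To prove surjectivity of $\iota^*$, I would set up a descending induction along the splitting stratification inside $W_0$. Define the nested closed substacks
\[
W_k := \overline{\Sigma}_{\lceil (d+1)/2 \rceil - k,\; \lfloor (d+1)/2 \rfloor - g - 2 + k}(\E), \qquad k = 0, 1, 2, \ldots,
\]
with open strata $U_k \subset W_k$ (complement $W_{k+1}$), inclusions $u_k: W_k \hookrightarrow \widehat{\B}^d_{2,g}$, and $v_k: W_{k+1} \hookrightarrow W_k$ of codimension $c_k \geq 1$. I will prove by induction on codimension $j$, simultaneously for all $k$, that $u_k^*: A^j(\widehat{\B}^d_{2,g}) \to A^j(W_k)$ is surjective. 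The base case $j = 0$ is trivial. For the inductive step, take $\beta \in A^j(W_k)$; by Corollary \ref{pbcor}(1), its restriction to the open stratum $U_k$ is pulled back from some $\gamma \in A^j(\widehat{\B}^d_{2,g})$, so by excision there exists $\beta' \in A^{j-c_k}(W_{k+1})$ with $\beta - u_k^*\gamma = (v_k)_*\beta'$. The inductive hypothesis (applied at strictly lower codimension $j - c_k$) produces $\gamma' \in A^{j-c_k}(\widehat{\B}^d_{2,g})$ with $\beta' = u_{k+1}^*\gamma' = v_k^* u_k^*\gamma'$, and the projection formula rewrites
\[
(v_k)_*\beta' = u_k^*\gamma' \cdot [W_{k+1}]_{W_k},
\]
where $[W_{k+1}]_{W_k}$ denotes the fundamental class inside $W_k$.

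The final input needed is that $[W_{k+1}]_{W_k}$ is itself pulled back from $\widehat{\B}^d_{2,g}$. This is precisely the content of the Gysin calculation carried out in the proof of Lemma \ref{ff}: the class $[\overline{\Sigma}_{i-1,j+1}]$ restricted to $\overline{\Sigma}_{i,j}$ is a nonzero multiple of the pullback of the explicit class $\alpha_{i-1,j+1} \in A^2(\V_{2,e})$ (with the balanced codimension-$1$ variant used when $e$ is even), and via the projection $\widehat{\B}^d_{2,g} \to \V_{2, d-g-1}$ these pull back to classes on $\widehat{\B}^d_{2,g}$. Writing $[W_{k+1}]_{W_k} = u_k^*\alpha$ and substituting yields $\beta = u_k^*(\gamma + \gamma'\alpha)$, completing the induction. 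The main obstacle is precisely organizing this inductive bookkeeping: a naive induction on the depth $k$ does not terminate because the stratification continues indefinitely, but induction on codimension $j$ does, since $A^{j-c_k}(W_{k+1})$ vanishes once $j$ is exceeded by the relevant shifts—this is what makes Corollary \ref{pbcor}(1) and the restriction formula from Lemma \ref{ff} combine into a clean pullback-surjectivity statement.
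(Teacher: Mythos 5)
Your proposal is correct and takes essentially the same route as the paper: excision for $\B^d_{2,g} \subset \widehat{\B}^d_{2,g}$, followed by an induction over the splitting-locus stratification of the complement that combines the surjectivity of Corollary \ref{pbcor}(1) with the fact, established in the proof of Lemma \ref{ff}, that the class of each deeper stratum inside the previous closure is restricted from the ambient stack. The paper compresses this into ``repeated use of excision and the push-pull formula'' and then observes that each $[\overline{\Sigma}_{i,j}(\E)]$ with $j - i \geq g+2$ is a multiple of $S^d_g$ by the product formula of Lemma \ref{ff}, which is the same content as your module-generation formulation.
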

\begin{proof}
By Lemma \ref{free}, we know that $A^*(\widehat{\B}_{2,g}^d) = \qq[a_1, a_2', a_2, b_1, c_2']$. 
The complement of $\B^d_{2,g} \subset \widehat{\B}^d_{2,g}$ is the union of splitting loci $\Sigma_{i,j}(\E)$ with $j - i \geq g+2$. Let $m = g+1$ if $d$ is odd and $g+2$ if $d$ is even, so it is the codimension of the complement.
Because the pullback map in Corollary \ref{pbcor}(1) is surjective, repeated use of excision and the push-pull formula shows that the image of
\[A^{*-m}\left(\bigcup_{j - i \geq g + 2} \Sigma_{i,j}(\E) \right) \rightarrow A^*(\B^d_{2,g})\]
is the ideal generated by the fundamental classes of the $\overline{\Sigma}_{i,j}(\E)$ with $j - i \geq g +2$. Considering the formulae in Lemma \ref{ff}, we see that each of these classes is a multiple of $S^d_g$.
\end{proof}

\subsection{Factoring the morphism} \label{stackdef}
We now have a good understanding of the Chow ring of $\B^d_{2,g}$. In order to use that to understand the Chow ring of $\J^d_{2,g}$, we need to understand the fibers of the morphism $\J^d_{2,g} \to \B^d_{2,g}$. The results of this section follow from \cite{ErmanWood} by taking the $\SL_2$ or $\PGL_2$ quotient of the stacks used there. Before stating the result, we include some motivation, which may also be helpful in understanding the perspective we take in later sections.

Informally, when we ask about the fibers of $\J^d_{2,g} \to \B^d_{2,g}$ we are asking: Given a rank $2$ vector bundle $E$ and line bundle $F$, what is the space of all pairs $(\alpha: C \to \pp^1, L)$ such that $E = \alpha_*L$ and $F = (\alpha_*\O_C/\O_{\pp^1})^\vee$? 
Given such a pair $(\alpha: C \to \pp^1, L)$, there is a surjective map $\alpha^*E = \alpha^*\alpha_*L \to L$ on $C$. This gives rise to a diagram
\begin{center}
\begin{tikzcd}
C \arrow{r}{\iota} \arrow{dr}[swap]{\alpha} & \pp E^\vee \arrow{d}{\gamma} \\
& \pp^1
\end{tikzcd}
\end{center}
where $\iota$ is an embedding.
The curve $C$ has relative degree $2$ over $\pp^1$, so it is the vanishing of a section of $\O_{\pp E^\vee}(2) \otimes \gamma^* A$ for some line bundle $A$ on $\pp^1$. To identify the line bundle $A$, consider the sequence
\[0 \rightarrow \O_{\pp E^\vee}(-2) \otimes \gamma^* A^\vee \rightarrow \O_{\pp E^\vee} \rightarrow \iota_* \O_C \rightarrow 0.\]
Applying $\gamma_*$ we obtain
\[0 \rightarrow \O_{\pp E^\vee} \rightarrow \alpha_*\O_C \rightarrow A^\vee \otimes R^1\gamma_* \O_{\pp E^\vee}(-2) = A^\vee \otimes \det E^\vee \rightarrow 0, \]
where the equality above follows from relative Serre duality (the relative dualizing sheaf is $\omega_{\gamma} = \gamma^*\det E \otimes \O_{\pp E^\vee}(-2)$ by the relative Euler sequence). It follows that $A^\vee \otimes \det E^\vee = F$, so $A = F \otimes \det E^\vee$. Thus, $C$ is defined by an element of the vector space
\[X_{E,F} := H^0(\pp E^\vee, \O_{\pp E^\vee}(2) \otimes \gamma^*(F\otimes \det E^\vee)) = H^0(\pp^1, \Sym^2 E \otimes F \otimes \det E^\vee). \]
Thus, the fiber of $\J^d_{2,g} \to \B^d_{2,g}$ over the point $(E, F)$ should be the open subset of $X_{E, F}$ corresponding to equations whose vanishing locus defines a smooth curve.

Note that the line bundle $F \otimes \det E^\vee$ has degree $g + 1 - (d - g - 1) = 2g + 2 - d$. (This quantity is denoted $k$ in \cite[Section 5]{ErmanWood}; the line bundle $A$ above corresponds to the line bundle Erman and Wood call $L$.)
Suppose $E$ has splitting type $(e_1, e_2)$ with $e_1 \leq e_2$.
Then $\Sym^2 E \otimes F \otimes \det E^\vee$ has splitting type
\[(2e_1 + 2g + 2 - d, e_1 + e_2 + 2g + 2 - d, 2e_2 + 2g + 2 - d).\]
Because $e_1 \geq \frac{d}{2} - g - 1$ (see Section \ref{themor}), all the degrees above are nonnegative.

By slight abuse of notation, let $\pi: \P \to \B^d_{2,g}$ and $\E, \F$ denote the restriction of the universal bundles to $\B^d_{2,g} \subset \widehat{\B}^d_{2,g}$.
Because the splitting type of $\E$ on the fibers of $\P \to \B^d_{2,g}$ satisfies $e_1 \geq \frac{d}{2} - g - 1$, cohomology and base change implies that $\pi_* \Sym^2 \E \otimes \F \otimes \det \E^\vee$ is locally free on $\B^d_{2,g}$ and its fibers are identified with the vector spaces $X_{E, F}$. Let 
\[\mathcal{X} = \pi_* \Sym^2 \E \otimes \F \otimes \det \E^\vee\]
be the total space of this vector bundle on $\B^d_{2,g}$. Let $\mathscr{X}$ denote the analogous vector bundle on $\sB^d_{2,g}$.

The above discussion suggests (and more or less suggests a proof) that the morphism $\J^d_{2,g} \to \B^d_{2,g}$ factors through an open inclusion in the total space of the vector bundle $\mathcal{X}$ on $\B^d_{2,g}$. This is what Erman and Wood showed. They worked with a fixed $\pp^1$, in other words with the pullback of our situation to $\B^{d,\dagger}_{2,g}$. The pullback of $\X$ along $\B^{d,\dagger}_{2,g} \to \B^d_{2,g}$ is the stack Erman and Wood denote $\overline{\sJ}^{2,g,d}_{\mathrm{bd}}$. Theorem 5.1 of \cite{ErmanWood} says that $\J^{d,\dagger}_{2,g}$ (which is denoted $\sJ^{2,g,d}$ in \cite{ErmanWood}) is an open substack of $\overline{\sJ}^{2,g,d}_{\mathrm{bd}}$. The theorem below therefore follows by taking the $\SL_2$ quotient or $\PGL_2$ quotient.

\begin{thm}[Erman--Wood] \label{os}
The stack $\J^d_{2,g}$ is an open substack of $\mathcal{X}$. Similarly, $\sJ^d_{2,g}$ is an open substack of $\mathscr{X}$.
\end{thm}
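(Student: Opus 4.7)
The plan is to deduce Theorem \ref{os} from Theorem 5.1 of \cite{ErmanWood} by passing to $\SL_2$- and $\PGL_2$-quotients. Erman and Wood prove the analogous statement over a fixed $\pp^1$: namely, $\J^{d,\dagger}_{2,g}$ is an open substack of the pullback $\mathcal{X}^\dagger$ of $\mathcal{X}$ along $\B^{d,\dagger}_{2,g} \to \B^d_{2,g}$. Their open immersion sends $(\alpha: C \to \pp^1, L)$ to the canonical section of $\O_{\pp E^\vee}(2) \otimes \gamma^*(F \otimes \det E^\vee)$ that cuts out the embedded curve $\iota: C \hookrightarrow \pp E^\vee$ induced by the evaluation surjection $\alpha^*\alpha_*L \to L$, exactly as motivated in Section \ref{stackdef}. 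The inverse, defined on the locus where the section cuts out a smooth divisor of relative degree two over $\pp^1$, sends $s$ to $(V(s),\, \iota^*\O_{\pp E^\vee}(1))$.

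The next step is to verify that Erman--Wood's construction is $\SL_2$-equivariant, and indeed $\PGL_2$-equivariant. This is essentially automatic: every operation used in their construction --- formation of $\pp E^\vee$, pushforward and pullback along $\gamma: \pp E^\vee \to P$, the symmetric square, the determinant, and the tensor product, as well as the pushforward $\pi_*$ --- is canonically defined for a pair $(E,F)$ on an arbitrary $\pp^1$-bundle $P \to S$, with no choice of trivialization of $P$ required. In particular, the section $s$ produced from $(\alpha, L)$ is canonical, as is the reconstruction of $(C, L)$ from $s$. Consequently the open immersion $\J^{d,\dagger}_{2,g} \hookrightarrow \mathcal{X}^\dagger$ descends along both the $\SL_2$-quotient and the $\PGL_2$-quotient, yielding the asserted open immersions $\J^d_{2,g} \hookrightarrow \mathcal{X}$ and $\sJ^d_{2,g} \hookrightarrow \mathscr{X}$.

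The only remaining point to check is that $\mathscr{X}$ is itself well-defined on $\sB^d_{2,g}$, i.e., that $\pi_*(\Sym^2 \E \otimes \F \otimes \det \E^\vee)$ descends from $\B^d_{2,g}$ to $\sB^d_{2,g}$. This is clear because the construction is manifestly canonical and uses no $\SL_2$-specific data such as a square root of $\det \E$; cohomology and base change (valid by the splitting-type hypothesis $e_1 \geq \frac{d}{2} - g - 1$ in the definition of $\B^d_{2,g}$) then produces a genuine vector bundle on $\sB^d_{2,g}$. The main (modest) obstacle is just the bookkeeping verification that the formation of $\mathcal{X}$ commutes with the quotient operations, but this is routine once all the constructions above are recognized as natural in the universal family of genus-zero curves; the essential geometric content is already contained in \cite[Theorem 5.1]{ErmanWood}.
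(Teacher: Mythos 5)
Your proposal is correct and follows essentially the same route as the paper: the paper likewise deduces the theorem from \cite[Theorem 5.1]{ErmanWood} (which identifies $\J^{d,\dagger}_{2,g}$ as an open substack of the pullback of $\X$ to $\B^{d,\dagger}_{2,g}$, i.e.\ the stack Erman and Wood call $\overline{\sJ}^{2,g,d}_{\mathrm{bd}}$) and then passes to the $\SL_2$ and $\PGL_2$ quotients. Your additional remarks on the canonicity/equivariance of the constructions and on the descent of $\mathscr{X}$ make explicit what the paper leaves implicit, but the argument is the same.
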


\begin{rem}
The identification of $\overline{\sJ}^{2,g,d}_{\mathrm{bd}}$ with the pullback of $\X$ along $\B^{d,\dagger}_{2,g} \to \B^d_{2,g}$ can be found in the proof of \cite[Theorm 5.1]{ErmanWood}. Erman and Wood first construct the total space of a vector bundle, denoted $\mathbb{V}(\F^\vee)$ there, on a stack they call $\mathscr{U}^{2,d}_k$. In our notation, $\mathscr{U}^{2,d}_k$ is the open substack of $\V^\dagger_{2,d - g + 1}$ where the splitting type satisfies $e_1 \geq \frac{d}{2} - g - 1$. Then they take a $\gg_m$ quotient of their $\mathbb{V}(\F^\vee)$. The result is a vector bundle on $\mathscr{U}^{2,d}_k \times \mathrm{B} \gg_m$; this stack is what we have called $\B^{d,\dagger}_{2,g}$ (see Remark \ref{bgm}).
\end{rem}

Since $\mathcal{X} \to \B^d_{2,g}$ is a vector bundle, their Chow rings are isomorphic. We have already found the Chow ring of $\B^d_{2,g}$. In order to determine the Chow ring of $\J^d_{2,g}$, we must determine the relations that arise from excising the complement of $\J^d_{2,g} \subset \mathcal{X}$. The complement is a discriminant locus corresponding to the equations in each $X_{E,F}$ that define singular curves in $\pp E^\vee$. To get a better understanding of this complement, we use the machinery of relative bundles of principal parts.

\section{Principal parts} \label{partsec}

Here we define and state some basic properties of relative bundles of principal parts that we shall need in the next section. We shall only need first order principal parts. For more on bundles of principal parts, we refer the reader to \cite[Sections 7 and 11]{EH}; or, for further variations, see \cite[Section 3]{part2}.

Let $b: Y \to Z$ be a smooth proper surjective morphism.
Let $\Delta_{Y/Z}\subset Y \times_Z Y$ be the relative diagonal and let $p_1, p_2: Y\times_ZY \to Y$ be the two projection maps.
Let $\W$ be a vector bundle on $Y$ and let $\mathcal{I}_{\Delta_{Y/Z}}$ denote the ideal sheaf of the diagonal in $Y \times_Z Y$. The bundle of relative first order principal parts $P^1_{Y/Z}(\W)$ is defined as
\[
P^1_{Y/Z}(\W)=p_{2*}(p_1^*\W\otimes \mathcal{O}_{Y \times_Z Y}/\mathcal{I}_{\Delta_{Y/Z}}^{2}).
\]
The fiber of $P^1_{Y/Z}(\W)$ at a point $y \in Y$ corresponds to the restriction of $\W$ to a first order neighborhood of $y$.
We shall need the following properties of $P^1_{Y/Z}(\W)$:
\begin{prop}[Theorem 11.2 of \cite{EH}]\label{parts}
With notation as above,
\begin{enumerate}
    \item There is an isomorphism $b^*b_*\W\xrightarrow{\sim} p_{2*}p_1^*\W$.
    \item The quotient map $p_1^*\W\rightarrow p_1^*\W\otimes \mathcal{O}_{Y \times_Z Y}/\mathcal{I}_{\Delta_{Y/Z}}^{2}$ pushes forward to a map
    \[
    b^*b_*\W\cong p_{2*}p_1^*\W\rightarrow P^{1}_{Y/Z}(\W),
    \]
    which, fiber by fiber, associates to a global section $f$ of $\W$ a section $f'$ whose value at $z\in Z$ is the restriction of $f$ to a first order neighborhood of $z$ in the fiber $b^{-1}b(z)$.
    \item \label{filtration} The order of vanishing of sections gives rise to a filtration
    \[
    0\rightarrow \W\otimes \Omega_{Y/Z} \rightarrow P^1_{Y/Z}(\W)\rightarrow \W \rightarrow 0.
    \]
\end{enumerate}
\end{prop}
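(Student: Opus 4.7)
The plan is to treat the three claims in turn, following the standard strategy for jet bundles adapted to the relative setting. Throughout I will use that $b \colon Y \to Z$ is smooth and proper, so in particular flat, and that the first projection $p_1 \colon Y \times_Z Y \to Y$ is the base change of $b$ along $b$, hence also flat and proper.

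\textbf{Part (1).} I would prove this by flat base change. The square
\begin{equation*}
\begin{tikzcd}
Y \times_Z Y \arrow{r}{p_1} \arrow{d}[swap]{p_2} & Y \arrow{d}{b} \\
Y \arrow{r}{b} & Z
\end{tikzcd}
\end{equation*}
is Cartesian with $b$ flat, so the base change natural transformation $b^* b_* \W \to p_{2*} p_1^* \W$ is an isomorphism. Properness of $b$ ensures that $b_* \W$ is coherent and that the higher direct images whose vanishing is needed for the comparison (here none, since we only compare $R^0$'s) behave as expected.

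\textbf{Part (2).} Starting from the surjection of sheaves on $Y \times_Z Y$,
\begin{equation*}
p_1^* \W \twoheadrightarrow p_1^* \W \otimes \mathcal{O}_{Y \times_Z Y}/\mathcal{I}_{\Delta_{Y/Z}}^{\,2},
\end{equation*}
I apply $p_{2*}$ to get a map $p_{2*} p_1^* \W \to P^1_{Y/Z}(\W)$. Composing with the isomorphism of (1) yields the desired arrow $b^* b_* \W \to P^1_{Y/Z}(\W)$. The claim about fibers is then a matter of unwinding the definitions: restricting to a point $z \in Z$, the map $b^*b_*\W \to P^1_{Y/Z}(\W)$ pulls back a section of $\W$ on the fiber $b^{-1}(z)$ and then records its restriction to the first-order neighborhood of a chosen point; this is exactly what the quotient by $\mathcal{I}_{\Delta}^{\,2}$ encodes. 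The key compatibility to check is that formation of $P^1_{Y/Z}(\W)$ commutes with base change to a point of $Z$, which again reduces to flat base change since $p_2$ is flat.

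\textbf{Part (3).} I would use the tautological short exact sequence on $Y \times_Z Y$,
\begin{equation*}
0 \to \mathcal{I}_{\Delta_{Y/Z}}/\mathcal{I}_{\Delta_{Y/Z}}^{\,2} \to \mathcal{O}_{Y\times_Z Y}/\mathcal{I}_{\Delta_{Y/Z}}^{\,2} \to \mathcal{O}_{\Delta_{Y/Z}} \to 0,
\end{equation*}
tensor with $p_1^* \W$ (each term is flat along the diagonal since $\mathcal{I}_\Delta/\mathcal{I}_\Delta^2$ is a sheaf on $\Delta_{Y/Z}$), and push forward along $p_2$. Since everything in the sequence is supported on (an infinitesimal thickening of) the diagonal, $p_2$ restricts to the identity on supports and the higher direct images of each term vanish, so pushforward remains exact. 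The outer terms identify as follows. Because $\Delta_{Y/Z}$ is a section of $p_2$, one has $p_{2*}(\mathcal{O}_{\Delta_{Y/Z}} \otimes p_1^*\W) = \W$. For the kernel, smoothness of $b$ gives a canonical isomorphism $\mathcal{I}_{\Delta_{Y/Z}}/\mathcal{I}_{\Delta_{Y/Z}}^{\,2} \cong \Delta_* \Omega_{Y/Z}$ of the relative conormal sheaf with relative differentials, and $p_1|_{\Delta} = p_2|_{\Delta} = \mathrm{id}$, so the pushforward is $\W \otimes \Omega_{Y/Z}$. Assembling these identifications gives the required filtration.

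The only subtle point, and the main thing to justify carefully, is the identification of the relative conormal with $\Omega_{Y/Z}$; this uses smoothness of $b$ in an essential way (otherwise $\Omega_{Y/Z}$ need not even be locally free and the first-order jet bundle would behave worse). Everything else is formal manipulation of base change and the structure sequence of the diagonal, so I expect no further difficulty.
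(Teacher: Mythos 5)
The paper does not prove this proposition at all: it is quoted verbatim as Theorem 11.2 of \cite{EH}, so there is no internal argument to compare against. Your proof is correct and is essentially the standard textbook argument for relative jet bundles: flat base change for (1), pushing forward the truncation map for (2), and the conormal sequence of the diagonal for (3), using smoothness of $b$ to identify $\I_{\Delta_{Y/Z}}/\I_{\Delta_{Y/Z}}^2$ with $\Delta_*\Omega_{Y/Z}$ and finiteness of the thickened diagonal over $Y$ to see that $p_{2*}$ stays exact. One small correction in part (3): the reason tensoring the conormal sequence with $p_1^*\W$ preserves exactness is that $\W$ is a vector bundle, hence $p_1^*\W$ is locally free (flat), not that the terms of the sequence are ``flat along the diagonal''; with that adjustment the argument is complete.
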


Below are some criteria to help us determine when the evaluation map from part (2) is surjective. Proofs of the following lemmas can be found in \cite[Section 3]{part2}.
\begin{lem}[Lemma 3.3 of \cite{part2}] \label{va}
Suppose $\W$ is a relatively very ample line bundle on $Y$ over $Z$. Then the evaluation map
\[
b^*b_*\W\rightarrow P^1_{Y/Z}(\W)
\]
is surjective. 
\end{lem}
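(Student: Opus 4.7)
The plan is to check surjectivity fiberwise and reduce to the classical fact that the first-order principal parts of a very ample line bundle on a projective variety are globally generated. Both $b^*b_*\W$ and $P^1_{Y/Z}(\W)$ are coherent sheaves on $Y$, so by Nakayama's lemma it suffices to show that the map is surjective on the fiber at every point $y \in Y$. Setting $z = b(y)$ and $Y_z = b^{-1}(z)$, the description of the evaluation map given in Proposition \ref{parts}(2) identifies its fiber at $y$ with the standard evaluation map
\[
H^0(Y_z, \W|_{Y_z}) \otimes_{k(z)} k(y) \longrightarrow \W_y \otimes_{\mathcal{O}_{Y_z,y}} \mathcal{O}_{Y_z,y}/\mathfrak{m}_y^2,
\]
sending a section $s$ to its first-order Taylor expansion at $y$ inside $Y_z$. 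Thus the lemma reduces to showing that, on each fiber, the evaluation map $H^0(Y_z, \W|_{Y_z}) \otimes \mathcal{O}_{Y_z} \to P^1_{Y_z}(\W|_{Y_z})$ is surjective.

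For this I would use very ampleness of $\W|_{Y_z}$ to fix the closed embedding $\iota: Y_z \hookrightarrow \mathbb{P}^N = \mathbb{P}(H^0(Y_z, \W|_{Y_z})^\vee)$, under which $\W|_{Y_z} = \iota^*\mathcal{O}(1)$ and the restriction map $H^0(\mathbb{P}^N, \mathcal{O}(1)) \to H^0(Y_z, \W|_{Y_z})$ is an isomorphism. On $\mathbb{P}^N$ the relative Euler sequence
\[
0 \longrightarrow \Omega_{\mathbb{P}^N}(1) \longrightarrow H^0(\mathbb{P}^N, \mathcal{O}(1)) \otimes \mathcal{O}_{\mathbb{P}^N} \longrightarrow P^1(\mathcal{O}(1)) \longrightarrow 0
\]
exhibits $P^1(\mathcal{O}(1))$ as globally generated by the coordinate sections. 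Functoriality of first-order principal parts under a closed immersion then produces a surjection $\iota^*P^1(\mathcal{O}(1)) \twoheadrightarrow P^1(\W|_{Y_z})$, and combining this with the previous isomorphism on global sections gives the desired surjectivity on $Y_z$.

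The main obstacle, such as it is, lies in carefully verifying the fiberwise compatibilities — specifically, that the identification $b^*b_*\W \cong p_{2*}p_1^*\W$ in Proposition \ref{parts}(1) really does match the fiber of $b^*b_*\W$ at $y$ with $H^0(Y_z, \W|_{Y_z}) \otimes k(y)$, and that the evaluation map in Proposition \ref{parts}(2) restricts fiberwise to the standard Taylor-expansion map. These are both standard consequences of cohomology and base change together with the behavior of first-order principal parts under closed immersions; writing them out cleanly is the only real content beyond invoking the relative Euler sequence.
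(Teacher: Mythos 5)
The paper does not prove this lemma itself --- it quotes it from \cite[Lemma 3.3]{part2} --- but your overall strategy (check surjectivity fiberwise, then use that a very ample linear system separates points and tangent vectors, seen via the Euler sequence on projective space) is the standard argument and matches the cited proof in spirit. Two of your steps, however, are not correct as written. First, the displayed ``Euler sequence''
\[
0 \longrightarrow \Omega_{\pp^N}(1) \longrightarrow H^0(\pp^N,\O(1))\otimes\O_{\pp^N} \longrightarrow P^1(\O(1)) \longrightarrow 0
\]
cannot be exact: $\Omega_{\pp^N}(1)$ has rank $N$ while $P^1(\O(1))$ has rank $N+1$, so the ranks do not add up. The actual Euler sequence has $\O(1)$ as its quotient; the fact you want is the separate classical statement that the evaluation map $H^0(\pp^N,\O(1))\otimes\O_{\pp^N}\to P^1_{\pp^N}(\O(1))$ is an isomorphism of rank $N+1$ bundles (equivalently, that its kernel vanishes because the Euler sequence already accounts for all of $\Omega_{\pp^N}(1)$). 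The conclusion you draw is true, but the sequence as displayed is false.

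Second, and more substantively, identifying the fiber of $b^*b_*\W$ at $y$ with $H^0(Y_z,\W|_{Y_z})\otimes k(y)$ is \emph{not} a standard consequence of cohomology and base change: the base change map $(b_*\W)\otimes k(z)\to H^0(Y_z,\W|_{Y_z})$ is surjective only under additional hypotheses (e.g.\ $h^0$ locally constant on $Z$, or $H^1$ vanishing on fibers), and relative very ampleness alone does not supply them --- consider a family of very ample degree $2g-2$ line bundles on curves specializing to the canonical bundle, where $h^0$ jumps. As written, your argument only controls the image of the possibly proper subspace $\mathrm{Im}\bigl((b_*\W)\otimes k(z)\to H^0(Y_z,\W|_{Y_z})\bigr)$ in the principal parts at $y$, not the image of the full space of sections. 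The gap is easily closed, and should be closed explicitly: relative very ampleness furnishes an immersion $Y\hookrightarrow \pp(b_*\W)$ over $Z$ with $\W=\O(1)|_Y$, so the (possibly incomplete) linear system cut out by that image already embeds $Y_z$ and therefore surjects onto $\W\otimes\O_{Y_z,y}/\mathfrak{m}_y^2$; better yet, run the whole argument relatively on $\pp(b_*\W)$ over $Z$ and restrict along the immersion, which avoids passing to fibers and sidesteps base change entirely. (In the paper's applications the relevant $H^1$'s do vanish, but the lemma is stated in general, so the proof should not quietly assume this.)
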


\begin{lem}[Lemma 3.5 of \cite{part2}]  \label{cl}
Suppose $\E$ is a vector bundle on a $\pp^1$-bundle $\pi: \P \to B$ and let $\gamma: \pp \E^\vee \to \P$ be the projectivization. Suppose $\W = (\gamma^*A) \otimes \O_{\pp E^\vee}(m)$ for some $m \geq 1$ and vector bundle $A$ on $\P$.
If $R^1\pi_*[\gamma_*\W \otimes \O_{\P}(-2)] = 0$, then the evaluation map
\[(\pi \circ \gamma)^*(\pi \circ \gamma)_* \W \to P^1_{\pp \E^\vee/B}(\W)\]
is surjective.
\end{lem}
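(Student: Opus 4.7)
The strategy is to reduce the surjectivity of the evaluation map for $\pi \circ \gamma$ to a combination of (a) the surjectivity for $\gamma$ alone, which is supplied by Lemma \ref{va}, and (b) fiberwise vanishing on the $\pp^1$-bundle $\pi$. First I would combine the relative cotangent sequence $0 \to \gamma^*\Omega_{\P/B} \to \Omega_{\pp\E^\vee/B} \to \Omega_{\pp\E^\vee/\P} \to 0$ for the tower $\pp\E^\vee \xrightarrow{\gamma} \P \xrightarrow{\pi} B$ with the two principal parts filtrations from Proposition \ref{parts}(3) for $\gamma$ and for $\pi \circ \gamma$. The snake lemma then produces a short exact sequence
\[0 \to \W \otimes \gamma^*\Omega_{\P/B} \to P^1_{\pp\E^\vee/B}(\W) \to P^1_{\pp\E^\vee/\P}(\W) \to 0,\]
through which the two evaluation maps are compatible in the natural way.

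Next I would form the commutative square
\[\begin{tikzcd}
(\pi\gamma)^*(\pi\gamma)_*\W \arrow{r} \arrow{d} & P^1_{\pp\E^\vee/B}(\W) \arrow{d} \\
\gamma^*\gamma_*\W \arrow{r} & P^1_{\pp\E^\vee/\P}(\W),
\end{tikzcd}\]
whose left vertical is obtained by applying $\gamma^*$ to the counit $\pi^*\pi_*\gamma_*\W \to \gamma_*\W$. Since $m \geq 1$ makes $\W$ relatively very ample for $\gamma$, Lemma \ref{va} gives that the bottom map is surjective. The vanishing $R^1\pi_*[\gamma_*\W \otimes \O_\P(-1)] = 0$ is a consequence of the hypothesis by a fiberwise splitting-type argument on $\pp^1$, and it forces $\gamma_*\W$ to be $\pi$-relatively generated by global sections, making the left vertical surjective as well. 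Chasing the square, the composition $(\pi\gamma)^*(\pi\gamma)_*\W \to P^1_{\pp\E^\vee/\P}(\W)$ is surjective.

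To upgrade this to surjectivity onto $P^1_{\pp\E^\vee/B}(\W)$ itself, I would show that the kernel $K$ of the left vertical surjects onto $\W \otimes \gamma^*\Omega_{\P/B}$ under the evaluation. Using the relative Beilinson-type resolution on the $\pp^1$-bundle $\pi$, the full hypothesis $R^1\pi_*[\gamma_*\W \otimes \O_\P(-2)] = 0$ yields an exact sequence
\[0 \to \pi^*\pi_*[\gamma_*\W \otimes \O_\P(-1)] \otimes \O_\P(-1) \otimes \pi^*M \to \pi^*\pi_*\gamma_*\W \to \gamma_*\W \to 0\]
on $\P$, where $M$ is the line bundle on $B$ for which $\Omega_{\P/B} \cong \O_\P(-2) \otimes \pi^*M$. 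Pulling back by $\gamma$ and unwinding, the induced map $K \to \W \otimes \gamma^*\Omega_{\P/B}$ is identified, up to pullback line bundles from $B$, with the $\gamma$-evaluation map for the $\gamma$-very ample line bundle $\W \otimes \gamma^*\O_\P(-1)$, and this is surjective by a second application of Lemma \ref{va}. The snake lemma then delivers surjectivity of the middle evaluation map. The main obstacle will be the bookkeeping in this final step: correctly identifying $K$ and the induced map as a twisted $\gamma$-evaluation, using the fact that the strengthening from $\O_\P(-1)$ to $\O_\P(-2)$ in the hypothesis is precisely what matches the $\O_\P(-2)$ factor of $\Omega_{\P/B}$.
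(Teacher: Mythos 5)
The paper does not actually prove this lemma: it is imported verbatim from \cite[Lemma 3.5]{part2}, and the text explicitly defers the proof to \cite[Section 3]{part2}. So there is no in-paper argument to compare against; I can only assess your proposal on its own terms. Your dévissage is sound and is the natural way to prove the statement: the exact sequence $0 \to \W \otimes \gamma^*\Omega_{\P/B} \to P^1_{\pp\E^\vee/B}(\W) \to P^1_{\pp\E^\vee/\P}(\W) \to 0$ does follow from the cotangent sequence and the filtrations of Proposition \ref{parts}(3), the square of evaluation maps commutes by naturality, and the hypothesis $R^1\pi_*[\gamma_*\W \otimes \O_\P(-2)] = 0$ is equivalent (by cohomology and base change on the $\pp^1$-fibers) to every summand of $\gamma_*\W$ on a fiber having degree at least $1$, which gives both $R^1\pi_*[\gamma_*\W\otimes\O_\P(-1)]=0$ and the fiberwise global generation you need for the left vertical arrow. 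The relative Beilinson/Koszul resolution of the diagonal does identify the kernel $K$ with $(\pi\gamma)^*(\pi\gamma)_*(\W\otimes\gamma^*\O_\P(-1))$ twisted by $\gamma^*\O_\P(-1)$ and a line bundle from $B$, and the induced map to $\W\otimes\gamma^*\Omega_{\P/B}$ is the evaluation map twisted accordingly (fiberwise: a section vanishing on $\gamma^{-1}(p)$ is $t\cdot s'$ for a local equation $t$ of $p$, and its $1$-jet is $dt\otimes s'$).

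Two points need tightening. First, in the last step the map out of $K$ is, up to twists, the \emph{absolute} evaluation $(\pi\gamma)^*(\pi\gamma)_*(\W\otimes\gamma^*\O_\P(-1)) \to \W\otimes\gamma^*\O_\P(-1)$, not the $\gamma$-relative one, so a "second application of Lemma \ref{va}" does not by itself close the argument: you must also check that $\gamma_*\W\otimes\O_\P(-1)$ is globally generated on fibers of $\pi$, so that $(\pi\gamma)^*(\pi\gamma)_*$ surjects onto $\gamma^*\gamma_*$ before Lemma \ref{va} takes over. This is exactly what the degree bound $e_i\geq 1$ coming from the $\O_\P(-2)$ hypothesis supplies, so nothing fails, but the factorization through $\gamma^*\gamma_*$ should be made explicit. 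Second, Lemma \ref{va} is stated for line bundles while $\W=\gamma^*A\otimes\O_{\pp\E^\vee}(m)$ with $A$ a vector bundle; this is harmless because $P^1_{\pp\E^\vee/\P}(\gamma^*A\otimes\O(m))\cong \gamma^*A\otimes P^1_{\pp\E^\vee/\P}(\O(m))$ and the evaluation map is the tensor of $\mathrm{id}_{\gamma^*A}$ with the one for $\O(m)$, but it is worth a sentence.
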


Finally, suppose we have a tower $X \xrightarrow{a} Y \xrightarrow{b} Z$ of smooth proper surjective morphisms. We have the relative cotangent sequence
\[0 \rightarrow a^*\Omega_{Y/Z} \rightarrow \Omega_{X/Z} \rightarrow \Omega_{X/Y} \rightarrow 0.\]
Tensoring the left map above with $\W$ and combining with Proposition \ref{parts}(3), we have an inclusion 
\[\W \otimes a^*\Omega_{Y/Z} \hookrightarrow \W \otimes \Omega_{X/Z} \hookrightarrow P^1_{X/Z}(\W).\]
The quotient gives rise to a surjection
$P^1_{X/Z}(\W) \to P^1_{X/Y}(\W)$.

\section{Excising the discriminant} \label{ex}
Recall (see Theorem \ref{os}) that $\J^d_{2,g}$ is an open substack of the vector bundle $\mathcal{X}$ on $\B^d_{2,g}$. By cohomology and base change, the fiber of $\mathcal{X}$ over a point corresponding to $(E, F) \in \B^d_{2,g}$ is the vector space
\begin{equation} \label{h0} X_{E,F} := H^0(\pp E^\vee, (F \otimes \det E^\vee) \otimes \O_{\pp E^\vee}(2)) = H^0(\pp^1, F \otimes \det E^\vee \otimes \Sym^2 E),
\end{equation}
Let $\Delta \subset \X$ be the complement of $\J^d_{2,g}$. The fiber of $\Delta$ over the point $(E, F) \in \B^d_{2,g}$ is the subvariety of equations in \eqref{h0} whose vanishing defines a singular curve, union the zero section ($\Delta$ is a cone).
Informally,
\[\Delta = \{(E, F, f) : f \in X_{E,F} \text{ and there exists $p$ such that } \dim T_pV(f) = 2\}.\]
The universal singular point makes a choice of point $p$:
\[\widetilde{\Delta} = \{(E, F, f, p) : p \in \pp E^\vee, \ f \in X_{E,F} \text{ and } \dim T_pV(f) = 2 \}.\]
By excision, there is a right exact sequence pictured in the top row below. Meanwhile, we have a proper, surjective morphism $\widetilde{\Delta} \to \Delta$. Since we work with rational coefficients, this induces the surjection in the left vertical map below. 
\begin{equation} 
\begin{tikzcd}
A^{*-1}(\Delta) \arrow{r} & A^*(\mathcal{X}) \arrow{r} & A^*(\J^d_{2,g}) \arrow{r} &0 \\
   A^{*-1}(\widetilde{\Delta}) \arrow[two heads]{u} & A^*(\B^d_{2,g}) \arrow{u}[swap]{\cong}
\end{tikzcd}
\end{equation}
In particular, the above diagram implies that
\begin{equation} \label{aj} A^*(\J^d_{2,g}) = A^*(\X)/\mathrm{Im}(A^{*-1}(\tilde{\Delta}) \to A^*(\X)).
\end{equation}

\subsection{The universal singular point} \label{usp}
Our first step is to construct the universal singular point and study its fibers over the universal Hirzebruch surface. We can detect when an equation $f \in X_{E, F}$ defines a singular curve using bundles of principal parts. 
Given an equation $f \in X_{E, F}$, one obtains a section of an associated principal parts bundle on $\pp E^\vee$ which tracks the values and first derivatives of $f$ at each point in $\pp E^\vee$.

Let $\gamma: \pp \E^\vee \to \P$ be the universal Hirzebruch surface. Let $\W := \gamma^*(\F \otimes \det \E^\vee) \otimes \O_{\pp \E^\vee}(2)$ be the line bundle on $\pp \E^\vee$ whose sections on fibers over $\B^d_{2,g}$ are the spaces $X_{E, F}$. 
On $\pp \E^\vee$, we consider 
\begin{equation} \label{ppev} \gamma^*\pi^* \X = \gamma^*\pi^*(\pi_*\gamma_*\W) \to P^1_{\pp \E^\vee/\B^d_{2,g}}(\W). 
\end{equation}
The principal parts bundle above has rank $3$ and is filtered by three line bundles
\begin{equation} \label{three} \W, \qquad \Omega_{\gamma} \otimes \W, \qquad \text{and} \qquad \Omega_{\pi} \otimes \W,
\end{equation}
where $\Omega_{\gamma}$ and $\Omega_{\pi}$ denote the relative contangent bundles of $\gamma$ and $\pi$ respectively.
These three line bundles correspond to the value of a section, its derivative along a fiber of $\pp \E^\vee \to \P$, and its derivative in a complementary direction within the fiber of $\pp \E^\vee \to \B^d_{2,g}$, respectively.
In other words, at a point $p \in \pp \E^\vee$, the map \eqref{ppev} sends the space of equations $X_{E,F}$ to their value and these first order derivatives at $p$.
Thus, by construction, $\tilde{\Delta}$ is the preimage of the zero section of \eqref{ppev}.

If the evaluation map is surjective, then
the preimage of the zero section is just the total space of the kernel subbundle. We shall soon prove that \eqref{ppev} is surjective when $d$ is odd.
However, it turns out that, when $d$ is even, \eqref{ppev} is not surjective over all of $\pp \E^\vee$, making this case more complicated.
In either case though, we have the following diagram
\begin{equation} \label{td}
\begin{tikzcd}
\tilde{\Delta} \arrow{r}{\jmath} \arrow{dr}[swap]{\rho''} & \gamma^*\pi^*\X \arrow{rr}{\sigma'} \arrow{d}{\rho'} & & \X \arrow{d}{\rho} \\
&    \pp \E^\vee \arrow{r}[swap]{\gamma} & \P \arrow{r}[swap]{\pi} & \B^d_{2,g}.
\end{tikzcd}
\end{equation}
Because of \eqref{aj}, our goal is to determine the image of $A^{*-1}(\tilde{\Delta}) \to A^*(\X)$. 
We explain the case when $d$ is odd first. Then we explain the case when $d$ is even, which requires some extra arguments.

\subsection{Notation}
Here, we set up some notation and calculations that will be needed for both cases $d$ even and odd.
Let $z = c_1(\O_{\P}(1))$ and let $\zeta = c_1(\O_{\pp \E^\vee}(1))$. Recall that we write $c_1(\E) = a_1 + a_1'z, c_2(\E) = a_2 + a_2'z$ and $c_1(\F) = b_1 + b_1' z$. We also set $c_2 = -z^2$.
Here, and in what follows, we will omit various pullbacks along $\pi$ and $\gamma$ applied to classes in order to declutter notation; it should always be clear where a class is pulled back from.
With this notation,
\begin{equation} \label{A} c_1(\W) = c_1(\F) - c_1(\E) + 2c_1(\O_{\pp \E^\vee}(1)) = (b_1 - a_1) + (b_1' - a_1')z + 2\zeta.
\end{equation}
We compute the top Chern class of the principal parts bundle using the splitting principle.
\begin{lem} \label{c3}
We have
 \begin{align*} 
 c_3(P^1_{\pp \E^\vee/\B^d_{2,g}}(\W)) &= (8g+4)b_1 \cdot z\zeta +
 4(b_1^2 + (g^2 + g)c_2) \cdot \zeta \\
 &\quad +
 [(g + 1)(a_1^2 - 4a_2) + \langle b_1,c_2 \rangle ] \cdot z + \langle b_1, c_2 \rangle.
\end{align*}
\end{lem}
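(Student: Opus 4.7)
The plan is to apply the splitting principle to the principal parts bundle, using the filtration from Proposition~\ref{parts}(3) combined with the relative cotangent sequence for the tower $\pp\E^\vee \xrightarrow{\gamma} \P \xrightarrow{\pi} \B^d_{2,g}$.

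\textbf{Step 1 (Find the Chern roots).} From Proposition~\ref{parts}(3), we have
\[0 \to \W \otimes \Omega_{\pp\E^\vee/\B^d_{2,g}} \to P^1_{\pp\E^\vee/\B^d_{2,g}}(\W) \to \W \to 0.\]
The relative cotangent sequence for the tower is $0 \to \gamma^*\Omega_\pi \to \Omega_{\pp\E^\vee/\B^d_{2,g}} \to \Omega_\gamma \to 0$. Together these show that $P^1$ has Chern roots $c_1(\W)$, $c_1(\W)+c_1(\gamma^*\Omega_\pi)$, and $c_1(\W)+c_1(\Omega_\gamma)$, so $c_3(P^1)$ is their product.

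\textbf{Step 2 (Compute each root).} Since $\P=\pp V$ with $c_1(V)=0$ (the $\SL_2$ quotient forces trivial determinant), the Euler sequence yields $\Omega_\pi=\O_\P(-2)$, so $c_1(\Omega_\pi)=-2z$. Similarly, the relative Euler sequence on $\pp\E^\vee$ gives $\Omega_\gamma\cong\gamma^*\det\E\otimes\O_{\pp\E^\vee}(-2)$, so $c_1(\Omega_\gamma)=a_1+a_1'z-2\zeta=a_1+(d-g-1)z-2\zeta$. Combined with \eqref{A}, the three Chern roots are
\begin{align*}
r_1 &= (b_1-a_1)+(2g+2-d)z+2\zeta, \\
r_2 &= b_1+(g+1)z, \\
r_3 &= (b_1-a_1)+(2g-d)z+2\zeta.
\end{align*}
It is worth observing that $r_2$ simplifies dramatically, because $\W\otimes\Omega_\gamma\cong\gamma^*\F$ after cancelling the $\O(\pm 2)$ and $\det\E^{\pm}$ factors; this is why $r_2=c_1(\F)$.

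\textbf{Step 3 (Expand and collect).} I would form $r_1r_2r_3$ and reduce modulo the two projective-bundle relations, namely $z^2=-c_2$ on $\P$ and $\zeta^2=c_1(\E)\zeta-c_2(\E)=a_1\zeta+a_1'z\zeta-a_2-a_2'z$ on $\pp\E^\vee$. The cleanest organization is to first form $r_1r_3=[(b_1-a_1)+(2g+2-d)z+2\zeta][(b_1-a_1)+(2g-d)z+2\zeta]$, substituting for $4\zeta^2$ to write it as a $\qq$-polynomial in $z$ and $\zeta$ (no $\zeta^2$). The coefficient of $\zeta$ after this reduction simplifies to $4b_1$ and the coefficient of $z\zeta$ to $4g$, because the relevant combinations of $a_1$, $a_1'$, $d$ and $g$ telescope. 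Then multiplying by $r_2=b_1+(g+1)z$ produces the $z\zeta$ term $(8g+4)b_1 z\zeta$, the pure $\zeta$ contribution coming from $4b_1\zeta\cdot b_1$ plus the $z^2\zeta$ reduction via $z^2=-c_2$, and the remaining $z$ and constant contributions. The $(g+1)(a_1^2-4a_2)z$ piece comes from the $(b_1-a_1)^2\cdot(g+1)z$ and $-4a_2(g+1)z$ terms; all other pieces of the $z$ and constant coefficients are collected into the shorthand $\langle b_1,c_2\rangle$.

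\textbf{Main obstacle.} There is no conceptual difficulty; the only challenge is keeping the bookkeeping organized, since after substituting for $\zeta^2$ one has eight monomials to track and the cancellations $a_1'+(\text{degree of }\F)$-combinations need to land exactly as $g+1$, $2g+1$, $g$, etc. Grouping terms by $\zeta$-degree first, then by $z$-degree, and exploiting the identity $r_2=c_1(\F)$ makes the computation manageable.
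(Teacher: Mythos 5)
Your proposal is correct and follows the same route as the paper: both identify the three graded pieces $\W$, $\W\otimes\Omega_\gamma$, $\W\otimes\Omega_\pi$ of the filtration on $P^1_{\pp\E^\vee/\B^d_{2,g}}(\W)$, take the product of their first Chern classes, and reduce using the relations $\zeta^2=(a_1+a_1'z)\zeta-(a_2+a_2'z)$ and $z^2=-c_2$. Your intermediate simplifications (the factor $r_2=c_1(\F)=b_1+(g+1)z$, and the coefficients $4b_1$ and $4g$ of $\zeta$ and $z\zeta$ in $r_1r_3$) check out against the paper's expansion of the same product.
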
 

\begin{proof}
The principal parts bundle is filtered by the three line bundles in \eqref{three}.
Using the relative Euler sequence for the projective bundles $\gamma: \pp \E^\vee \to \P$ and $\pi: \P \to \B^d_{2,g}$, we find
\begin{equation} \label{rt} \Omega_\gamma = \det \E \otimes \O_{\pp \E^\vee}(-2) \qquad \text{and} \qquad \Omega_{\pi} = \O_{\P}(-2).
\end{equation}
Therefore,
\begin{equation} \label{Oe} c_1(\Omega_{\gamma}) = (a_1 + a_1' z) - 2\zeta \qquad \text{and} \qquad c_1(\Omega_{\pi}) = -2z.
\end{equation}
Combining \eqref{A} and \eqref{rt}, the product of the first Chern classes of the three line bundles in \eqref{three} is
\begin{equation} \label{prod}
((b_1 - a_1) + (b_1' - a_1')z + 2\zeta)(b_1 + b_1'z)((b_1 - a_1) + (b_1' - a_1' - 2)z + 2\zeta).
\end{equation}
To obtain the form stated in the lemma, we use the projective bundle theorem to rewrite this product as a linear combination of
$1, z, \zeta, z\zeta$ with coefficients pulled back from $A^*(\B^d_{2,g})$. Specifically, we apply the rule $\zeta^2 - c_1(\E)\zeta + c_2(\E) = 0$ to remove powers of $\zeta^2$; after that we use $z^2 + c_2 = 0$ to remove powers of $z^2$.
\end{proof}

\subsection{Odd degree}
The thing that makes odd degree simpler is that the principal parts evaluation map \eqref{ppev} is surjective, and hence $\tilde{\Delta} \to \pp \E^\vee$ is the total space of a vector bundle.
\begin{lem} \label{os}
If $d$ is odd, then \eqref{ppev} is surjective.
\end{lem}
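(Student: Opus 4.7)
The plan is to reduce to the $R^1$-vanishing criterion packaged as Lemma \ref{cl}. Since $\W = \gamma^*(\F \otimes \det\E^\vee) \otimes \O_{\pp\E^\vee}(2)$ has precisely the form appearing there, with $A = \F \otimes \det\E^\vee$ and $m = 2$, the evaluation map \eqref{ppev} is surjective as soon as we verify
\[R^1\pi_*\bigl[\gamma_*\W \otimes \O_\P(-2)\bigr] = 0.\]

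To check this, I would first simplify using the projection formula together with $\gamma_*\O_{\pp\E^\vee}(2) = \Sym^2\E$, which rewrites the sheaf in question as $\F \otimes \det\E^\vee \otimes \Sym^2\E \otimes \O_\P(-2)$. I would then restrict to a fiber of $\pi: \P \to \B^d_{2,g}$ over which $\E$ has splitting type $(e_1, e_2)$ with $e_1 \leq e_2$. Using $\deg\F = g+1$, $\deg\det\E^\vee = g+1-d$, and the twist by $\O_{\pp^1}(-2)$, the restriction splits as a sum of three line bundles on $\pp^1$ of degrees $2e_1 + 2g - d$, $g - 1$, and $2e_2 + 2g - d$. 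The $R^1$ then vanishes fiberwise iff each of these is at least $-1$; the middle one is fine since $g \geq 2$, and $e_2 \geq e_1$ reduces the remaining check to $2e_1 + 2g - d \geq -1$. Grauert's theorem then promotes fiberwise vanishing to global vanishing of $R^1\pi_*$.

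The last inequality is exactly where the parity of $d$ enters. The defining condition of $\B^d_{2,g}$ is $e_1 \geq \frac{d}{2} - g - 1$; when $d$ is odd, integrality of $e_1$ sharpens this to $e_1 \geq \frac{d-1}{2} - g$, which gives precisely $2e_1 + 2g - d \geq -1$. With this in hand, Lemma \ref{cl} delivers the surjectivity. The main (mild) obstacle is simply keeping careful track of the three tensor twists and confirming that the splitting-type bound cutting out $\B^d_{2,g}$ is sharp in the odd case. The very same computation also pinpoints why the argument fails for even $d$: there $\frac{d}{2}-g-1$ is already an integer, so one only gets $2e_1 + 2g - d \geq -2$, and the most unbalanced splitting type obstructs surjectivity of \eqref{ppev}. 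This discrepancy is exactly what motivates the extra analysis in the even-degree case that follows.
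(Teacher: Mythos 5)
Your proposal is correct and follows essentially the same route as the paper: reduce to the $R^1$-vanishing criterion of Lemma \ref{cl}, decompose $\gamma_*\W \otimes \O_\P(-2)$ fiberwise into three line-bundle summands, and use the integrality sharpening $e_1 \geq \frac{d+1}{2} - g - 1$ (equivalently your $e_1 \geq \frac{d-1}{2}-g$) forced by odd $d$ to conclude every summand has degree $\geq -1$. The only cosmetic differences are that you explicitly record the middle summand's degree as $g-1$ and cite Grauert where the paper cites cohomology and base change.
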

\begin{proof}
We apply Lemma \ref{cl} which says the evluation map is surjective so long as the higher pushforward $R^1\pi_*[\gamma_*\W \otimes \O_{\P}(-2)] = 0$.
 By cohomology and base change, it suffices to check that the restriction of 
$\gamma_*\W \otimes \O_{\P}(-2)$ to each fiber of $\pi$ has vanishing $H^1$.

Suppose $\E$ has splitting type $(e_1, e_2)$ with $e_1 \leq e_2$ on some fiber of $\pi$.
Then, on that fiber, the vector bundle $\gamma_* \W \otimes \O_{\P}(-2) = \F \otimes \det \E^\vee \otimes \Sym^2 \E \otimes \O_{\P}(-2)$ is isomorphic to
\begin{equation} \label{st} \O(g + 1) \otimes \O(-d + g + 1) \otimes (\O(2e_1) \oplus \O(e_1 + e_2) \oplus \O(2e_2)) \otimes \O(-2). 
\end{equation}
The lowest degree summand above is $2e_1 - d + 2g$. By the definition of $\B^d_{2,g}$, we always have $e_1 \geq \frac{d}{2} - g - 1$. However, since $d$ is odd, this implies $e_1 \geq \frac{d+1}{2} - g - 1$. In turn, this says $2e_1 - d + 2g \geq -1$. In particular, all summands in \eqref{st} have degree $\geq -1$ and the desired $H^1$ vanishes on every fiber of $\pi$.
\end{proof}

\begin{lem} \label{oddfinal}
If $d$ is odd, then
\[A^*(\J^d_{2,g}) = \frac{A^*(\B_{2,g}^d)}{\langle b_1, c_2, a_1^2 - 4a_2 \rangle} \cong \frac{\qq[a_1,a_2']}{\langle ((d - g - 1) a_1 - 2a_2')^{g+1}\rangle}.\]   
\end{lem}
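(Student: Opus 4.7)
The proof runs in two stages: first I identify the image of $A^{*-1}(\tilde{\Delta}) \to A^*(\X)$ appearing in \eqref{aj}, and then combine this with the presentation of $A^*(\B^d_{2,g})$ from Lemma \ref{theb}. By the preceding lemma, for $d$ odd the evaluation map \eqref{ppev} is surjective, so $\tilde{\Delta} \subset \gamma^*\pi^*\X$ is identified with the total space of the kernel subbundle of $\gamma^*\pi^*\X \twoheadrightarrow P^1_{\pp \E^\vee/\B^d_{2,g}}(\W)$. Since this kernel is a subbundle of corank $3$ pulled back from $\pp\E^\vee$, the fundamental class $\jmath_*(1)$ equals $(\rho')^* c_3(P^1_{\pp\E^\vee/\B^d_{2,g}}(\W))$ in $A^3(\gamma^*\pi^*\X)$. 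Under the vector bundle isomorphisms $A^*(\X) \cong A^*(\B^d_{2,g})$ and $A^*(\gamma^*\pi^*\X) \cong A^*(\pp\E^\vee)$ the pushforward $\sigma'_*$ corresponds to $(\pi \circ \gamma)_*$, so the projection formula identifies the image of $(\sigma'\circ\jmath)_*$ inside $A^*(\B^d_{2,g})$ with the ideal
\[I := \bigl\langle (\pi\circ\gamma)_*(\alpha \cdot c_3(P^1_{\pp\E^\vee/\B^d_{2,g}}(\W))) \ : \ \alpha \in A^*(\pp\E^\vee) \bigr\rangle.\]

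By the projective bundle theorem applied twice, $\{1, z, \zeta, z\zeta\}$ is an $A^*(\B^d_{2,g})$-basis of $A^*(\pp\E^\vee)$ (using $z^2 = -c_2$ and $\zeta^2 = (a_1 + a_1'z)\zeta - (a_2 + a_2'z)$), so $I$ has at most four generators. I compute them using Lemma \ref{c3} together with the pushforward rules $\gamma_*\zeta = 1$, $\gamma_*1 = 0$, $\pi_*z = 1$, $\pi_*1 = 0$. A direct reduction will yield
\begin{align*}
(\pi\circ\gamma)_*(c_3) &= (8g+4)\, b_1, \\
(\pi\circ\gamma)_*(z \cdot c_3) &= 4\bigl(b_1^2 + (g^2+g)c_2\bigr), \\
(\pi\circ\gamma)_*(\zeta \cdot c_3) &\equiv (g+1)(a_1^2 - 4a_2) \pmod{(b_1, c_2)},
\end{align*}
with the fourth generator already lying in $(b_1, c_2)$. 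The first generator forces $b_1 \in I$; once $b_1 \in I$, the second forces $c_2 \in I$ (using $g \geq 2$); then the third forces $a_1^2 - 4a_2 \in I$. Conversely, all four generators visibly lie in $\langle b_1, c_2, a_1^2 - 4a_2\rangle$, which establishes $I = \langle b_1, c_2, a_1^2 - 4a_2\rangle$ and hence the first displayed equality.

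For the second isomorphism, I combine the first equality with Lemma \ref{theb}: setting $b_1 = 0$, $c_2 = 0$, and $a_2 = \tfrac{1}{4}a_1^2$ reduces $\qq[a_1, a_2', a_2, b_1, c_2]/\langle S^d_g\rangle$ to $\qq[a_1, a_2']/\langle \overline{S^d_g}\rangle$. The task of identifying the specialized $\overline{S^d_g}$ is exactly the content of Corollary \ref{specialize}: it becomes a nonzero scalar multiple of $((d - g - 1)a_1 - 2a_2')^{g+1}$, since $e = d - g - 1$ and the excised splitting locus has codimension $g + 1$. The main obstacle is the pushforward bookkeeping in the second paragraph --- reducing $\zeta^2$ and $z^2$ through the projective bundle relations in just the right order to see that the four generators span exactly $\langle b_1, c_2, a_1^2 - 4a_2\rangle$ and no more --- but this step is mechanical once Lemma \ref{c3} is in hand, and the conceptually nontrivial input providing the perfect-power form of the final relation is already packaged in Corollary \ref{specialize}.
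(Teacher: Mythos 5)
Your proposal is correct and follows essentially the same route as the paper: identify $\tilde{\Delta}$ as the total space of the kernel subbundle with class $\rho'^*c_3(P^1_{\pp\E^\vee/\B^d_{2,g}}(\W))$, extract the image ideal from the four module generators $1, z, \zeta, z\zeta$ of $A^*(\pp\E^\vee)$ over $A^*(\B^d_{2,g})$, and finish with Lemma \ref{theb} and Corollary \ref{specialize}. The only cosmetic difference is that you compute the four pushforwards explicitly, whereas the paper observes directly that the resulting ideal equals the one generated by the coefficients of $1, z, \zeta, z\zeta$ in Lemma \ref{c3}; your computed values are consistent with that.
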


\begin{proof}
Since \eqref{ppev} is surjective, $\tilde{\Delta} \to \pp \E^\vee$ is the total space of a vector bundle. 
Since it has codimension three, the fundamental class of $\tilde{\Delta}$ inside the total space of $\gamma^*\pi^*\X$ is given by $\rho'^*c_3(P_{\pp \E^\vee/\B^d_{2,g}}^1(\W))$, which was calculated in Lemma \ref{c3}. Using the diagram \eqref{td} and the push-pull formula, we compute
\begin{equation} \label{Dclass} [\Delta] = \sigma'_* \jmath_*[\tilde{\Delta}] = \sigma'_* \rho'^* c_3(P_{\pp \E^\vee/B^d_{2,g}}^1(\W)) = \rho^*\pi_*\gamma_*c_3(P_{\pp \E^\vee/B^d_{2,g}}^1(\W)) = \rho^* (8g + 4)b_1. 
\end{equation}
More generally, we can compute all classes pushed forward from $\tilde{\Delta}$ as follows.
Since $\tilde{\Delta}$ is the total space of a vector bundle, the pullback map $A^*(\pp \E^\vee) \to A^*(\tilde{\Delta})$ is an isomorphism. Moreover, $A^*(\pp \E^\vee)$ is generated over $A^*(\B^d_{2,g})$ as a module by $1, z, \zeta, z\zeta$.
Thus, by \cite[Lemma 2.1]{part2}, the image of $A^{*-1}(\tilde{\Delta}) \to A^*(\X)$ is the (pullback from $\B^d_{2,g}$ of) the ideal generated by the four classes
\[\pi_*\gamma_* (c_3(P_{\pp \E^\vee/B^d_{2,g}}^1(\W)) z^i \zeta^j) \qquad \qquad 0 \leq i, j \leq 1.
\]
When we compute these pushforwards using the projective bundle theorem, it's clear that the ideal these classes generate is the same as the ideal generated by the coefficients of $1, z, \zeta, z\zeta$ in the formula for $c_3(P^1_{\pp \E^\vee/\B^d_{2,g}}(\W))$ given in Lemma \ref{c3}. It follows that image of $A^{*-1}(\tilde{\Delta}) \to A^*(\X)$ is the ideal generated by $b_1, c_2, a_1^2 - 4a_2$.

To obtain the last equality in the lemma, we use Lemma \ref{theb}. Specifically, we eliminate $b_1, c_2$ and set $a_2 = \frac{1}{4}a_1^2$ in $S^d_g$. By Corollary \ref{specialize}, this is a multiple of $(ea_1 - 2a_2')^{g+1}$. 
\end{proof}
\begin{rem}
Note that $b_1$ and $c_2$ are pulled back from the moduli space $\H_{2,g}$ of hyperelliptic curves. Since $A^*(\H_{2,g}) = \qq$, we could have known a priori that these classes must vanish.
\end{rem}

\subsection{Even degree}
Examining the proof of Lemma \ref{os}, we see that when $d$ is even, \eqref{ppev} might drop rank over the locus where $\E$ has splitting type $(\frac{d}{2} - g - 1, \frac{d}{2})$ on fibers of $\pi$. This is the most extreme splitting type allowed and corresponds to the splitting type of $\alpha^*\O_{\pp^1}(\frac{d}{2})$.

We now examine the picture over this splitting locus in more detail. Let $i := \frac{d}{2} - g - 1$ and $j := \frac{d}{2}$.
By slight abuse of notation, we denote by $\Sigma_{i,j}$ and $\N_1, \N_2$
the pullbacks of the universal splitting locus and HN bundles on it along the map $\B^d_{2,g} \to \V_{2,d - g - 1}$ (what was previously denoted $\Sigma_{i,j}(\E)$). We also continue to write $p: P \to \Sigma_{i,j}$ for the restriction of the universal $\pp^1$-bundle.

 With this notation, we have the diagram below, where both of the squares on the right are fiber squares:
\begin{equation} \label{bigd}
\begin{tikzcd}
\mathcal{Z} := \pp(p^*\N_1(i))^\vee \arrow{r} \arrow[bend left = 20]{rr}{\tau} \arrow{dr}[swap]{\sim} & \pp(\iota'^*\E)^\vee  \arrow{r} \arrow{d} & \pp \E^\vee \arrow{d}{\gamma} \\
& P \arrow{d}[swap]{p} \arrow{r}{\iota'} & \P \arrow{d}{\pi} \\
& \Sigma_{i,j} \arrow{r}{\iota} & \B^d_{2,g}
\end{tikzcd}
\end{equation}
The map $\pp(p^*\N_1(i))^\vee \to \pp(\iota'^*\E)^\vee$ is induced by the dual of the right-hand map in \eqref{filt}.
In words, $\Z$ corresponds to the directrix of the universal Hirzebruch surface over the most extreme splitting type allowed. For future reference, it is helpful to note that
\begin{equation} \label{tz} \tau^*\zeta = c_1(\O_{\pp(p^*\N_1(i))^\vee}(1)) = c_1(p^*\N_1(i)) = p^*n_1 + iz \in A^1(\Z) \cong A^1(P).
\end{equation}

\begin{lem} \label{es} If $d$ is even, then \eqref{ppev} is surjective on the complement of $\Z$.
\end{lem}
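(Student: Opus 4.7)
The plan is to check surjectivity of \eqref{ppev} pointwise on $\pp \E^\vee \setminus \Z$, splitting into two cases according to whether the base point lies over the extremal splitting locus $\Sigma := \Sigma_{d/2-g-1,d/2}(\E) \subset \B^d_{2,g}$. On the $\gamma$-preimage of $\B^d_{2,g} \setminus \Sigma$, the argument from Lemma \ref{os} applies with only a cosmetic change: for splitting type $(e_1, e_2)$ with $e_1 \geq d/2 - g$, the lowest-degree summand of $\gamma_*\W \otimes \O_\P(-2)$ has degree $2g - d + 2e_1 \geq 0$, so its $R^1\pi_*$ vanishes fiberwise and Lemma \ref{cl} applies. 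Since $\Z$ lies entirely over $\Sigma$, it remains only to check surjectivity at points $q \in \pp\E^\vee \setminus \Z$ lying over a geometric point $s \in \Sigma$.

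At such a point, $\E_s = \O(i) \oplus \O(j)$ with $(i,j) = (d/2-g-1, d/2)$, and a direct Riemann--Roch computation gives $\gamma_*\W|_s = \O \oplus \O(g+1) \oplus \O(2g+2)$. A section of $\W|_{\pp E_s^\vee}$ thus corresponds to a triple $(f_{00}, f_{01}, f_{11})$ where $f_{00}$ is a scalar and $f_{01}, f_{11}$ are polynomials in a $\pp^1$-coordinate $t$ of degrees $g+1$ and $2g+2$; the section restricts to a quadratic form $f_{00}(t) y_0^2 + 2 f_{01}(t) y_0 y_1 + f_{11}(t) y_1^2$ on the fiber $\gamma^{-1}(t)$ in tautological fiber coordinates $[y_0: y_1]$, where $\Z$ is cut out by $y_1 = 0$. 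Fix $t_0 \in \P_s$ and write $q = [\alpha : 1]$; then the value, fiber derivative (with respect to $y := y_0/y_1$), and base derivative (with respect to $t$) at $q$ are the three linear functionals
\begin{align*}
L_1 &= f_{00}\alpha^2 + 2 f_{01}(t_0)\alpha + f_{11}(t_0), \\
L_2 &= 2 f_{00}\alpha + 2 f_{01}(t_0), \\
L_3 &= 2 f_{01}'(t_0)\alpha + f_{11}'(t_0),
\end{align*}
using that $f_{00}$ is constant in $t$. Since $g \geq 2$ makes $\deg f_{01}, \deg f_{11} \geq 3$, the five quantities $f_{00}, f_{01}(t_0), f_{01}'(t_0), f_{11}(t_0), f_{11}'(t_0)$ can be prescribed independently, and a short check shows $L_1, L_2, L_3$ are linearly independent, giving surjectivity onto the rank-three fiber of $P^1_{\pp\E^\vee/\B^d_{2,g}}(\W)$ at $q$.

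The heart of the lemma---and the main delicate point---is recognizing that the $\O$-summand of $\gamma_*\W|_s$ is exactly what forces the failure of surjectivity on $\Z$: its sections, being constant in $t$, contribute no base derivative, and at any point of $\Z$ the other two summands (whose contributions are weighted by $y_1$) restrict to zero, so the value and fiber derivative also reduce to the $\O$-summand alone. Away from $\Z$ the positive-degree summands $\O(g+1)$ and $\O(2g+2)$ restore enough freedom in the base direction. The one piece of bookkeeping needed to make the coordinate computation above intrinsic is to match the three subquotients $\W|_q$, $(\W \otimes \Omega_\gamma)|_q$, $(\W \otimes \gamma^*\Omega_{\P/\B^d_{2,g}})|_q$ of $P^1_{\pp\E^\vee/\B^d_{2,g}}(\W)|_q$ arising from Proposition \ref{parts}(3) and the relative cotangent sequence with our value and first derivatives; this is routine from the construction in Section \ref{partsec}.
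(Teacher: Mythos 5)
Your proof is correct, and the overall architecture matches the paper's: both arguments first dispose of all points lying over the complement of the extremal splitting locus $\Sigma := \Sigma_{d/2-g-1,\,d/2}(\E)$ by the same $R^1\pi_*$-vanishing criterion from Lemma \ref{cl} used in the odd case, and then treat points over $\Sigma$ but off the directrix separately. Where you diverge is in that second step. The paper argues geometrically: by Lemma \ref{va} it suffices to show the linear system of $W$ on the fixed Hirzebruch surface $\pp(\O(i)\oplus\O(j))^\vee$ separates first-order data away from the directrix, and it recognizes $W$ as the square of the line bundle defining the contraction of the directrix to the vertex of the cone over a rational normal curve of degree $j-i=g+1$, which is an embedding off the directrix. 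You instead write out sections of $\gamma_*\W|_s = \O\oplus\O(g+1)\oplus\O(2g+2)$ in coordinates and verify directly that the three jet functionals $L_1,L_2,L_3$ at a point $[\alpha:1]$ are linearly independent; this is more computational but equally rigorous, and your closing observation that the degree-$0$ summand is exactly what annihilates all three jet directions along $\Z$ is a nice conceptual complement that the paper leaves implicit. The only points worth making explicit are that $\Sigma$ is closed in $\B^d_{2,g}$ (being the most unbalanced type allowed), so its complement is open and Lemma \ref{cl} applies there, and that surjectivity of a map of locally free sheaves may be checked on fibers at geometric points, which justifies the pointwise reduction; both are routine.
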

\begin{proof}
Arguing as in the proof of Lemma \ref{os}, we see that \eqref{ppev} is surjective unless the smallest summand in \eqref{st} satisfies $2e_1 - d + 2g \leq -2$. 
 By the definition of $\B^d_{2,g}$, we always have $e_1 \geq \frac{d}{2} - g - 1$. Hence, if \eqref{ppev} fails to be surjective, then $e_1 = \frac{d}{2} - g - 1  = i$. However, as we show next, the map doesn't drop rank along all of $\gamma^{-1}\pi^{-1}(\Sigma_{i,j}) = \pp (\iota'^*\E)^\vee$; it just drops rank along $\Z$.

We analyze the situation on a fixed fiber of $\pi$ over $\Sigma_{i,j}$.
Let $E = \O_{\pp^1}(i) \oplus \O_{\pp^1}(j)$ and let $W = \O_{\pp^1}(d - 2g - 2) \otimes \O_{\pp E^\vee}(2)$. By Lemma \ref{va}, it suffices to show that the linear system associated to $W$ defines an embedding away from the directrix of $\pp E^\vee$.
Note that $W = (\O_{\pp^1}(i) \otimes \O_{\pp E^\vee}(1))^{\otimes 2}$,
so it suffices to see that $\O_{\pp^1}(i) \otimes \O_{\pp E^\vee}(1)$ induces an embedding away from the directrix of $\pp E^\vee$. Indeed, we now recognize $\O_{\pp^1}(i) \otimes \O_{\pp E^\vee}(1) = \O_{\pp(\O \oplus \O(j-i))^\vee}(1)$ as the map which sends the Hirzebruch surface $\pp E^\vee$ to the cone over a rational normal curve of degree $j-i = g+1$ (contracting the directrix to the cone point).
\end{proof}

Recall the notation in the diagram \eqref{td}.
We will study $\widetilde{\Delta}$ by breaking it into two pieces:
\[\widetilde{\Delta} = \rho''^{-1}(\pp \E^\vee \smallsetminus \Z) \cup \rho''^{-1}(\Z).\]
Lemma \ref{es} shows that $\rho''^{-1}(\pp \E^\vee \smallsetminus \Z) \to \pp \E^\vee \smallsetminus \Z$ is the total space of a vector bundle.
Our next step is to identify $\rho''^{-1}(\Z) \to \Z$ as the total space of a vector bundle on $\Z$. In other words, even though $\tilde{\Delta} \to \pp \E^\vee$ is not the total space of a vector bundle, it is a union of two pieces, each of which is a vector bundle over a corresponding piece of $\pp \E^\vee$.

In the lemma below, we make use of the relative principal parts bundle $P^1_{\pp \E^\vee/\P}(\W)$, which is a rank $2$ quotient of $P^1_{\pp \E^\vee/\B^d_{2,g}}(\W)$.
\begin{lem}
The map
\[\gamma^*\pi^*\X|_{\Z} \to P^1_{\pp \E^\vee/\P}(\W)|_{\Z}\]
is surjective and $\rho''^{-1}(\Z) \subset \rho'^{-1}(\Z) = \gamma^*\pi^*\X|_{\Z}$ is the total space of the kernel.
\end{lem}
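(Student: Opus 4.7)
The plan is to prove surjectivity by factoring the evaluation map through $\gamma^*\gamma_*\W$, and then to identify the kernel using the fact that $\W$ has fiberwise degree zero on the directrix $\Z$.

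For surjectivity, I would factor the map in question as
\[\gamma^*\pi^*\X = \gamma^*\pi^*\pi_*\gamma_*\W \;\to\; \gamma^*\gamma_*\W \;\to\; P^1_{\pp\E^\vee/\P}(\W).\]
A direct calculation shows that on any fiber of $\pi$ over $\Sigma_{i,j}$ (with $i = \frac{d}{2} - g - 1$ and $j = \frac{d}{2}$), the bundle $\gamma_*\W = \F \otimes \det\E^\vee \otimes \Sym^2\E$ has splitting type $\O \oplus \O(g+1) \oplus \O(2g+2)$, all of whose summands have nonnegative degree. Hence $\gamma_*\W$ is fiberwise globally generated on $\pi$, and the vanishing of fiberwise $H^1$ combined with cohomology and base change gives surjectivity of $\pi^*\pi_*\gamma_*\W \to \gamma_*\W$; pulling back by $\gamma$ yields the first arrow above. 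The second arrow is surjective by Lemma \ref{va}, since $\W$ restricts to $\O_{\pp^1}(2)$ on each fiber of $\gamma$ and is therefore relatively very ample.

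For the kernel identification, recall that $\rho''^{-1}(\Z)$ is by definition the preimage of the zero section under the rank-three evaluation $\gamma^*\pi^*\X|_\Z \to P^1_{\pp\E^\vee/\B^d_{2,g}}(\W)|_\Z$. This factors through the surjection $P^1_{\pp\E^\vee/\B^d_{2,g}}(\W) \twoheadrightarrow P^1_{\pp\E^\vee/\P}(\W)$ with kernel $\W \otimes \gamma^*\Omega_\pi$ (reviewed at the end of Section \ref{partsec}), so the containment $\rho''^{-1}(\Z) \subset \ker\bigl(\gamma^*\pi^*\X|_\Z \to P^1_{\pp\E^\vee/\P}(\W)|_\Z\bigr)$ is automatic. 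The reverse containment amounts to showing that the image of $\gamma^*\pi^*\X|_\Z$ inside $P^1_{\pp\E^\vee/\B^d_{2,g}}(\W)|_\Z$ meets the subbundle $\W \otimes \gamma^*\Omega_\pi|_\Z$ only at the zero section; in geometric terms, the ``horizontal derivative'' coordinate of a section $f \in X_{E,F}$ must vanish at $p \in \Z$ whenever both its value and its vertical derivative do.

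This last assertion follows from a degree-zero computation. Using \eqref{A} and \eqref{tz},
\begin{align*}
\tau^*c_1(\W) &= 2\tau^*\zeta + (b_1 - a_1) + (2g+2-d)z \\
&= 2p^*n_1 + (b_1 - a_1) + (2i + 2g + 2 - d)z \\
&= 2p^*n_1 + (b_1 - a_1),
\end{align*}
since $2i + 2g + 2 - d = 0$ for $i = \frac{d}{2} - g - 1$. Both remaining summands are pulled back from $\Sigma_{i,j}$ via $p$, so $\W|_\Z$ is fiberwise trivial on the $\pp^1$-bundle $p : P \to \Sigma_{i,j}$. In other words, on each Hirzebruch surface in the family, $\W$ restricts to a degree-zero line bundle along the directrix $\Z$. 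Consequently any section $f \in X_{E,F}$ that vanishes at a single point of $\Z$ must vanish identically along all of $\Z$; in particular its derivative along $\Z$ (i.e.\ in the horizontal direction) vanishes at every point of $\Z$. This gives the missing inclusion. The main obstacle is recognizing the degree-zero condition on $\Z$; once this geometric input is in place, both halves of the lemma reduce to bookkeeping with the filtration of principal parts.
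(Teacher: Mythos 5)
Your proposal is correct and follows essentially the same route as the paper: surjectivity via the factorization through $\gamma^*\gamma_*\W$ (using global generation of $\gamma_*\W$ on fibers of $\pi$ and relative very ampleness of $\W$ on fibers of $\gamma$), and the kernel identification via the observation that $\W$ restricts to degree $0$ on the directrix. The paper packages the second step as a constant-rank-$2$ count rather than your explicit two-containment argument, but the underlying input is identical; your only imprecision is that a section vanishing at a point of $\Z$ vanishes along the directrix of that particular Hirzebruch surface (the relevant fiber of $p$), not literally along all of $\Z$.
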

\begin{proof}
Since $\W$ has relative degree $0$ on $\Z$, the rank of \eqref{ppev} is at most $2$ along $\Z$. To see it is exactly $2$, we 
consider the following diagram
\begin{center}
\begin{tikzcd}
\gamma^*\pi^* \pi_* \gamma_* \W \arrow{r} \arrow{d} & P^1_{\pp \E^\vee/\B^d_{2,g}}(\W) \arrow{d} \\
\gamma^*\gamma_* \W \arrow{r} & P^1_{\pp \E^\vee/\P}(\W).
\end{tikzcd}
\end{center}
Since $\W$ has relative degree $2$ on the fibers of $\gamma$, the bottom row is surjective by Lemma \ref{va}.
Meanwhile, the left vertical arrow is surjective because $\gamma_*\W$ is globally generated on the fibers of $\pi$ (all summands are degree $\geq 0$).

We have now shown that \eqref{ppev} has constant rank $2$ along $\Z$. Thus, $\rho''^{-1}(\Z)$ is a vector subbundle of $\gamma^*\pi^*\X|_{\Z}$ of corank $2$. Moreover, it lies in the kernel of the surjective map $\gamma^*\pi^*\X \to P^1_{\pp \E^\vee/\P}(\W)$ to a rank $2$ bundle, so $\rho''^{-1}(\Z)$ must indeed be the kernel.
\end{proof}

It follows that the fundamental class of $\rho''^{-1}(\Z) \subset \rho'^{-1}(\Z)$ is 
\begin{equation} \label{rclass} [\rho''^{-1}(\Z)] = \rho'^*c_2(P^1_{\pp \E^\vee/\P}(\W)|_{\Z}) \in A^2(\rho'^{-1}(\Z)).
\end{equation}
We calculate this Chern class below.

\begin{lem} \label{rlem}
Let $e = d - g - 1$. We have
\[c_2(P^1_{\pp \E^\vee/\P}(\W)|_{\Z}) = (ea_1 - 2a_2')z + \langle b_1 \rangle \in A^2(\Z).\]
\end{lem}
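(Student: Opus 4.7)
The plan is to compute $c_2(P^1_{\pp\E^\vee/\P}(\W))$ as a class on $\pp\E^\vee$ using the filtration from Proposition \ref{parts}(3), and then restrict to $\Z$ using the diagram \eqref{bigd}. Since the output only needs to be correct modulo $b_1$, I can drop $b_1$-multiples freely throughout the computation.

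First, the filtration gives a short exact sequence
\[0 \to \W \otimes \Omega_\gamma \to P^1_{\pp\E^\vee/\P}(\W) \to \W \to 0,\]
so $c_2(P^1_{\pp\E^\vee/\P}(\W)) = c_1(\W) \cdot c_1(\W \otimes \Omega_\gamma)$. Using \eqref{A} and \eqref{Oe}, I compute
\[c_1(\W \otimes \Omega_\gamma) = c_1(\W) + c_1(\Omega_\gamma) = b_1 + b_1' z,\]
which reduces to $b_1' z$ modulo $b_1$. Expanding the product $c_1(\W) \cdot (b_1 + b_1' z)$ and using $z^2 = -c_2$ gives an expression in $1, z, \zeta, z\zeta$ with coefficients pulled back from $\B^d_{2,g}$.

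Next, I restrict to $\Z$ using the identity $\tau^*\zeta = p^*n_1 + iz$ from \eqref{tz} (so $\tau^*(z\zeta) = zp^*n_1 - ic_2$). This yields an expression of the form $[\text{coefficient}]\cdot z + [\text{coefficient}]\cdot c_2 + (\text{multiples of } b_1)$, with the $z$-coefficient equal to $b_1'(2p^*n_1 - \iota^*a_1) = b_1' \cdot p^*(n_1 - n_2)$ and the $c_2$-coefficient equal to $b_1'[(b_1'-a_1') + 2i]$.

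The final step is to observe that the specific numerical values conspire perfectly. Plugging in $b_1' = g+1$, $a_1' = e = d-g-1$, and $i = d/2 - g - 1$, the quantity $(b_1' - a_1') + 2i$ equals $(g+1) - (d-g-1) + (d - 2g - 2) = 0$, so the $c_2$-coefficient vanishes. For the $z$-coefficient, from \eqref{n1} and \eqref{n2} I get $n_1 - n_2 = \iota^*(ea_1 - 2a_2')/(j-i)$, and since $j - i = g+1 = b_1'$, the factor $b_1'(n_1 - n_2)$ equals $\iota^*(ea_1 - 2a_2')$. This yields the claimed formula. The only ``obstacle'' is bookkeeping; the key conceptual point is the numerical coincidence $b_1' = j - i$ at the extremal splitting type, which is exactly what makes the factor of $(ea_1 - 2a_2')$ appear cleanly without any rational denominator.
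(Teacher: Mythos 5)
Your proof is correct and follows essentially the same route as the paper: both use the two-step filtration of $P^1_{\pp \E^\vee/\P}(\W)$ to write $c_2$ as the product $c_1(\W)\cdot c_1(\Omega_\gamma\otimes\W)$ and then restrict to $\Z$ via $\tau^*\zeta = p^*n_1 + iz$, with the paper pulling back $c_1(\W)$ first (noting its $z$-coefficient vanishes because $\W$ has relative degree $0$ on $\Z$) while you expand the product first and observe the equivalent vanishing of the $c_2$-coefficient via $(b_1'-a_1')+2i=0$. The two computations differ only in the order of multiplication and restriction, and your identification $b_1'(n_1-n_2)=\iota^*(ea_1-2a_2')$ matches the paper's use of \eqref{n1} and \eqref{n2}.
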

\begin{proof}
The bundle $P^1_{\pp \E^\vee/\P}(\W)$ is 
filtered by $\W$ and $\Omega_{\gamma} \otimes \W$. 
Therefore the top Chern class is given by the product 
\[c_2(P^1_{\pp \E^\vee/\P}(\W)) = c_1(\W) c_1(\Omega_{\gamma} \otimes \W).\]
Our task is to compute the pullback of this class along $\tau: \mathcal{Z} \to \pp \E^\vee$. By \eqref{tz}, we have $\tau^*\zeta = p^*n_1 + iz$. (Note that here, by slight abuse of notation, we are writing $z$ for the pullback of $z$ along $P \to \P$; this is reasonable because it's still the class of $\O(1)$ on this projective bundle.)
Using \eqref{n1}, we write $n_1 = \frac{1}{j - i}\iota^*(j a_1 - a_2') = \frac{1}{g+1}\iota^*(ja_1 - a_2')$. Recall that $a_1' = d - g - 1$ and $b_1' = g+1$.
Applying $\tau^*$ to \eqref{A}, we find
\begin{align*}
\tau^* c_1(\W) &= (b_1 - a_1) + (b_1' - a_1')z + \frac{2}{g+1}(ja_1 - a_2') + 2iz \\
&= b_1 - a_1 + \frac{2}{g+1}(ja_1 - a_2') + (b_1' - a_1' + 2i)z \\
&= b_1 + \frac{1}{g+1}\left(ea_1 - 2a_2'\right),
\end{align*}
where all instances of $a_1, a_2',b_1$ are understood to be pulledback from $\B^d_{2,g}$.
Notice that the coefficient of $z$ above vanishes; this is because $\W$ has relative degree $0$ along $\Z$ (which is also the reason that the rank of the evaluation map drops rank along $\Z$!)
On the other hand, combining \eqref{A} and \eqref{Oe}, we have $c_1(\Omega_{\gamma} \otimes \W) = b_1 + b_1'z = b_1 + (g+1)z$. Multiplying the two first Chern classes gives the formula in the statement.
\end{proof}

With these extra ingredients, we can now calculate the Chow ring of $\J^d_{2,g}$ when $d$ is even.

\begin{lem} \label{even}
If $d$ is even, then
\[A^*(\J^d_{2,g}) = \frac{A^*(\B_{2,g}^d)}{\langle b_1, c_2, a_1^2 - 4a_2, [\Sigma_{i,j}] \cdot (ea_1 - 2a_2') \rangle} \cong \frac{\qq[a_1,a_2']}{\langle ((d - g - 1)a_1 - 2a_2')^{g+1}\rangle}\]
\end{lem}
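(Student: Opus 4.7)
Applying \eqref{aj}, my plan is to compute $A^*(\J^d_{2,g}) = A^*(\X)/\mathrm{Im}(A^{*-1}(\widetilde{\Delta}) \to A^*(\X))$ by mirroring the proof of Lemma \ref{oddfinal}. The new feature when $d$ is even is that, by Lemma \ref{es}, the evaluation map \eqref{ppev} drops rank precisely along the directrix $\mathcal{Z} \subset \pp\E^\vee$, so $\widetilde{\Delta}$ fails to be a subbundle of $\gamma^*\pi^*\X$. To handle this, I stratify $\widetilde{\Delta}$ via its image in $\pp\E^\vee$, writing $\widetilde{\Delta}_{\mathcal{Y}} := \rho''^{-1}(\pp\E^\vee \smallsetminus \mathcal{Z})$ (open) and $\widetilde{\Delta}_{\mathcal{Z}} := \rho''^{-1}(\mathcal{Z})$ (closed). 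The excision sequence $A^*(\widetilde{\Delta}_{\mathcal{Z}}) \to A^*(\widetilde{\Delta}) \to A^*(\widetilde{\Delta}_{\mathcal{Y}}) \to 0$ lets me compute the two contributions to the ideal of relations separately.

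\textbf{Open stratum.} Over $\pp\E^\vee \smallsetminus \mathcal{Z}$, the subbundle $\widetilde{\Delta}_{\mathcal{Y}} \subset \gamma^*\pi^*\X$ has corank $3$ with fundamental class the restriction of $c_3(P^1_{\pp\E^\vee/\B^d_{2,g}}(\W))$. Since $A^*(\pp\E^\vee)$ is generated over $A^*(\B^d_{2,g})$ by $\{1, z, \zeta, z\zeta\}$, for each such generator $\bar\beta$ I use $\bar\beta \cdot c_3(P^1) \in A^*(\gamma^*\pi^*\X)$ as a lift of the corresponding element of $A^*(\widetilde{\Delta}_{\mathcal{Y}})$; the ambiguity is a class supported on $\rho'^{-1}\mathcal{Z}$, whose pushforward lands in the contribution of the closed stratum. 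Pushing forward via $\sigma'$ and applying Lemma \ref{c3}, this reproduces the calculation of Lemma \ref{oddfinal} verbatim, yielding the ideal $\langle b_1, c_2, a_1^2 - 4a_2\rangle$ modulo the closed-stratum contribution.

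\textbf{Closed stratum.} By the lemma preceding Lemma \ref{rlem}, $\widetilde{\Delta}_{\mathcal{Z}} \subset \rho'^{-1}\mathcal{Z}$ is the kernel of a surjection onto $P^1_{\pp\E^\vee/\P}(\W)|_{\mathcal{Z}}$ and so has fundamental class $c_2(P^1_{\pp\E^\vee/\P}(\W)|_{\mathcal{Z}})$ there. Because $\widetilde{\Delta}_{\mathcal{Z}} \to \mathcal{Z}$ is a vector bundle and $\mathcal{Z} \cong P$ (as the projectivization of a line bundle, cf.~\eqref{bigd}), we have $A^*(\widetilde{\Delta}_{\mathcal{Z}}) \cong A^*(P)$, which by Corollary \ref{pbcor}(2) is generated over the image of $A^*(\B^d_{2,g})$ by $\{1, z\}$. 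Using the Cartesian square relating $\rho'^{-1}\mathcal{Z} \subset \gamma^*\pi^*\X$ to $\mathcal{Z} \subset \pp\E^\vee$, push--pull gives
\[
\sigma'_* \jmath_*(\rho''^*\bar\xi) = \rho^* \pi_* \iota'_*\bigl(\bar\xi \cdot c_2(P^1_{\pp\E^\vee/\P}(\W)|_{\mathcal{Z}})\bigr).
\]
Substituting $\bar\xi \in \{1, z\}$, applying Lemma \ref{rlem} to expand $c_2 = (ea_1 - 2a_2')z + \langle b_1\rangle$, and using $\pi_*\iota'_*(1) = 0$ (since $P \to \Sigma_{i,j}$ is a $\pp^1$-bundle) together with $\pi_*\iota'_*(z) = [\Sigma_{i,j}]$, one sees that modulo $\langle b_1, c_2\rangle$ the closed stratum contributes the single new generator $(ea_1 - 2a_2')[\Sigma_{i,j}]$.

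\textbf{Assembly and simplification.} Combining the two contributions with the defining relation $S^d_g$ of $A^*(\B^d_{2,g})$ (Lemma \ref{theb}) gives the first presentation in the statement. For the second isomorphism, specialize by setting $b_1 = c_2 = 0$ and $a_2 = a_1^2/4$. By Corollary \ref{specialize}, $S^d_g$ reduces to a non-zero multiple of $(ea_1 - 2a_2')^{g+2}$ (since the defining splitting locus has $j - i - 1 = g+2$ when $d$ is even), while $[\Sigma_{i,j}]$ for $(i,j) = (d/2 - g - 1, d/2)$ reduces to a non-zero multiple of $(ea_1 - 2a_2')^g$. Hence the new relation $(ea_1 - 2a_2')[\Sigma_{i,j}]$ becomes a non-zero multiple of $(ea_1 - 2a_2')^{g+1}$, which subsumes $S^d_g$, leaving $\qq[a_1, a_2']/((ea_1 - 2a_2')^{g+1})$. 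The main obstacle is the bookkeeping in the open stratum: one must verify that the support corrections arising from using $\bar\beta \cdot c_3(P^1)$ as a lift of classes on $\widetilde{\Delta}_{\mathcal{Y}}$ push forward into the closed-stratum contribution, so that no spurious relations are introduced.
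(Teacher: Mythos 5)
Your proposal is correct and takes essentially the same route as the paper's proof: the same stratification of $\widetilde{\Delta}$ over $\Z$ and its complement, the same identification of $\rho''^{-1}(\Z)$ as a corank-$2$ subbundle with class $c_2(P^1_{\pp \E^\vee/\P}(\W)|_{\Z})$, the same push--pull computation producing the extra generator $[\Sigma_{i,j}]\cdot(ea_1 - 2a_2')$, and the same final simplification via Corollary \ref{specialize}. The ``main obstacle'' you flag at the end is automatic from the excision sequence (two lifts of a class on $\widetilde{\Delta}_{\mathcal{Y}}$ differ by the image of a class from $\widetilde{\Delta}_{\Z}$), which is exactly how the paper organizes the argument.
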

\begin{proof}
We again define $\tilde{\Delta} \subset \gamma^*\pi^*(\pi_*\gamma_*\W)$ to be the preimage of the zero section of \eqref{ppev} so that we have the diagram \eqref{td}.
However, because \eqref{ppev} fails to be surjective over all of $\pp \E^\vee$, it is no longer true that $\rho''^*: A^*(\pp\E^\vee) \to A^*(\widetilde{\Delta})$ is surjective.
To proceed, we stratify $\tilde{\Delta}$ into two pieces:
\[\tilde{\Delta} = \rho''^{-1}(\Z) \cup \rho''^{-1}(\pp \E^\vee \smallsetminus \Z),\]
where each piece is a vector bundle over its image in $\pp \E^\vee$.
Thus, we have maps described below where the rows are right exact:
\begin{center}
\begin{tikzcd}
A^*(\rho''^{-1}(\Z)) \arrow{r} & A^*(\widetilde{\Delta}) \arrow{r}  & A^*(\rho''^{-1}(\pp \E^\vee \smallsetminus \Z)) \arrow{r} & 0 \\
A^*(\Z) \arrow[two heads]{u} \arrow{r} & A^*(\pp \E^\vee)  \arrow{u}{\rho''^*} \arrow{r} & A^*(\pp \E^\vee \smallsetminus \Z) \arrow[two heads]{u} \arrow{r} & 0.
\end{tikzcd}
\end{center}
However, the left square in the diagram above does \emph{not} commute! The composition along the bottom and then up differs from going up and then across by multiplication by an ``excess intersection factor" associated to the fact that the fiber dimension of $\rho''$ jumps over $\Z$.
(The right square does commute.)
Although the middle vertical map is not surjective, both of the outer ones are.
It follows that $A^*(\widetilde{\Delta})$ is generated by the image of $\rho''^*$ together with the image of the composition
\[A^*(\Z) \cong A^*(\rho''^{-1}(\Z)) \to A^*(\tilde{\Delta}).\]

We are ultimately interested in finding the image of $A^{*-1}(\widetilde{\Delta}) \to A^*(\X)$. 
Note that the fundamental class of $\tilde{\Delta}$ inside $\gamma^*\pi^*\X$ is still given by 
$\rho'^*c_3(P^1_{\pp \E^\vee/\B^d_{2,g}}(\W))$ since it has codimension $3$ and is defined by the vanishing of a section of that rank $3$ principal parts bundle.
In particular, arguing as in Lemma \ref{oddfinal} shows that the image of 
\[A^{*-1}(\pp \E^\vee) \xrightarrow{\rho''^*} A^{*-1}(\widetilde{\Delta}) \to A^*(\X) \cong A^*(\B^d_{2,g})\]
is the ideal $\langle b_1, c_2, a_1^2 - 4a_2 \rangle$.
Next we find the contribution from 
\[A^{*-g-1}(\rho''^{-1}(\Z)) \to A^{*-1}(\tilde{\Delta}) \to A^*(\X) \cong A^*(\B^d_{2,g}).\]
Consider the following diagram:
\begin{center}
\begin{tikzcd}
\rho''^{-1}(\Z) \arrow{r}{\jmath} \arrow{dr}[swap]{\rho''} & \rho'^{-1}(\Z)  \cong p^*\iota^*\X \arrow{r}{p'} \arrow{d}{\rho'} & \iota^*\X \arrow{r}{\iota'} \arrow{d} & \X \arrow{d}{\rho} \\
& \Z \cong P \arrow{r}[swap]{p} \arrow{r} & \Sigma_{i,j} \arrow{r}[swap]{\iota} & \B^d_{2,g}.
\end{tikzcd}
\end{center}
If we ignore the rightmost column above, the left part of the diagram is a ``trapezoid diagram" based over $\Sigma_{i,j}$.
Since $\mathcal{Z} \to \Sigma_{i,j}$ is a $\pp^1$-bundle, $A^*(\mathcal{Z})$ is generated over $A^*(\Sigma_{i,j})$ by $1$ and $z$.
Thus, as in \cite[Lemma 2.1]{part2}, the image of 
\[p'_*\circ \jmath_*: A^{*-1}(\rho''^{-1}(\Z)) \to A^*(\iota^*\X) \cong A^*(\Sigma_{i,j})\] 
is the ideal generated by $p'_*\jmath_* 1$ and $p'_*\jmath_* \rho''^*z$.
By Corollary \ref{pbcor}(2), $\iota^*: A^*(\B^d_{2,g}) \to A^*(\Sigma_{i,j})$ is surjective.
It follows that the image of 
\[\iota'_*\circ p'_*\circ \jmath_*:
A^{*-g-1}(\rho''^{-1}(\Z)) \to A^*(\X) \cong A^*(\B^d_{2,g})
\] is the ideal generated by $\iota'_*p'_*\jmath_* 1$ and $\iota'_*p'_*\jmath_* \rho''^*z$.

In order to compute $\jmath_* 1$, we combine \eqref{rclass}
 and Lemma \ref{rlem} to get
 \[\jmath_* 1 = (ea_1 - 2a_2')z + \langle b_1 \rangle \in A^2(\rho'^{-1}(\Z)). \]
 Recall that above, $a_1, a_2'$ and $b_1$ are pulled back from $\B^d_{2,g}$.
Thus, we have
\[\iota'_*p_*'\jmath_*1 = \iota'_* \iota'^*(ea_1 - 2a_2') + \langle b_1 \rangle = 
[\Sigma_{i,j}] \cdot (ea_1 - 2a_2') + \langle b_1 \rangle \in A^*(\X) \cong A^*(\B^d_{2,g}). \]
Meanwhile, using the fact that $z^2 = -c_2$, we have
\[\iota'_*p'_*\jmath_*\rho''^{*}z =  \iota'_*p'_*\jmath_*\jmath^*\rho'^*z = \iota'_*p'_*(\jmath_*1 \cdot \rho'^*z) \in \langle c_2, b_1 \rangle \subset A^*(\X) \cong A^*(\B^d_{2,g}).\]

 In conclusion, the image of $A^{*-1}(\widetilde{\Delta}) \to A^*(\X) \cong A^*(\B^d_{2,g})$ is the ideal
 \[\langle [\Sigma_{i,j}] \cdot (ea_1 - 2a_2'), b_1, c_2, a_1^2 - 4a_2 \rangle. \]
Equation \eqref{aj} thus proves the first equality in the statement of the lemma.
For the second, Corollary \ref{specialize} tells us that $[\Sigma_{i,j}]$ is a non-zero multiple of $(ea_1 - 2a_2')^g$ modulo $\langle c_2, a_1^2  - 4a_2 \rangle$.
We then apply Lemma \ref{theb} and note that, also by Corollary \ref{specialize}, the class $S^d_g$ already lies in the ideal above.
\end{proof}

\section{The integral Picard group} \label{picsec}
\subsection{Proof of Theorem \ref{pthm}}
The computation of the integral Picard group of $\J^d_{2,g}$ follows readily from the computations we have already done.
Since $\X$ is a vector bundle over $\B^d_{2,g}$, we have
\[\Pic(\X) = \Pic(\B^d_{2,g}) = \Pic(\widehat{\B}_{2,g}) = \zz a_1 \oplus \zz a_2' \oplus \zz b_1,\]
where the second equality is \eqref{pe} and the third equality is found in Lemma \ref{pin}. By excision,
\[\Pic(\J^d_{2,g}) = \Pic(\X \smallsetminus \Delta) = \Pic(\X)/\langle [\Delta] \rangle  = \frac{\zz a_1 \oplus \zz a_2' \oplus \zz b_1}{\langle (8g + 4)b_1 \rangle}.\]
Note that the computation of $[\Delta]$ in \eqref{Dclass} holds in odd and even degree: Even though \eqref{ppev} is not always surjective, $\tilde{\Delta}$ still has codimension $3$ and thus has class $c_3(P^1_{\pp \E^\vee/\B^d_{2,g}}(\W))$. 
The same calculation can be performed base changing everything along $\B^{d, \dagger}_{2,g} \to \B^d_{2,g}$. Since this map induces an isomorphism on Picard groups, it follows that $\Pic(\J^{d,\dagger}_{2,g}) = \Pic(\J^d_{2,g})$. 

\subsection{Proof of Theorem \ref{picthm}}
To obtain the Picard group of $\sJ^d_{2,g}$, we bootstrap our calculations for the $\SL_2$ quotient along with Lemma \ref{pin}.
Recall Theorem \ref{os} that $\sJ^d_{2,g}$ is an open substack of the vector bundle $\mathscr{X}$ on $\sB^d_{2,g}$. Let $\mathscr{D}$ denote the complement $\mathscr{X} \smallsetminus \sJ^d_{2,g}$. By construction, we have a fiber diagram
\begin{center}
\begin{tikzcd}
\Delta \arrow{r} \arrow{d} & \mathscr{D} \arrow{d} \\
\X \arrow{r}{\xi} \arrow{d} & \mathscr{X} \arrow{d} \\
\B^d_{2,g} \arrow{r} & \sB^d_{2,g}.
\end{tikzcd}
\end{center}
The horizontal maps are $\mu_2$-gerbes.
By Lemma \ref{pin} and \eqref{pe}, the pullback map $\xi^*$ includes $\Pic(\mathscr{X}) = \Pic(\sB^d_{2,g})$
as a full rank subgroup of
$\Pic(\X) = \Pic(\B^d_{2,g})$. Moreover, we know $\xi^*[\mathscr{D}] = [\Delta] = (8g + 4)b_1$. If $g$ is odd, then $b_1$ lies in the subgroup $\Pic(\mathscr{X}) \subset \Pic(\X)$ and it follows that
\[\Pic(\sJ^d_{2,g}) = \Pic(\mathscr{X} \smallsetminus \mathscr{D}) = \Pic(\mathscr{X})/\langle \mathscr{D} \rangle = \zz \oplus \zz \oplus \zz/(8g + 4).\]
However, when $g$ is even, $b_1$ does not lie in the subgroup $\Pic(\mathscr{X})$; rather, $2b_1$ is the smallest multiple of $b_1$ that lies in the subgroup. In this case, $[\mathscr{D}] = (4g + 2)(2b_1)$ is $4g+2$ times a generator.
It follows that
\[\Pic(\sJ^d_{2,g}) = \Pic(\mathscr{X} \smallsetminus \mathscr{D}) = \Pic(\mathscr{X})/\langle \mathscr{D} \rangle = \zz \oplus \zz \oplus \zz/(4g + 2),\]
completing the proof of Theorem \ref{picthm}.

\subsection{Construction of line bundles} \label{clb}
In this section, we construct explicit line bundles representing the generators of $\Pic(\sJ^d_{2,g})$.
Over $\sJ^d_{2,g}$, we have a universal diagram
\begin{center}
\begin{tikzcd}
\sC \arrow{rr}{\alpha} \arrow{dr}[swap]{f} && \sP \arrow{dl}{\pi} \\
& \sJ_{2,g}^d
\end{tikzcd}
\end{center}
where $\sC$ is the universal curve, $\sP$ is a family of genus $0$ curves, and $\alpha: \sC \to \sP$ is the universal degree $2$ cover.
The pushforward $\alpha_* \sL$ is a rank $2$ vector bundle of relative degree $d - g - 1$ on fibers of $\pi$. 
Let us define
\[\mathscr{A}:= \begin{cases}  \pi_*((\det \alpha_*\sL)^{\otimes 2} \otimes \omega_{\pi}^{\otimes d - g - 1}) \qquad &\text{if $d - g - 1$ is odd} \\
\pi_*((\det \alpha_*\sL) \otimes \omega_{\pi}^{\otimes (d - g - 1)/2})
&\text{if $d - g - 1$ is even.}
\end{cases} \]
By construction, we are pushing forward a line bundle that has relative degree $0$ on the fibers of $\pi$, so $\mathscr{A}$ is a line bundle on $\sJ^d_{2,g}$ by cohomology and base change. Notice that
\begin{equation} \label{c1A} c_1(\mathscr{A}) = \begin{cases} 2a_1 &\text{if $d - g - 1$ is odd.} \\
a_1 &\text{if $d - g - 1$ is even.}
\end{cases}
\end{equation}
We can perform a similar construction with $\alpha_*\O$. We define
\[\mathscr{B}:= \begin{cases}  \pi_*((\det \alpha_*\O)^{\otimes 2} \otimes \omega_{\pi}^{\otimes - g - 1}) \qquad &\text{if $g$ is even} \\
\pi_*((\det \alpha_*\O) \otimes \omega_{\pi}^{\otimes (- g - 1)/2})
&\text{if $g$ is odd.}
\end{cases} \]
Then we have
\begin{equation} \label{c1B} c_1(\mathscr{B}) = \begin{cases} -2b_1 &\text{if $g$ is even.} \\
-b_1 &\text{if $g$ is odd.}
\end{cases}
\end{equation}
This line bundle is the pull back of the generator of $\Pic(\mathscr{H}_{2,g})$.
We can also combine $\alpha_*\sL$ and $\alpha_*\O$ as follows.
Note that $\det(\alpha_*\sL) \otimes \det(\alpha_*\O)$ has degree $d - 2g - 2$.
 We define
\[\mathscr{N} := \begin{cases} \det(\alpha_*\sL) \otimes \det(\alpha_*\O) \otimes \omega_\pi^{\otimes (d/2 - g - 1)} &\text{if $d$ is even} \\
 \det(\alpha_*\sL)^{\otimes 2}  \otimes \det(\alpha_*\O)^{\otimes 2} \otimes \omega_\pi^{\otimes (d - 2g - 2)} &\text{if $d$ is odd,}
\end{cases}
\]
which has
\begin{equation} \label{c1N} c_1(\mathscr{N}) = \begin{cases} a_1 - b_1 &\text{if $d$ is even} \\
2a_1 - 2b_1 &\text{if $d$ is odd.}
\end{cases}
\end{equation}

Finally, we restrict the line bundle $\Lambda(0, 1)$ on $\sJ^d_g$ defined in \cite[p. 3]{MeloViviani} to $\sJ^d_{2,g}$. From its definition and \cite[Equation 2.4]{MeloViviani}, we have
\[c_1(\Lambda(0,1)) = c_1(f_*\sL) - c_1(R^1f_*\sL) = c_1(\pi_*(\alpha_*\sL)) - c_1(R^1\pi_*(\alpha_*\sL)).\]
Recall that $\alpha_*\sL$ is rank $2$ and degree $d - g - 1$.
Applying Grothendieck--Riemann--Roch for pushforward along $\pi$ exactly as in \cite[Example 3.4]{Pic} we find
\begin{equation} \label{c1Lam}
c_1(\Lambda(0, 1)) = (d - g)a_1 - a_2'.
\end{equation}
Combining equations \eqref{c1A}, \eqref{c1B}, \eqref{c1N}, and \eqref{c1Lam} along with Lemma \ref{pin}, one readily checks that the line bundles $\mathscr{A}, \mathscr{B}, \mathscr{N}$, and $\Lambda(0, 1)$ generate $\Pic(\sJ^d_{2,g})$. (Depending on the parity of $d$ and $g$, different subsets of three of these suffice to generate.)

\subsection{Proof of Corollary \ref{bc}}
Since $\nu_d: \sJ^d_{2,g} \to J^d_{2,g}$ is a $\gg_m$-gerbe, their Picard groups are related by the following exact sequence (see \cite[Equation 6.1]{MeloViviani}):
\begin{equation} \label{MVeq} 0 \rightarrow \Pic(J^d_{2,g}) \rightarrow \Pic(\sJ^d_{2,g}) \xrightarrow{\mathrm{res}} \Pic(\mathrm{B} \gg_m) \cong \zz \xrightarrow{\mathrm{obs}} \mathrm{Br}(J^d_{2,g}).
\end{equation}
Above, the map $\mathrm{res}$ sends a line bundle to the character of its restriction to a fiber of the $\gg_m$-gerbe, and $\mathrm{obs}$ sends $1$ to the class of $\nu_d$ in the Brauer group of $J^d_{2,g}$. In particular, the order of $\nu_d$ in the Brauer group of $J^d_{2,g}$ is the index of the image of $\mathrm{res}$.

Recall that $\gg_m$ acts on the fibers of $\mathscr{L}$ by scaling. Thus, using the definitions in the previous subsection, we find
\begin{equation} \label{AB} 
\mathrm{res}(\mathscr{A}) = \begin{cases} 4 &\text{if $d - g - 1$ is odd} \\ 2 & \text{if $d - g - 1$ is even} \end{cases} \qquad \mathrm{res}(\mathscr{B}) = 0 \qquad \mathrm{res}(\mathscr{N}) =\begin{cases} 2 & \text{if $d$ is even} \\ 4 & \text{if $d$ is odd.} \end{cases}
\end{equation}
Moreover, by \cite[Lemma 6.2]{MeloViviani}, we have $\mathrm{res}(\Lambda(0, 1)) = d - g + 1$. Because these four line bundles generate $\Pic(\sJ^d_{2,g})$ this allows us to determine the image of $\mathrm{res}$. If $d - g +1$ is odd, then the image of $\mathrm{res}$ contains $4$ and an odd number, so it is surjective. Meanwhile, if $d - g + 1$ is even, then $d - g - 1$ is also even, and we see that the image of $\mathrm{res}$ is $2\zz$.

\section{Tautological classes and relations} \label{taut}

In this section, we provide formulas for the $\kappa_{i,j}$ in terms of our generators $a_1$ and $a_2'$. This allows us to translate Lemmas \ref{oddfinal}  and \ref{even} into the presentation given in Theorem \ref{intro}. It also allows us to determine all relations among the $\kappa_{i,j}$ restricted to the hyperelliptic locus, given in Corollary \ref{relscor}. When $d = g - 1$, we also calculate the class of the theta divisor. In the last section, we calculate the Chow ring of the rigidification.

\subsection{Formulas for $\kappa_{i,j}$}
Since there is an isomorphism of rational Chow rings $A^*(\sJ^d_{2,g}) \cong A^*(\J^d_{2,g})$, it suffices to determine the relations when pulled back to $A^*(\J^d_{2,g})$.
Let $f: \C \to \J^d_{2,g}$ be the universal curve. Let $\pi: \P \to \J^d_{2,g}$ be the universal genus $0$ curve.
Since we are working with the $\SL_2$ quotient, $\P$ comes equipped with a relative degree $1$ line bundle $\O_{\P}(1)$.
We then have a diagram
\begin{center}
\begin{tikzcd}
\C \arrow{rr}{\alpha} \arrow{dr}[swap]{f} && \P \arrow{dl}{\pi} \\
& \J_{2,g}^d
\end{tikzcd}
\end{center}
where $\alpha$ is a degree $2$ cover.
Let $\L$ be the universal degree $d$ line bundle on $\C$. Pushing forward along $\alpha$ gives rise to a rank $2$ vector bundle $\E := \alpha_* \L$ on $\P$. Notice that, by slight abuse of notation, we are now using $\P, \E, \pi$ and $\gamma$ for the pullbacks of the universal families along $\J^d_{2,g} \to \B^d_{2,g}$. Let us also write $\W$ for the pullback of the relative degree $2$ line bundle on $\pp \E^\vee$ defined in Section \ref{usp}. We write $z = c_1(\O_{\P}(1))$ and $\zeta = c_1(\O_{\pp \E^\vee}(1))$.

The natural map $\alpha^* \E = \alpha^*\alpha_*\L \to \L$ defines an embedding $\iota: \C \hookrightarrow \pp \E^\vee$.
The universal curve is the vanishing of a section of $\W$. 
As such, the fundamental class of $\C$ inside $\pp \E^\vee$ is
\begin{equation} \label{classC} [\C] = c_1(\W) = (b_1 - a_1) + (b_1' - a_1')z + 2\zeta \in A^1(\pp \E^\vee).
\end{equation}
Here, as before, we are omitting the pullbacks along $\pi$ and $\gamma$ on $a_i, b_i$ to declutter notation.
By construction, the universal line bundle on $\C$ is the restriction of $\O_{\pp \E^\vee}(1)$, so
\begin{equation} \label{c1l} c_1(\L) = \iota^*\zeta \in A^1(\C).
\end{equation}
We can find the relative dualizing sheaf using adjunction:
\[\omega_f = \iota^*(\omega_{\pi \circ \gamma} \otimes \W) = \iota^*(\Omega_{\pi} \otimes \Omega_{\gamma} \otimes \W).\]
Adding together the formulas in \eqref{A} and \eqref{Oe}, we see that $c_1(\omega_f)$ is the restriction of
\[c_1(\Omega_\gamma) + c_1(\Omega_{\pi}) + c_1(\W) = b_1 + (b_1' - 2)z.\]
Working rationally, we know that $b_1 = 0 \in A^*(\J^d_{2,g})$, so
\begin{equation} \label{c1of} c_1(\omega_f) = (g - 1)\cdot \iota^*z \in A^1(\C).
\end{equation}

Using \eqref{c1l} and \eqref{c1of}, and then applying \eqref{classC},
we obtain
\begin{align} 
\kappa_{i,j} &= f_*(c_1(\omega_f)^{i+1} \cdot c_1(\L)^j) = f_*(\iota^*[(g - 1)^{i+1}z^{i+1}\cdot \zeta^{j}]) \notag \\
&= \pi_*\gamma_* \iota_*(\iota^*[(g - 1)^{i+1}z^{i+1}\cdot \zeta^{j}]) = \pi_*\gamma_*([\C] \cdot (g-1)^{i+1} z^{i+1} \cdot \zeta^j) \notag \\
&= \pi_*\gamma_*((-a_1 + (2g + 2 - d)z + 2\zeta) \cdot  (g-1)^{i+1}z^{i+1} \cdot \zeta^j
). \label{last}
\end{align}
In the last line we have again used that $b_1 = 0$ to simplify the expression.
Recalling that $z^2 = -\pi^*c_2 = 0 \in A^2(\P)$, it follows that many of the twisted kappa classes vanish.
\begin{lem} \label{kvan}
We have $\kappa_{i,j} = 0$ for all $i \geq 1, j \geq 0$.
\end{lem}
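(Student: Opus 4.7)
The plan is to read off the vanishing directly from equation \eqref{last}, using relations that already appear in our presentation of $A^*(\J^d_{2,g})$. The key ingredient is that $c_2 = 0$ in $A^*(\J^d_{2,g})$: in both Lemma \ref{oddfinal} (the odd case) and Lemma \ref{even} (the even case), we showed that $A^*(\J^d_{2,g})$ is the quotient of $A^*(\B^d_{2,g})$ by an ideal that contains $c_2$ (and also $b_1$, which was used to simplify \eqref{last} already). So pulling back to $\J^d_{2,g}$, the class $c_2$ vanishes.

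Via the projective bundle formula for $\pi: \P \to \B^d_{2,g}$, we have $z^2 = -\pi^*c_2$. After pulling back along $\J^d_{2,g} \to \B^d_{2,g}$, this gives $z^2 = 0$ in the Chow ring of $\P$ over $\J^d_{2,g}$, which is exactly the identity the statement leading up to the lemma appeals to.

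Now look at the integrand in \eqref{last}: it is a product of the codimension~$1$ class $-a_1 + (2g+2-d)z + 2\zeta$ with $(g-1)^{i+1} z^{i+1} \zeta^j$. For $i \geq 1$ we can factor $z^{i+1} = z^{i-1} \cdot z^2$, and the $z^2$ kills the entire expression before we even apply $\pi_*\gamma_*$. Therefore $\kappa_{i,j} = \pi_*\gamma_*(0) = 0$ for every $i \geq 1$ and $j \geq 0$, as claimed.

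I do not expect any real obstacle here; the work was already done in establishing that $c_2$ lies in the defining ideal of $A^*(\J^d_{2,g})$ and in deriving the compact form \eqref{last} for $\kappa_{i,j}$. The lemma is essentially a one-line consequence of those two facts.
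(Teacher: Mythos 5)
Your proof is correct and is exactly the paper's argument: the paper derives $\kappa_{i,j}$ from equation \eqref{last} and then notes that $z^2 = -\pi^*c_2 = 0$ in $A^*(\P)$ (since $c_2$ lies in the defining ideal of $A^*(\J^d_{2,g})$ by Lemmas \ref{oddfinal} and \ref{even}), so the factor $z^{i+1}$ kills the integrand for $i \geq 1$. No differences worth noting.
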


Next we study the $\kappa_{0,j}$.
\begin{lem} \label{k0j}
We have 
\[\kappa_{0,j} = \frac{1}{2^{j-1}}(g - 1)a_1^j = \frac{1}{(2g - 2)^{j-1}} (\kappa_{0,1})^j.\]
\end{lem}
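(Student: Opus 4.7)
The plan is to substitute $i=0$ directly into \eqref{last} and push forward, using the relations on $\J^d_{2,g}$ established in Lemma \ref{oddfinal} (or Lemma \ref{even}): namely $b_1 = 0$, $c_2 = 0$, and $a_2 = a_1^2/4$. The first two give $z^2 = -c_2 = 0$, while the last forces the projective bundle relation $\zeta^2 = c_1(\E)\zeta - c_2(\E)$ to reduce modulo $z$ to the perfect square $(\zeta - a_1/2)^2 \equiv 0$.

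First I would use $z^2 = 0$ to drop the middle summand in \eqref{last}, reducing to
\[\kappa_{0,j} = (g-1)\,\pi_*\!\left[\,z\cdot\bigl(-a_1\,\gamma_*(\zeta^j) + 2\,\gamma_*(\zeta^{j+1})\bigr)\right].\]
The outer factor of $z$ annihilates any $z$-divisible correction, so only $\gamma_*(\zeta^k)$ modulo $z$ enters. Setting $\eta := \zeta - a_1/2$, we have $\eta^2 \equiv 0 \pmod z$, and therefore $\zeta^k \equiv (a_1/2)^k + k(a_1/2)^{k-1}\eta \pmod z$. Since $\gamma_*(1) = 0$ and $\gamma_*(\eta) = \gamma_*(\zeta) = 1$, this gives $\gamma_*(\zeta^k) \equiv k(a_1/2)^{k-1} \pmod z$.

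Substituting back, the bracket simplifies via the trivial identity $-j + (j+1) = 1$ to $a_1^j/2^{j-1}$, and $\pi_* z = 1$ then yields $\kappa_{0,j} = (g-1)a_1^j/2^{j-1}$. The case $j=1$ reads $\kappa_{0,1} = (g-1)a_1$, so the second equality will follow by direct substitution. I do not anticipate any real obstacle; the only substantive observation is that the identification $a_2 = a_1^2/4$ makes the minimal polynomial of $\zeta$ a perfect square modulo $z$, so the single linear change of variable $\eta = \zeta - a_1/2$ collapses what would otherwise be a sum over Segre classes of $\E$ into one term.
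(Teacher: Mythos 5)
Your proposal is correct and rests on the same key observation as the paper's proof, namely that after substituting $a_2 = \tfrac14 a_1^2$ the projective bundle relation becomes the perfect square $(2\zeta - a_1)^2 \equiv 0 \pmod{z}$, which together with $z^2 = 0$ controls all the pushforwards $\pi_*\gamma_*((2\zeta-a_1)\zeta^j z)$. The only difference is presentational: the paper packages this as the recursion $2h_{j+1} = a_1 h_j$ with $h_0 = 2$, whereas you expand $\zeta^k$ in the nilpotent variable $\eta = \zeta - a_1/2$ and read off the closed form directly; both are valid and essentially equivalent.
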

\begin{proof}
Let $h_j = \pi_*(\gamma_*((2\zeta - a_1)\zeta^j) \cdot z)$. Recall that $z^2 = -c_2 = 0$. Hence, by \eqref{last}, we have $\kappa_{0,j} = (g - 1)h_j$. By the projective bundle formula for $\pp \E^\vee$ over $\P$ we have
\begin{equation} \label{pbf} \zeta^2 - (a_1 + a_1'z)\zeta + (a_2 + a_2'z) = 0 \in A^*(\pp \E^\vee).
\end{equation}
Above, the classes $a_1, a_2, a_2'$ are pulled back from $\J^d_{2,g}$.
Substituting $a_2 = \frac{1}{4}a_1^2$, multiplying by $4$ and completing the square, we find that 
\begin{equation} \label{4} (2\zeta - a_1)^2 = (4a_1'\zeta - 4a_2')z \in \langle z \rangle.
\end{equation}
Thus, again using that $z^2 = 0$, we have
\begin{align*} 0 = \pi_*(\gamma_*((2\zeta - a_1)(2\zeta - a_1)\zeta^j) \cdot z) = 2h_{j+1} - a_1 h_j.
\end{align*}
It is easy to see that $h_0 = 2$. It then follows by induction that $h_j = \frac{1}{2^{j-1}} a_1^j$, which yields the claimed formula.
\end{proof}

A similar approach allows us to recursively determine $\kappa_{-1,j}$.
\begin{lem} \label{km1j}
For all $j \geq 1$, we have
\begin{align} 
\kappa_{-1,j} &= \frac{1}{2^{j-1}}a_1^{j-2}(jda_1 + (j^2-j)((d - g -1)a_1 -2a_2') ) \label{as} \\
&= \frac{1}{(2g-2)^{j-1}} \cdot(\kappa_{0,1})^{j-2}\cdot \left( (g-1)(j^2 - j) \cdot \kappa_{-1,2} - d(j^2 - 2j) \cdot \kappa_{0,1}\right) \label{ks}
\end{align}
\end{lem}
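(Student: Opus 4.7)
My starting point is formula \eqref{last} in the case $i = -1$, which together with the Chow ring relations $b_1 = 0$, $c_2 = 0$, and $a_2 = a_1^2/4$ on $\J^d_{2,g}$ (from Lemmas \ref{oddfinal} and \ref{even}) gives
\[\kappa_{-1,j} = \pi_*\gamma_*\bigl((2\zeta - a_1)\zeta^j\bigr) + (2g+2-d)Q_j, \qquad Q_j := \pi_*\bigl(\gamma_*(\zeta^j)\cdot z\bigr).\]
The class $Q_j$ is essentially computed in the proof of Lemma \ref{k0j}: combining the identity \eqref{4} with $z^2 = 0$ yields the two-term recursion $Q_{j+2} = a_1 Q_{j+1} - \tfrac{1}{4}a_1^2 Q_j$ with $Q_0 = 0$, $Q_1 = 1$, whose characteristic polynomial $(x - a_1/2)^2$ has a double root, forcing $Q_j = j\,a_1^{j-1}/2^{j-1}$.

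The key simplifying idea for $P_j := \pi_*\gamma_*(\zeta^j)$ is to set $\eta := \zeta - a_1/2$. Rewriting \eqref{4} as $\eta^2 = (a_1'\zeta - a_2')z$ and using $z^2 = 0$, one immediately sees $\eta^4 = (a_1'\zeta - a_2')^2 z^2 = 0$, so $\eta^k = 0$ for every $k \geq 4$. Expanding binomially
\[\zeta^j = \sum_{k=0}^{j} \binom{j}{k}\Bigl(\frac{a_1}{2}\Bigr)^{j-k}\eta^k,\]
only the four terms with $k \in \{0,1,2,3\}$ survive after pushforward. A short direct calculation using $\gamma_*(1) = 0$, $\gamma_*(\zeta) = 1$, and the identity $\eta^3 = (a_1 a_1'/2 - a_2')\eta z$ (obtained from the expression for $\eta^2$) gives
\[\pi_*\gamma_*(\eta^0) = \pi_*\gamma_*(\eta) = 0, \qquad \pi_*\gamma_*(\eta^2) = a_1', \qquad \pi_*\gamma_*(\eta^3) = \tfrac{1}{2}a_1 a_1' - a_2'.\]

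Assembling $\kappa_{-1,j} = 2P_{j+1} - a_1 P_j + (2g+2-d)Q_j$ from these ingredients, substituting $a_1' = d - g - 1$, and collecting terms produces the closed formula \eqref{as}. The second form \eqref{ks} then follows purely algebraically: by Lemma \ref{k0j} one has $a_1 = \kappa_{0,1}/(g-1)$, while setting $j = 2$ in \eqref{as} gives $2a_2' = (2d-g-1)a_1 - \kappa_{-1,2}$; substituting these into \eqref{as} and regrouping the $a_1$ and $a_2'$ contributions yields the expression in $\kappa_{0,1}$ and $\kappa_{-1,2}$. The main obstacle is really just the bookkeeping in the coefficient collection between \eqref{as} and \eqref{ks}; the conceptual content—the nilpotency $\eta^{\geq 4} = 0$—cuts an otherwise infinite expansion down to four terms and makes the closed form essentially inevitable.
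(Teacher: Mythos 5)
Your argument is correct, and I verified the assembly: with $T_k := \pi_*\gamma_*(\eta^k)$ one gets $2P_{j+1} - a_1P_j = \sum_k \binom{j}{k-1}a_1(a_1/2)^{j-k}T_k$, whose $k=2,3$ terms together with $(2g+2-d)Q_j$ collapse (using $2a_1' + (2g+2-d) = d$) to exactly \eqref{as}, and your passage to \eqref{ks} matches the paper's. The inputs are the same as in the paper's proof --- formula \eqref{last}, the relations $b_1 = c_2 = 0$ and $a_2 = \tfrac14 a_1^2$, and the key identity \eqref{4} --- but the way you extract the closed form is genuinely different. The paper turns \eqref{4} into linear recursions ($2f_{j+1} - a_1 f_j = h_{j+1}$ and $2\ell_j - a_1\ell_{j-1} = 4a_1'f_j - 4a_2'f_{j-1}$), combines them into a recursion for $2\kappa_{-1,j+1} - a_1\kappa_{-1,j}$, and verifies the guessed closed form by induction from the base case $\kappa_{-1,1} = d$. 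You instead repackage \eqref{4} as the statement that $\eta = \zeta - a_1/2$ satisfies $\eta^2 \in \langle z\rangle$, hence $\eta^4 = 0$, so the binomial expansion of $\zeta^j = (\eta + a_1/2)^j$ has only four surviving terms and the closed formula falls out directly with no induction and no need to guess the answer in advance. This is a cleaner derivation of \eqref{as}; what it gives up is the paper's explicit recursion $2\kappa_{-1,j+1} - a_1\kappa_{-1,j} = \cdots$, which is not needed elsewhere, so nothing is lost. Two minor points of rigor, neither a gap: the ``characteristic polynomial with a double root'' language for $Q_j$ is informal over a ring where $a_1$ is not invertible, but the two-term recursion with $Q_0 = 0$, $Q_1 = 1$ determines $Q_j$ uniquely and your formula visibly satisfies it; and your identity $\eta^3 = (a_1a_1'/2 - a_2')\eta z$ silently uses $z\eta^2 = 0$ (which holds because $\eta^2$ already carries a factor of $z$ and $z^2 = 0$), which you should state.
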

\begin{proof} 
By \eqref{last}, we have
\[\kappa_{-1,j} = (2g + 2 - d)\pi_*\gamma_*(z \cdot \zeta^j) + \pi_*\gamma_*((2\zeta - a_1)\zeta^j).\]
Let $f_j = \pi_*\gamma_*(\zeta^{j+1} \cdot z)$ and let $\ell_j = \pi_*\gamma_*((2\zeta - a_1)\zeta^{j+1})$, so that 
\begin{equation} \label{sothat} \kappa_{-1,j} = (2g +2 - d)f_{j-1} + \ell_{j-1}.
\end{equation}
Note that $f_0 = 1$.
Using notation from the previous lemma, we have
\[\frac{1}{2^{j-1}} a_1^j = h_j = 2 f_j - a_1 f_{j-1}.\]
This formula recursively determines $f_j$; one readily checks that the recursion is solved by
\begin{equation} \label{fj} f_j = \frac{j+1}{2^{j}}a_1^j.
\end{equation}
Meanwhile, using \eqref{4}, we have
\[(4a_1'\zeta - 4a_2')z \zeta^{j} = (2\zeta - a_1)^2\zeta^{j} = 2(2\zeta - a_1)\zeta^{j+1} - a_1(2\zeta - a_1)\zeta^{j}.\]
Thus, applying $\pi_*\gamma_*$, we find
\begin{equation} \label{diff} 4a_1' f_{j} - 4a_2' f_{j-1} = 2\ell_{j} - a_1 \ell_{j-1}. 
\end{equation}
Hence, applying \eqref{sothat} followed by \eqref{fj} and \eqref{diff}, we obtain the following recursion:
\begin{align*} 2\kappa_{-1,j+1} - a_1 \kappa_{-1,j} &= (2g + 2 - d)(2f_j - a_1f_{j-1}) + 2\ell_j - a_1\ell_{j-1} \\
&=(2g + 2 - d)\cdot \frac{1}{2^{j-1}}a_1^j + 4a_1' \cdot \frac{j+1}{2^j} a_1^j - 4a_2' \cdot \frac{j}{2^{j-1}}a_1^{j-1}. \\
\intertext{Recalling that $a_1' = d - g - 1$ and collecting terms, we simplify to}
&= \frac{1}{2^{j-1}}a_1^{j-1}\left((2g + 2 - d + 2(d - g - 1)(j+1))a_1 - 4j a_2' \right) \\
&= \frac{1}{2^{j-1}}a_1^{j-1}\left(da_1 + 2j((d - g - 1)a_1 - 2a_2') \right).
\end{align*}
This recursively determines all $\kappa_{-1,j}$.
For the base case, we compute directly from \eqref{last} and \eqref{pbf} that 
\[\kappa_{-1,1} = 2g + 2 - d + 2\pi_*\gamma_*\zeta^2 = 2g + 2 - d + 2a_1' = d.\]
The claimed formula in \eqref{as} is then verified using induction.

Setting $j = 2$ in \eqref{as} we find
\[\kappa_{-1,2} = (2d - g - 1)a_1 - 2a_2'.\]
By Lemma \ref{k0j}, we have $\kappa_{0,1}= (g-1)a_1$. Therefore, we have
\begin{align*} (j^2 - j)\kappa_{-1,2} - \frac{d(j^2 - 2j)}{(g-1)} \cdot \kappa_{0,1} &= (j^2 - j)(d + (d - g - 1)a_1 - 2a_2') - d(j^2 - 2j)a_1 \\
&=jda_1 + (j^2 - j)((d - g - 1)a_1 - 2a_2').
\end{align*}
Collecting the factors of $\frac{1}{g - 1}$ now proves the form in
\eqref{ks}.
\end{proof}

\subsection{Proofs of Theorem \ref{intro} and Corollary \ref{relscor}}

By Lemmas \ref{k0j} and \ref{km1j}, we have
\[\kappa_{0,1} = (g - 1)a_1 \qquad \text{and} \qquad \kappa_{-1,2} = (2d - g - 1)a_1 - 2a_2'.\]
Thus, we can solve
\[a_1 = \frac{1}{(g - 1)}\kappa_{0,1} \qquad \text{and} \qquad a_2' = \frac{1}{2} \left(\frac{2d - g - 1}{g - 1} \cdot \kappa_{0,1} - \kappa_{-1,2}\right).\]
In particular,
\[(d - g - 1)a_1 - 2a_2' 
= \kappa_{-1,2} - \frac{d}{g - 1}\kappa_{0,1}.\]
Thus, when written in terms of the generators $\kappa_{0,1}$ and $\kappa_{-1,2}$, the presentations of $A^*(\J^d_{2,g})$ in Lemmas \ref{oddfinal}
 and \ref{even} become the ring presented in Theorem \ref{intro}.

As for Corollary \ref{relscor}, the
first three families of relations in the statement follow from Lemmas \ref{kvan}, \ref{k0j}, and \ref{km1j} respectively.
The last relation and the fact that there are no more follow from Theorem \ref{intro}.

\subsection{The theta divisor and splitting loci} \label{theta}
Recall that when $d = g - 1$, there is a canonical divisor called the \emph{theta divisor} on $\sJ^{g-1}_g$ corresponding to those line bundles that have a non-zero global section. By Riemann--Roch, since $d = g - 1$, having a global section is the same as having non-zero $H^1$. Thus, by cohomology and base change, we can describe the theta divisor as the support of the sheaf $R^1f_* \sL$.
A standard calculation with the Grothendieck--Riemann--Roch formula expresses the class of the theta divisor in terms of the twisted kappa classes.

\begin{lem} \label{tl}
We have $\Theta = \frac{1}{2}\kappa_{0,1} - \frac{1}{2} \kappa_{-1,2} - \frac{1}{12}\kappa_{1,0} \in A^1(\sJ^{g-1}_g)$.
\end{lem}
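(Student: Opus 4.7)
The plan is to identify $\Theta$ with (minus) the first Chern class of the derived pushforward $Rf_*\sL$ and then expand via Grothendieck--Riemann--Roch. When $d = g - 1$ we have $\chi(L) = 0$, so both $f_*\sL$ and $R^1 f_* \sL$ vanish on the complement of $\Theta$ and the locus where they fail to vanish is precisely $\Theta$. A standard determinant-of-cohomology argument then gives
\[
\Theta \;=\; c_1(R^1f_*\sL) - c_1(f_*\sL) \;=\; -\,c_1(Rf_*\sL)
\]
in $A^1(\sJ^{g-1}_g)$, so the problem reduces to computing $c_1(Rf_*\sL)$.

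Next I would apply GRR to the map $f:\sC \to \sJ^{g-1}_g$. Writing $\ell := c_1(\sL)$ and $\omega := c_1(\omega_f)$, and using that $f$ is a smooth family of curves with relative tangent bundle $\omega_f^\vee$, the formula reads
\[
\ch(Rf_*\sL) \;=\; f_*\!\left(\ch(\sL)\cdot \td(\omega_f^\vee)\right)
\;=\; f_*\!\left(\bigl(1+\ell+\tfrac{\ell^2}{2}+\cdots\bigr)\bigl(1-\tfrac{\omega}{2}+\tfrac{\omega^2}{12}+\cdots\bigr)\right).
\]
Only the codimension $1$ piece of the left-hand side is needed, i.e.\ the $f_*$ of the codimension $2$ piece of the integrand. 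That piece is $\tfrac{1}{2}\ell^2 - \tfrac{1}{2}\ell\,\omega + \tfrac{1}{12}\omega^2$, and from the definition $\kappa_{i,j}=f_*(\omega^{i+1}\ell^j)$ one has $f_*(\ell^2)=\kappa_{-1,2}$, $f_*(\ell\omega)=\kappa_{0,1}$, and $f_*(\omega^2)=\kappa_{1,0}$.

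Combining the two steps,
\[
c_1(Rf_*\sL) \;=\; \tfrac{1}{2}\kappa_{-1,2} - \tfrac{1}{2}\kappa_{0,1} + \tfrac{1}{12}\kappa_{1,0},
\]
and negating gives the stated formula for $\Theta$. There is essentially no obstacle here: the only subtlety is the clean identification $\Theta = -c_1(Rf_*\sL)$ in the case $\chi=0$, which I would justify by noting that $Rf_*\sL$ is represented by a two-term complex of locally free sheaves of equal rank whose degeneracy locus with its natural scheme structure is exactly $\Theta$; the remaining computation is a bookkeeping exercise with the Todd and Chern characters.
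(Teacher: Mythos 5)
Your proposal is correct and follows essentially the same route as the paper: identify $\Theta$ with $c_1(R^1f_*\sL) = -c_1(Rf_*\sL)$ (the paper notes $f_*\sL = 0$ and takes $\Theta$ to be the support of $R^1f_*\sL$; your determinant-of-cohomology phrasing is an equivalent justification) and then apply Grothendieck--Riemann--Roch, reading off the degree-one term as $\tfrac{1}{2}\kappa_{-1,2} - \tfrac{1}{2}\kappa_{0,1} + \tfrac{1}{12}\kappa_{1,0}$ before negating. The signs and coefficients all match the paper's computation.
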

\begin{proof}
When $d = g - 1$, a general line bundle has no global sections, so $f_*\sL = 0$. The theta divisor is the support of $R^1f_*\sL$, so has fundamental class $c_1(R^1f_*\sL)$.
By Grothendieck--Riemann--Roch, we have
\begin{align*} -c_1(R^1f_*\sL) &= \left[ f_*(\mathrm{Ch}(\sL) \mathrm{Td}(\omega_f^\vee)) \right]_1 \\
&= \left[f_* \left(1 + c_1(\sL) + \frac{c_1(\sL)^2}{2} + \ldots \right) \left(1 - \frac{c_1(\omega_f)}{2} + \frac{c_1(\omega_f)^2}{12} + \ldots \right)\right]_1 \\
&= f_*\left( \frac{c_1(\omega_f)^2}{12} - \frac{c_1(\sL) c_1(\omega_f)}{2} + \frac{c_1(\sL)^2}{2} \right) = \frac{1}{12} \kappa_{1,0} - \frac{1}{2}\kappa_{0,1} + \frac{1}{2}\kappa_{-1,2},
\end{align*}
from which the result follows.
\end{proof}
The formula given in Lemma \ref{tl} holds on the whole universal Picard stack. When we restrict to the hyperelliptic locus $\sJ^{g-1}_{2,g} \subset \sJ^{g-1}_g$ we have $\kappa_{1,0} = 0$ and so $\Theta$ is a multiple of $\kappa_{0,1} - \kappa_{-1,2}$. This is the class whose $(g+1)$st power generates the relations in Theorem \ref{intro}.

We also give a direct conceptual argument in terms of splitting loci.
When $d = g - 1$, the general line bundle has splitting type $(-1, -1)$, so the theta divisor is the same thing as the  splitting locus for splitting type $(-2, 0)$. We saw in Corollary \ref{specialize} that, once we set $c_2 = 0$ and $a_2 = \frac{1}{4}a_1^2$, 
the universal formulas for splitting loci become perfect powers of the class of the codimension $1$ splitting locus.
Thus, the class of the codimension $g+1$ splitting locus must be a multiple of the $(g+1)$st power of the class of theta.

\subsection{The Chow ring of the rigidification} \label{rigid}
In this last section, we study the relationship between the Chow ring of the stack $\sJ^d_{2,g}$ and of its rigidification $J^d_{2,g}$. 
The former is a $\gg_m$-gerbe over the latter. 

We first recall some general facts about $\gg_m$-gerbes. Suppose $\mathscr{X} \to X$ is a $\gg_m$-gerbe. 
There is a restriction map $\mathrm{res}: \Pic(\mathscr{X}) \to \Pic(\mathrm{B} \gg_m) = \zz$ that restricts a line bundle to a fiber of $\mathscr{X} \to X$ (as in \eqref{MVeq}). A line bundle $\mathscr{M}$ on $\mathscr{X}$ is called
$\emph{$n$-twisted}$ if $\mathrm{res}(\mathscr{M}) = n$ (see e.g. \cite[Chapter 12.3]{Olsson}).
We claim that if $\mathscr{M}$ is an $n$-twisted line bundle, then the total space of $\mathscr{M}$ minus the zero section is a $\mu_n$-gerbe over $X$. Indeed, the $\gg_m$ stabilizer at a point on $\mathscr{X}$ acts transitively on the fiber of $\mathscr{M} \smallsetminus 0$ with stabilizer $\mu_n$. Alternatively, let $\mathscr{X}_n \to X$ represent $n$ times the class of $\mathscr{X} \to X$. There is a natural map $\mathscr{X} \to \mathscr{X}_n$. Fiberwise over $X$ this map looks like the map $\mathrm{B}\gg_m \to \mathrm{B}\gg_m$ induced by $t \mapsto t^n$. Hence, $\mathscr{X} \to \mathscr{X}_n$ is a $\mu_n$-gerbe and pullback along it induces an equivalence of $1$-twisted line bundles on $\mathscr{X}_n$ with $n$-twisted line bundles on $\mathscr{X}$.
By \cite[Lemma 3.1.1.8]{lib}, $\mathscr{X}$ posesses an $n$-twisted line bundle if and only if $\mathscr{X}_n$ is trivial. In this case, we have the following diagram
\begin{center}
\begin{tikzcd}
\mathscr{M} \arrow{d} \smallsetminus 0 \arrow{r} & X \times \mathrm{pt} \arrow{d} \arrow{r} & \mathrm{pt} \arrow{d} \\
\mathscr{X} \arrow{dr} \arrow{r} & \mathscr{X}_n \cong X \times \mathrm{B} \gg_m \arrow{d} \arrow{r} & \mathrm{B} \gg_m \\
& X
\end{tikzcd}
\end{center}
 where the squares are fibered squares. This shows that $\mathscr{M} \smallsetminus 0 \to X$ is a $\mu_n$-gerbe.

\begin{lem} \label{gc}
We have $A^*(J^d_{2,g}) \cong \qq[u]/(u^{g+1})$, and the pullback map $A^*(J^d_{2,g}) \to A^*(\sJ^d_{2,g})$ is the inclusion defined by $u \mapsto d \kappa_{0,1} - (g - 1)\kappa_{-1,2}$. 
\end{lem}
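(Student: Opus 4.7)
The strategy is to apply the $\mu_n$-gerbe construction from the preamble to a line bundle of nonzero residue on $\sJ^d_{2,g}$. For concreteness, take $\mathscr{M} = \mathscr{A}$; by Section~\ref{clb}, $c_1(\mathscr{A})$ is a positive integer multiple of $a_1$ and $\mathrm{res}(\mathscr{A}) \in \{2,4\}$ is nonzero. Then $\mathscr{A} \smallsetminus 0 \to J^d_{2,g}$ is a $\mu_n$-gerbe, so rational Chow of the source agrees with that of the target: $A^*(\mathscr{A} \smallsetminus 0) \cong A^*(J^d_{2,g})$. On the other hand, viewing $\mathscr{A}$ as a rank-$1$ vector bundle on $\sJ^d_{2,g}$, excision gives $A^*(\mathscr{A} \smallsetminus 0) = A^*(\sJ^d_{2,g})/(c_1(\mathscr{A}))$. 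Combining these yields $A^*(J^d_{2,g}) \cong A^*(\sJ^d_{2,g})/(c_1(\mathscr{A}))$ as rings.

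Since $c_1(\mathscr{A})$ is a nonzero multiple of $a_1 = \kappa_{0,1}/(g-1)$, substituting into the presentation $A^*(\sJ^d_{2,g}) = \qq[\kappa_{0,1}, \kappa_{-1,2}]/\langle (d\kappa_{0,1} - (g-1)\kappa_{-1,2})^{g+1} \rangle$ from Theorem~\ref{intro} kills $\kappa_{0,1}$ and gives $\qq[\kappa_{-1,2}]/(\kappa_{-1,2}^{g+1})$. Rescaling the generator by $-(g-1)$ rewrites this as $\qq[u]/(u^{g+1})$, matching the abstract ring structure in the statement.

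To promote this quotient description to the subring inclusion $\pi^*: A^*(J^d_{2,g}) \hookrightarrow A^*(\sJ^d_{2,g})$, I will show that $u := d\kappa_{0,1} - (g-1)\kappa_{-1,2}$ itself has vanishing residue, so that it descends. Using Lemmas~\ref{k0j} and~\ref{km1j} together with the residue computations of Section~\ref{clb}, one extracts $\mathrm{res}(a_1) = 2$ from $\mathscr{A}$ and $\mathrm{res}(a_2') = d - g - 1$ from $\Lambda(0,1)$, whence $\mathrm{res}(\kappa_{0,1}) = 2(g-1)$, $\mathrm{res}(\kappa_{-1,2}) = 2d$, and $\mathrm{res}(u) = 2d(g-1) - 2d(g-1) = 0$. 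Consequently there is a unique $u_J \in A^1(J^d_{2,g})$ with $\pi^*(u_J) = u$; since $u \neq 0$ and $A^1(J^d_{2,g})$ is one-dimensional, $u_J$ generates it, and $u_J^{g+1} = 0$ follows from injectivity of $\pi^*$. The compatibility between the quotient and subring descriptions is automatic because the $\mu_n$-gerbe map $\mathscr{A}\smallsetminus 0 \to J^d_{2,g}$ factors through $\sJ^d_{2,g}$, so $\pi^*$ is a ring-theoretic section of the projection modulo $(c_1(\mathscr{A}))$; the main point requiring care is just this compatibility.
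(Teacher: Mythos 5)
Your proposal is correct, and its core is the same as the paper's: both pass to the $\mu_n$-gerbe $\mathscr{A} \smallsetminus 0 \to J^d_{2,g}$, identify $A^*(J^d_{2,g}) \cong A^*(\sJ^d_{2,g})/\langle c_1(\mathscr{A})\rangle = A^*(\sJ^d_{2,g})/\langle a_1\rangle$ by excision, and read off $\qq[u]/(u^{g+1})$ from the presentation. Where you genuinely diverge is in pinning down the image of the pullback in codimension $1$. The paper argues that, up to scale, $d\kappa_{0,1}-(g-1)\kappa_{-1,2}$ is the \emph{unique} linear form whose $(g+1)$st power vanishes in $\qq[\kappa_{0,1},\kappa_{-1,2}]/\langle(d\kappa_{0,1}-(g-1)\kappa_{-1,2})^{g+1}\rangle$ (by unique factorization in the polynomial ring), and the image of $A^1(J^d_{2,g})$ must be spanned by such a class. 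You instead compute residues --- $\mathrm{res}(a_1)=2$ from $\mathscr{A}$, $\mathrm{res}(a_2')=d-g-1$ from $\Lambda(0,1)$, hence $\mathrm{res}(d\kappa_{0,1}-(g-1)\kappa_{-1,2})=0$ --- and use the exact sequence \eqref{MVeq} to conclude that this class descends. Both arguments are valid; yours is more explicit and does not rely on the uniqueness of the nilpotent direction, at the small cost of having to extend $\mathrm{res}$ $\qq$-linearly to $A^1$ (note that $a_1$ itself need not be the class of a line bundle on $\sJ^d_{2,g}$ --- by Lemma \ref{pin} only $2a_1$ is guaranteed integral for some parities --- but this is harmless with rational coefficients). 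Your residue bookkeeping checks out against \eqref{AB} and \eqref{c1Lam}, and the final appeal to injectivity of $\pi^*$, which follows from the composition $A^*(J^d_{2,g}) \to A^*(\sJ^d_{2,g}) \to A^*(\mathscr{A}\smallsetminus 0)$ being an isomorphism, is exactly the paper's mechanism.
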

\begin{proof}
Let $\mathscr{A}$ be the line bundle defined in Section \ref{clb}. By \eqref{AB}, we have that $\mathscr{A}$ is an $n$-twisted line bundle for $n = 4$ or $2$ depending on the parity of $d - g$. In particular, $\mathscr{A} \smallsetminus 0 \to J^d_{2,g}$ is a $\mu_n$-gerbe, so 
we have an isomorphism of rational Chow rings $A^*(J^d_{2,g}) \cong A^*(\mathscr{A} \smallsetminus 0)$. The latter is isomorphic to $A^*(\sJ^d_{2,g})/\langle c_1(\mathscr{A})\rangle = A^*(\sJ^d_{2,g})/\langle a_1 \rangle$.   
The first claim now follows from Lemmas \ref{oddfinal} and \ref{even}.

For the second claim, note that the composition
$A^*(J^d_{2,g}) \to A^*(\sJ^d_{2,g}) \to A^*(\mathscr{A} \smallsetminus 0)$ is an isomorphism on rational Chow rings. Hence, the first map must be an injection. Up to scale, there is a unique codimension $1$ class whose $(g+1)$st power vanishes; thus, up to rescaling the generator $u$, the inclusion $A^*(J^d_{2,g}) \to A^*(\sJ^d_{2,g})$ is defined by $u \mapsto d \kappa_{0,1} - (g - 1)\kappa_{-1,2}$. 
\end{proof}

\begin{rem}
When $d = 0$, an argument using the Beauville--Deninger--Murre weight decomposition for Chow groups of abelian schemes shows that $(\kappa_{-1,2})^{g+1} = 0 \in A^*(J^0_g)$. 
In this decomposition, the weight $w$ classes are those such that pullback along the multiplication by $n$ map $J^0_g \to J^0_g$ acts by $n^w$ for all integers $n$. One computes that 
$\kappa_{i,j}$ has weight $j$ and weights are multiplicative. By \cite[Theorem 2.19]{DM}, all classes of weight larger than $2g$ vanish. See also \cite[Proposition 4.2]{BMSY}.
\end{rem}

The codimension $1$ class $d \kappa_{0,1} - (g - 1)\kappa_{-1,2}$ also arises in the results \cite{MeloViviani,EW} on the Picard groups of $J^d_g$ and $\sJ^d_g$ over all of $\sM_g$. Let $\lambda$ denote the pullback of the first Chern class of the Hodge bundle on $\sM_g$.
There, it was shown that $\Pic(J^d_g) \to \Pic(\sJ^d_g)$ is the subgroup generated by $\lambda$ and $\frac{1}{\gcd(2g - 2, g+d - 1)}(d \kappa_{0,1} - (g - 1)\kappa_{-1,2})$. (We note that \cite{EW} defines $\kappa_{i,j}$ using powers of the relative tangent bundle instead of the relative cotangent bundle, so their $\kappa_{i,j}$ differs from ours by $(-1)^i$.)
There is a commutative diagram
\begin{equation}
\begin{tikzcd}
\sJ^d_{2,g} \arrow{d} \arrow{r} & \sJ^d_{g} \arrow{d} \\
 J^d_{2,g} \arrow{r} & J^d_g
\end{tikzcd}
\end{equation}
where the horizontal arrows are closed embeddings and the vertical arrows are $\gg_m$-gerbes. It follows that the pullback on rigidifications $A^*(J^d_g) \to A^*(J^d_{2,g})$ is also surjective.

\bibliographystyle{amsplain}
\bibliography{refs}

\end{document}